\numberwithin{equation}{section}
\newtheorem{theorem}{Theorem}[section]
\newtheorem{lemma}[theorem]{Lemma}
\newtheorem{corollary}[theorem]{Corollary}
\newtheorem{remark}[theorem]{Remark}
\newcommand\E{{\mathbb E}}
\renewcommand\Pr{{\mathbb P}}
\newcommand\dto{\overset{\mathrm{d}}{\to}}
\newcommand{\cD}{{\mathcal D}}
\newcommand{\cE}{{\mathcal E}}
\newcommand{\cL}{{\mathcal L}}
\newcommand{\cP}{{\mathcal P}}
\newcommand\NN{\mathbb{N}}
\newcommand{\RR}{{\mathbb R}}
\newcommand{\pc}{p_c}
\newcommand{\dist}{\mathrm{d}}
\newcommand\floor[1]{\lfloor #1 \rfloor}
\newcommand\ceil[1]{\lceil #1 \rceil}
\renewcommand\le{\leqslant}
\renewcommand\ge{\geqslant}
\newcommand{\indic}[1]{\mathbbm{1}_{{\{{#1}\}}}}
\DeclareMathOperator*{\argmax}{argmax}
\newcommand{\goodchi}{\protect\raisebox{2pt}{$\chi$}}
\renewcommand{\epsilon}{\varepsilon}
\newcommand{\eps}{\varepsilon}
\long\def\symbolfootnote[#1]#2{\begingroup
\def\thefootnote{\fnsymbol{footnote}}\footnote[#1]{#2}\endgroup}
\newenvironment{romenumerate}[1][-10pt]{
\addtolength{\leftmargini}{#1}\begin{enumerate}
 }{\end{enumerate}}
\newcounter{thmenumerate}
\newenvironment{thmenumerate}
{\setcounter{thmenumerate}{0}%
 \def\item{\par
 \refstepcounter{thmenumerate}\textup{(\roman{thmenumerate})\enspace}}
}
{}
\newcommand{\dx}{\mathrm d}
\newcommand\set[1]{\ensuremath{\{#1\}}}
\newcommand\xpar[1]{(#1)}
\newcommand\bigpar[1]{\bigl(#1\bigr)}
\newcommand\Bigpar[1]{\Bigl(#1\Bigr)}
\newcommand\biggpar[1]{\biggl(#1\biggr)}
\newcommand\lrpar[1]{\left(#1\right)}
\newcommand\tooo{\to\infty}
\newcommand\dd{\,\dx}
\newcommand\gl{\lambda}
\newcommand\gd{\delta}
\newcommand\Bex{\mathcal{B}_{\mathrm{ex}}}
\newcommand\Enp{\E_{n,p}}
\newcommand\notarrow{\not\leftrightarrow}
\newcommand\DD{\operatorname{D_{n,p}}} 
\newcommand\gLL{\Lambda^{(\lambda)}}
\newcommand{\Holder}{H\"older}
\newcommand\bp{{\mathfrak{X}}}
\newcounter{case}
\newcommand\pfcase[1]{\refstepcounter{case}\smallskip\noindent\emph{Case
	\arabic{case}: #1} \noindent}
\newcommand\pfitemx[1]{\par#1:}
\newcommand\pfitemref[1]{\pfitemx{\ref{#1}}}
\newcommand\Bin{\mathrm{Bin}}
\begin{document}
\title{On the critical probability in percolation}
\author{Svante Janson%
\thanks{Department of Mathematics, Uppsala University, 
PO Box 480, SE-751~06 Uppsala, Sweden. 
E-mail: {\tt svante.janson@math.uu.se}. 
Part of the work was done during visits to the University of Cambridge and
to the
Isaac Newton Institute for Mathematical Sciences
during the programme 
Theoretical Foundations for Statistical Network Analysis
(EPSCR Grant Number EP/K032208/1)
and was partially supported by a grant from 
the Knut and Alice Wallenberg Foundation
and a grant from
the Simons foundation.}
\ and Lutz Warnke%
\thanks{Department of Pure Mathematics and Mathematical Statistics,
University of Cambridge, Wilberforce Road, Cambridge CB3\thinspace0WB, UK; 
{\em and\/}
School of Mathematics, Georgia Institute of Technology, Atlanta GA~30332, USA. 
E-mail: {\tt L.Warnke@dpmms.cam.ac.uk}. 
Part of the work was done while interning with the Theory Group at 
Microsoft Research, Redmond; LW thanks Yuval Peres for introducing him to the topic of this paper.}} 
\date{November 25, 2016}

\maketitle

\begin{abstract}
For percolation on finite transitive graphs, Nachmias and Peres suggested a 
characterization of the critical probability based on the logarithmic 
derivative of the susceptibility. 
As a first test-case, we study their suggestion for the Erd{\H{o}}s--R{\'e}nyi 
random graph~$G_{n,p}$, and confirm that the logarithmic derivative has the 
desired properties: 
(i)~its maximizer lies inside the critical window~$p=1/n+\Theta(n^{-4/3})$, and 
(ii)~the inverse of its maximum value coincides with the $\Theta(n^{-4/3})$--width 
of the critical window. 
We also prove that the maximizer is not located at $p=1/n$ or $p=1/(n-1)$, 
refuting a speculation of~Peres. 
\end{abstract}

\section{Introduction}
The percolation phase transition on finite graphs is one of the most 
intriguing and striking phenomena at the intersection of mathematical 
physics, combinatorics, and probability theory. 
The classical Erd{\H{o}}s--R{\'e}nyi random graph~$G_{n,p}$ is perhaps 
the most carefully studied reference model: 
as the edge probability~$p$ increases past the `critical probability' 
$\pc=1/n$, the global structure changes radically, from only small 
components to a single giant component plus small ones. 
More precisely, using the parametrization $p=1/n + \lambda_n n^{-4/3}$,
and for simplicity assuming $p =\Theta(1/n)$,  
by the inspiring work 
of Erd{\H{o}}s and R{\'e}nyi~\cite{ER1960}, Bollob{\'a}s~\cite{Bollobas1984}, 
{\L}uczak~\cite{Luczak1990}, and Aldous~\cite{Aldous1997},   
we nowadays distinguish three qualitatively different phases of~$G_{n,p}$. 
In the subcritical phase $\lambda_n \to -\infty$, the $r=\Theta(1)$ largest
components $C_1, \ldots, C_r$ are typically all of comparable size: 
$|C_1| \sim |C_2| \sim \cdots \sim |C_r| = \Theta(n^{2/3}\lambda_n^{-2}\log
|\lambda_n|) = o(n^{2/3})$. 
In the supercritical phase $\lambda_n \to \infty$, 
the largest component typically dominates all other components:  
$|C_2| \ll |C_1| = \Theta(\lambda_n n^{2/3})$. 
In the critical window $|\lambda_n| = O(1)$, the rescaled sizes $|C_1|/n^{2/3},
|C_2|/n^{2/3}, \cdots$ of the largest components converge in distribution to 
 non-degenerate random variables, i.e., they are not concentrated.

In the language of mathematical physics, $G_{n,p}$ interpreted as 
percolation on the complete $n$-vertex graph is a mean-field model.  
Hence, we expect that the percolation phase transition of 
many `high dimensional' finite graphs is similar, with the hypercube 
and various tori being examples of great interest 
(see, e.g.,~\cite{AKS82,BKL92,vdHS06,BCHSS1,BCHSS2,BCHSS3,vdHN2012}). 
To fix notation, 
we assume that $G$ is a given transitive $n$-vertex graph, and
we write $G_p \subseteq G$ for the binomial random subgraph where 
each edge is included independently with probability~$p$. 
As pointed out by Nachmias and Peres~\cite{NP08}, in this general 
percolation setting it is a challenging problem to find a good 
definition of the critical probability~$p_c$,
such that for a suitable critical window around $p_c$,
for example, the size of the largest component is not concentrated.

The folklore average degree heuristic $p_c=1/(\mathrm{deg}_G(v)-1)$ 
is a natural  first guess 
(the graph $G$ is assumed to be transitive and thus regular, 
so the choice of the vertex~$v$ does not matter).
For the  hypercube with vertex set $\{0,1\}^m$,
and thus degree $m$, 
Ajtai, Koml{\'o}s and Szemer{\'e}di \cite{AKS82}
showed that there is a critical threshold $(1+o(1))/m$; this was sharpened by
Bollob{\'a}s, Kohayakawa, and {\L}uczak~\cite{BKL92}, who raised the
question whether the critical probability might be exactly 
$1/(m-1)$.
However, Borgs, Chayes, van der Hofstad, Slade and
Spencer~\cite{BCHSS1,BCHSS2} and 
van der Hofstad and Nachmias~\cite{vdHN2012,vdHN2014}
have shown that there is 
critical window of width $\Theta(n^{-1/3}p_c)=\Theta(2^{-m/3}/m)$
about a critical probability $p_c$, which by 
van der Hofstad and Slade~\cite{vdHS06} 
satisfies $p_c = 1/(m-1) + 3.5 m^{-3} + O(m^{-4})$;
since the width of the window is $o(m^{-3})$, 
the value $1/(m-1)$ is outside the critical window.

A more sophisticated suggestion for the critical probability 
was pioneered by Borgs, Chayes, van der Hofstad, Slade and
Spencer~\cite{BCHSS1,BCHSS2} (and used for the hypercube result just described).
They essentially proposed to define~$\pc=\pc(G)$ as the unique solution 
to the polynomial equation
\begin{equation}\label{eq:pc:mf}
\goodchi_G(p) := \E_p |C(v)| = n^{1/3} ,
\end{equation}
where the susceptibility $\goodchi_G(p)$ denotes the expected size of the
component~$C(v)$ containing a fixed vertex~$v$ in~$G_p$. 
(This is a widely studied key parameter in percolation theory and random
graph theory, see, 
e.g., \cite{AizenmanNewman1984,Grimmett1999,JansonLuczak2008,LNNP2014,RWapsubcr,RWapbsr}. 
Since~$G$ is assumed to be transitive, the choice of~$v$ does not matter.)  
The aforementioned technical definition is guided by Erd{\H{o}}s--R{\'e}nyi 
mean-field type behaviour. Indeed, in the subcritical phase we expect that~$C(v)$ 
closely mimics a subcritical branching process, which suggests that 
typically $|C_1| \approx (\goodchi_G(p))^2$ up to logarithmic corrections 
(see, e.g., Section 1.2 in~\cite{vdHL10} or Proposition~5.1 in~\cite{AizenmanNewman1984}). 
Furthermore, in the supercritical phase we expect that the largest 
component dominates all other components, which by transitivity 
suggests that $\goodchi_G(p) \approx \E_p |C_1|^2/n$. Assuming 
that inside the critical window we can observe subcritical and 
supercritical features, it thus seems plausible that the critical 
probability should roughly satisfy 
$\goodchi_G(p) \approx \E_p |C_1|^2/n \approx \goodchi_G(p)^4/n$,  
motivating the choice of equation~\eqref{eq:pc:mf}.  
Borgs et al.~\cite{BCHSS1,BCHSS2} showed that (a minor variant of) the
discussed definition is very useful in combination with the so-called finite
triangle condition:  they recovered many
Erd{\H{o}}s--R{\'e}nyi features under such generic mean-field assumptions  
(see~\cite{vdHN2012,vdHN2014} for some more recent developments).

As pointed out by Peres~\cite{Peres2012}, the suggestion of 
Borgs et al.~\cite{BCHSS1,BCHSS2} builds the mean-field scaling 
$\Theta(n^{1/3})$ into the definition of the critical probability.  
It would be desirable to have a useful general definition that recovers this
scaling for $n$-vertex mean-field graphs $G=G_n$,
rather than having separate definitions for each different scaling 
behaviour (or, in mathematical physics jargon, for each `universality class').
With this aim in mind, Nachmias and Peres~\cite{NP08} 
suggested to define~$\pc=\pc(G)$ as the value of~$p$ which maximizes 
the logarithmic derivative 
\begin{equation}\label{eq:pc:ld}
\frac{\dx}{\dx p} \log \goodchi_G(p) = \frac{\frac{\dx}{\dx p}\E_p |C(v)|}{\E_p |C(v)|} .
\end{equation}
To motivate this definition, note that by the Margulis--Russo 
formula~\cite{Margulis,Russo} the derivative $\frac{\dx}{\dx p}\E_p |C(v)|$ 
intuitively counts the expected (weighted) number of edges of $G_p$ which 
can affect the size of~$|C(v)|$, see also Section~\ref{sec:maxder}. 
In other words, $\pc$ equals the probability where the addition of a 
random edge has maximum relative impact on the component size~$|C(v)|$. 
Denoting the maximum value of~\eqref{eq:pc:ld} by~$M=M(G)$, 
Warnke~\cite{Warnke2012} conjectured that for mean-field graphs~$G$
the width of the critical window is of order~$\Theta(1/M)$. 
This is motivated by the fact that 
$\log(\goodchi_G(p_2)/\goodchi_G(p_1)) = \int_{p_1}^{p_2} \frac{\dx}{\dx p} \log \goodchi_G(p) dp \le (p_2 - p_1) M$ 
entails that the susceptibility satisfies 
$\goodchi_G(p_2) = \Theta(\goodchi_G(p_1))$ for $p_2-p_1 = O(1/M)$.

\subsection{Main results}
In this paper we investigate, as a first test-case, the suggested 
definition of Nachmias and Peres~\cite{NP08} for the Erd{\H{o}}s--R{\'e}nyi 
random graph~$G_{n,p}$ i.e., the case $G=K_n$ (as proposed by Peres~\cite{Peres2012}). 
Here our first main result confirms that their definition of 
the critical probability~$\pc$ has the desired properties, i.e., that 
for $G_{n,p}=(K_n)_p$ the logarithmic derivative 
$\frac{\dx}{\dx p}\log \goodchi_{K_n}(p)$ satisfies the following:%
\begin{romenumerate}%
\item its maximizer lies inside the critical window $p=1/n + O(n^{-4/3})$, 
and  
\item the inverse of its maximum value coincides with the $\Theta(n^{-4/3})$--width	
of the critical window.  
\end{romenumerate}
\begin{theorem}[Maximizer of the logarithmic derivative for $G_{n,p}$]%
\label{thm:main}
We have 
\begin{gather}
\label{main:p}
\biggl|\argmax_{p \in (0,1) }\frac{\dx}{\dx p} \log \goodchi_{K_n}(p) - \frac{1}{n}\biggr| = O(n^{-4/3}), \\ 
\label{main:max}
\max_{p \in (0,1)} \frac{\dx}{\dx p} \log \goodchi_{K_n}(p) = \Theta(n^{4/3}) .
\end{gather}
\end{theorem}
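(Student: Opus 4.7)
The plan is to estimate the susceptibility $\chi := \goodchi_{K_n}(p) = \E_p|C(v)|$ and its derivative $\chi'$ sharply in three regimes of~$p$, and then locate the maximum of $\chi'/\chi$. Specifically, I would show that $\chi'/\chi = \Theta(n^{4/3})$ inside the critical window $|p - 1/n| = O(n^{-4/3})$, while $\chi'/\chi$ is asymptotically smaller outside. The lower bound in~\eqref{main:max} then follows by exhibiting a single point (e.g., $p = 1/n$) where $\chi'/\chi \ge c\,n^{4/3}$; the upper bound in~\eqref{main:max} and the argmax statement~\eqref{main:p} follow by making the outside bound $o(n^{4/3})$.

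The first step is a reduction to moments of the component-size distribution. Using Russo's formula $\chi'(p) = \sum_e \E_p[\Delta_e|C(v)|]$ and unfolding $\Delta_e|C(v)|$ for $e=\{u,w\}$ (which is nonzero only when adding~$e$ merges~$C(v)$ with a disjoint component), together with $\sum_C|C|^2 = \sum_u |C(u)|$ and the exchangeability of vertices, one derives the identity
$$\chi'(p) \;=\; \frac{1}{n}\,\E_p\!\biggl[\Bigl(\sum_C|C|^2\Bigr)^{\!2} - \sum_C|C|^4\biggr] \;+\; \mathrm{corr}(p),$$
where the main term equals $(1/n)\,\E_p\sum_{C \neq C'}|C|^2|C'|^2 \ge 0$, and $\mathrm{corr}(p) \ge 0$ is an error that arises from evaluating $\Delta_e$ in $\omega - e$ rather than $\omega$. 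Since each bridge of $C(v)$ contributes at most $|C(v)|$ and $C(v)$ contains at most $|C(v)|-1$ bridges, $\mathrm{corr}(p) \le \E_p|C(v)|^2$, which will be negligible compared to the main term in every regime. Analogously $\chi(p) = (1/n)\,\E_p\sum_C|C|^2$.

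Inside the critical window $p = 1/n + \lambda n^{-4/3}$ with $|\lambda| = O(1)$, the scaling-limit theorem of Aldous (with moment convergence from \L uczak--Pittel--Wierman and Janson--\L uczak) gives $|C_j|/n^{2/3} \dto \xi_j(\lambda)$ jointly and in moments. This yields $\E_p\sum_C|C|^2 = \Theta(n^{4/3})$ and $\E_p\sum_{C\neq C'}|C|^2|C'|^2 = \Theta(n^{8/3})$ (the latter with strictly positive scaled limit $\E\sum_{j\neq k}\xi_j(\lambda)^2\xi_k(\lambda)^2$), hence $\chi(p) = \Theta(n^{1/3})$, $\chi'(p) = \Theta(n^{5/3})$, and $\chi'/\chi = \Theta(n^{4/3})$. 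In the subcritical regime $p=(1-\delta)/n$ with $\delta \ge Kn^{-1/3}$, branching-process comparison with offspring $\Bin(n-1,p)$ gives $\chi = \Theta(\delta^{-1})$ and, since the components are small and nearly independent, the identity yields $\chi'(p) = O(n\delta^{-2})$, so $\chi'/\chi = O(n/\delta) \le O(K^{-1}n^{4/3})$. In the supercritical regime $p=(1+\delta)/n$ with $\delta \ge Kn^{-1/3}$, the giant dominates: $\chi \sim \rho^2 n$ with $\rho \sim 2\delta$, and expanding $(\sum_C|C|^2)^2 - \sum_C|C|^4$ past the cancelling leading order $(\rho n)^4$ gives the sub-leading contribution $2|C_1|^2\sum_{i\ge 2}|C_i|^2$, from which (with the non-giant part being dually subcritical) one deduces $\chi'/\chi = O(n/\delta) \le O(K^{-1}n^{4/3})$ again. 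Finally, for $p$ bounded away from $1/n$ on either side, $\chi \le n$ and $\chi' \le \binom{n}{2}$ give $\chi'/\chi = O(n) = o(n^{4/3})$. Choosing $K$ large enough that $K^{-1}$ beats the critical-window constant then establishes both~\eqref{main:p} and~\eqref{main:max}.

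The main obstacle, in my view, is Step~1: deriving the clean identity for $\chi'$ and showing the correction term is negligible in every regime. A secondary difficulty is the supercritical analysis, where $(\sum_C|C|^2)^2$ and $\sum_C|C|^4$ cancel at leading order $(\rho n)^4$ and one must expand to subleading order (relying on the subcritical structure of the non-giant part via duality). For the critical window, strict positivity of $\E\sum_{j\neq k}\xi_j^2\xi_k^2$ in the Aldous limit is automatic from the almost-sure presence of at least two nonzero $\xi_j$, but the uniform integrability needed for moment convergence must be invoked carefully from the existing literature.
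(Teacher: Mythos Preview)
Your strategy is the paper's: Russo--Margulis gives the exact identity $\frac{\dx}{\dx p}S_n(p)=\frac{1}{1-p}\,\E_{n,p}\sum_{i\neq j}|C_i|^2|C_j|^2$ (so your correction term, which in fact equals $\frac{p}{1-p}$ times the main term, is unnecessary and your ``main obstacle'' is not one), and then one bounds $\chi'/\chi$ regime by regime via susceptibility estimates, the Aldous limit, and duality. A few places where your sketch is thinner than the paper: (i) in the subcritical regime the paper avoids any concentration input by the clean inequality $\E\sum_{i\neq j}|C_i|^2|C_j|^2\le S_n(p)^2$ (conditioning on $C(v)$ and monotonicity of $S_m$), giving $\chi'/\chi\le S_n(p)/(1-p)$ directly; (ii) in the supercritical regime the duality argument must also handle the event that $|C_1|$ is atypically small, which the paper does via Harris's inequality combined with an exponential tail bound for $|C_1|$; (iii) your claim $\chi'\le\binom{n}{2}$ for $p$ bounded away from $1/n$ is unjustified (each Russo summand is an expectation $\E[\nabla_e|C(v)|]$, not a probability), and the paper instead treats large $np$ by two further cases: component counting for $A\le np\le n/2$, and $2$-edge-connectedness for $np\ge(\log n)^2$. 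One stylistic difference worth noting: for the critical-window lower bound you invoke full moment convergence to the Aldous limit, whereas the paper uses only the elementary consequence $\Pr_{n,p}(|C_2|\ge n^{2/3})\ge\delta_\lambda>0$, reserving moment convergence for Theorem~\ref{thm:main2}.
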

\begin{remark}
Theorem~\ref{thm:main2} shows that~\eqref{main:p} remains valid 
with $O(n^{-4/3})$ replaced by $\Theta(n^{-4/3})$. 
\end{remark}
Having established the qualitative behaviour of the logarithmic derivative for $G_{n,p}$, 
it is intriguing to investigate the finer scaling behaviour inside critical window. 
By symmetry considerations it might be tempting to believe that $p=1/n$ or
$p=1/(n-1)$ could be the maximizer of $\frac{\dx}{\dx p} \log
\goodchi_{K_n}(p)$, as speculated by~Peres~\cite{Peres2012}.  
Our second main result refutes this tantalizing belief, instead
strengthening the general feeling that $\lambda=0$ is no special point 
inside the critical window of form $p=1/n + \lambda n^{-4/3}$.  
\begin{theorem}[Scaling inside the critical window of $G_{n,p}$]%
\label{thm:main2}
Given $\lambda \in \RR$, for $p =1/n + \bigl(\lambda+o(1)\bigr) n^{-4/3}$ 
we have, as $n \to \infty$, 
\begin{align}
\label{main:conv}
\frac{\goodchi_{K_n}(p)}{n^{1/3}} &\to f(\lambda),\\
\label{main:der}
\frac{\frac{\dx}{\dx p} \log \goodchi_{K_n}(p)}{n^{4/3}} 
& \to \frac{\dx}{\dx \lambda} \log f(\lambda)
>0, 
\end{align}
where the infinitely differentiable function~$f=f_2:\RR \to (0,\infty)$ is
defined in~\eqref{def:Lambda}--\eqref{def:fk}.
Moreover, if $p=1/n+\lambda n^{-4/3}$, then
the convergence in \eqref{main:conv}--\eqref{main:der} is uniform for 
$\lambda$ in any compact interval $[\lambda_1,\lambda_2]\subset\RR$. 
Furthermore, 
\begin{equation}\label{eq:main20}
\frac{\dx^2}{\dx \lambda^2}\log f (0) \neq 0 .
\end{equation}
\end{theorem}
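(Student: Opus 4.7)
}
Note first that \eqref{eq:main20} is equivalent to the algebraic inequality $f(0)f''(0)\ne f'(0)^2$, and that it is precisely the ingredient needed to upgrade \eqref{main:p} from $O(n^{-4/3})$ to the $\Theta(n^{-4/3})$ claimed in the remark after Theorem~\ref{thm:main}: if $(\log f)''(0)$ vanished, then a first-order analysis of the maximizer of $(\log f)'$ near $\lambda=0$ would be inconclusive, and in particular the possibility that the maximizer of $\frac{\dx}{\dx p}\log\goodchi_{K_n}(p)$ lies at $p=1/n+o(n^{-4/3})$ could not be ruled out. The whole point of \eqref{eq:main20} is therefore to exclude this symmetric-looking scenario (and hence also to refute the speculation of~\cite{Peres2012} that $p=1/n$ or $p=1/(n-1)$ is the exact maximizer).

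My plan is to exploit the explicit definition of $f=f_2$ in \eqref{def:Lambda}--\eqref{def:fk}, together with the infinite differentiability granted by Theorem~\ref{thm:main2}. First I would write $f_2$ in the integral/series form supplied there and differentiate twice with respect to $\lambda$; term-by-term (or under the integral) differentiation is justified because the uniform convergence in \eqref{main:conv} on compact intervals carries over to derivatives via the analytic structure of the representation. This produces explicit expressions for $f(0)$, $f'(0)$, and $f''(0)$ in the same language. The remaining task is to combine them into $D:=f(0)f''(0)-f'(0)^2$ and to verify $D\ne 0$.

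The main obstacle is that the three quantities $f(0)$, $f'(0)$, $f''(0)$ are almost certainly transcendental constants, most naturally expressed via Airy-type integrals or functionals of Brownian motion with parabolic drift coming from Aldous's description of the critical window. I do not see any soft symmetry, monotonicity, or convexity argument that would force the particular combination $D$ to be nonzero, and in principle an accidental cancellation must be ruled out. I therefore expect the proof to proceed by explicit computation: either a closed-form manipulation that identifies $D$ with a manifestly nonzero expression, or, failing that, a rigorous numerical evaluation of each of the three integrals with certified error bounds, from which the sign of $D$ can be read off. A more conceptual alternative would be to realize $D$ as the variance of some auxiliary random variable in a tilted excursion measure (automatically giving strict positivity), but I do not see an obvious candidate for such a representation and expect that the actual proof hinges on a careful, possibly computer-assisted, direct evaluation.
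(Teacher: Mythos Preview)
Your plan for \eqref{eq:main20} is essentially the paper's: reduce to $f(0)f''(0)\ne f'(0)^2$ and verify it by a rigorous numerical evaluation with certified error bounds; there is no variance or soft argument in the paper either. The concrete mechanism the paper uses is the recursion $f_k'(\lambda)=\tfrac12 f_{k+2}(\lambda)-\lambda f_{k+1}(\lambda)$ (obtained by differentiating \eqref{def:fk} under the integral), which at $\lambda=0$ expresses $f'(0)$ and $f''(0)$ in terms of $f_3(0)=2$, $f_4(0)$ and $f_6(0)$; since $F(x,0)=x^3/6$, each $f_k(0)$ becomes an explicit series $\sum_{\ell\ge0}(2\pi)^{-1/2}w_\ell I_{k,\ell}$ in Wright's constants and gamma functions, and exponential tail bounds on this series (truncating at $\ell_0=75$) yield enough precision to certify $(\log f)''(0)\approx 0.297\ne 0$.
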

The definition of the function~$f$ appearing in Theorem~\ref{thm:main2} is
quite involved, since it intuitively needs to capture the contribution of
components with arbitrary numbers of cycles.
It is easy to find asymptotics as $\gl\to\pm\infty$; we have 
$f(\gl)\sim|\gl|^{-1}$ as $\gl\to-\infty$ and
$f(\gl)\sim 4\gl^{2}$ as $\gl\to+\infty$; hence
$\log f(\gl)=-\log|\gl|+o(1)$ as $\gl\to-\infty$ and
$\log f(\gl)=2\log\gl+O(1)$ as $\gl\to+\infty$;
furthermore, 
$\frac{\dx}{\dx \lambda} \log f(\lambda)=O(1/|\gl|)$ for all $\gl\in\mathbb
R$,
see Appendix~\ref{AppAgl} for proofs. 
Theorem~\ref{thm:main2} also extends to convergence of higher derivatives, 
see Appendix~\ref{AppAhigher}.

It would be interesting to know whether the logarithmic 
derivative~$\frac{\dx}{\dx \lambda} \log f(\lambda)$ has a unique 
maximizer~$\gl^*$, and whether it is unimodal.
Figure~\ref{fig:plots} below (which is obtained by numerical integrations)
suggests that this is the case, with~$\gl^*\approx1$ 
(we conjecture~$\gl^*> 1$ based on our limited precision numerical data).

\begin{figure}[h] 
\centering
\vspace{-0.125em}
\includegraphics[width=2.75in]{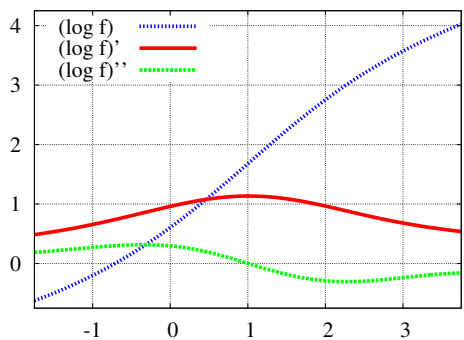}\vspace{-0.75em}
\caption{\label{fig:plots} Plot of the functions $\log f(\lambda)$, 
$\frac{\dx}{\dx \lambda} \log f(\lambda)$ and $\frac{\dx^2}{\dx \lambda^2} \log f(\lambda)$ 
for $\lambda \in [-1.75,3.75]$, 
where~$f$ is as in Theorem~\ref{thm:main2}. 
It provides some evidence for our belief that 
$\frac{\dx}{\dx \lambda} \log f(\lambda)$ has a unique 
maximizer~$\lambda^* \approx 1$.}%
\end{figure}

The high-level structure of our proofs is as follows. 
For Theorem~\ref{thm:main} our starting point is the Margulis--Russo
Formula, which allows us to write $\frac{\dx}{\dx p}\goodchi_{K_n}(p)$ in
terms of sums involving the squared component sizes of~$G_{n,p}$. Using
ideas from random graph theory we then estimate these sums, combining
correlation inequalities and the `symmetry rule' 
(also called `discrete duality principle') 
with results for the largest component and the susceptibility of $G_{n,p}$,
which eventually implies~\eqref{main:p}--\eqref{main:max}; see
Section~\ref{sec:maxder}.   
For Theorem~\ref{thm:main2} with $p=1/n + \bigl(\lambda
+o(1)\bigr) n^{-4/3}$, our starting point is the well-known fact that
$X_{n,2}=\sum_{j \ge 1}|C_j|^2/n^{4/3} \dto W_{\lambda,2}$ for some random
variable~$W_{\lambda,2}$. Using technical arguments we then justify taking
expectations and derivatives, which in view of $\goodchi_{K_n}(p)/n^{1/3} = \E_p
X_{n,2}$ eventually establishes~\eqref{main:conv}--\eqref{main:der}
with $f(\lambda)=\E W_{\lambda,2}$; see Section~\ref{sec:convergence}. For
inequality~\eqref{eq:main20} we show that~$f$ and its derivatives 
can be computed at~$\lambda=0$ by series expansions (exploiting recursive 
formulas for the area under a normalized Brownian excursion). 
Since these series converge exponentially, we can then 
numerically verify~\eqref{eq:main20} by finite truncation;
see Section~\ref{sec:zero}.

\subsection{Remarks on some other graphs}
In the present paper we discuss only the Erd{\H{o}}s--R{\'e}nyi random 
graph $G_{n,p}=(K_n)_p$, i.e.,  
percolation on the complete $n$-vertex graph.
In particular,  
Theorem~\ref{thm:main} shows that the definition of the critical 
probability~$\pc$ suggested by Nachmias and Peres~\cite{NP08,Peres2012} `works' 
in this case.
It is an interesting open problem to establish analogous results
for other finite graphs.

For example, consider
 again the hypercube with vertex set $\{0,1\}^m$ discussed above,
see \cite{BCHSS1,BCHSS2,vdHN2012,vdHN2014}. 
In the subcritical phase $p = (1-\eps)\pc$ with 
$\eps^3n \to\infty$,
\cite[Proposition~A.1]{BCHSS1} combined with 
\cite[Theorem 1.3 and Theorem~1.5]{BCHSS2} show that
$\frac{\dx}{\dx p} \goodchi_G(p) \sim m (\goodchi_G(p))^{2}$
and $\goodchi_G(p) \sim \eps^{-1}$,
and thus
\[
\frac{\dx}{\dx p} \log \goodchi_G(p) \sim m \goodchi_G(p) = \Theta(\eps^{-1} m) 
=o\bigl(n^{1/3} m\bigr).
\]
In the supercritical phase $p=(1+\eps)\pc$ with
$\eps^3n \to \infty$ and $\eps=\eps(n) \to 0$, we have $\goodchi_G(p) \sim 4
\eps^2 n$ 
according to
\cite[Theorem~1.1]{vdHN2012}; hence
it is natural to conjecture
that 
the logarithmic derivative satisfies 
\begin{equation*}\label{eq:logder:MFG}
\frac{\dx}{\dx p} \log \goodchi_G(p) 
= \pc^{-1}\frac{\dx}{\dx \eps} \log \goodchi_G(p) 
\approx  \frac{2}{\eps \pc} = \Theta(\eps^{-1}m)   
=o\bigl(n^{1/3} m\bigr) ,
\end{equation*}
in the supercritical phase too, 
and, moreover, that the logarithmic derivative has a maximum of order
$\Theta(n^{1/3}m)$ which is attained inside the critical window. 
Proving this, however, remains a challenging problem.

Another important example would be random $d$-regular graphs
with~$d=d(n) \to \infty$. 

Moreover, it would be conceptually very interesting to start with the maximizer 
of~\eqref{eq:pc:ld} and then derive properties of the phase transition of~$G_p$ 
(rather than, as in this paper, using known results for~$G_{p}$ to verify 
properties of the maximizer). 

It also seems highly desirable to better understand the critical
probability~$p_c$ for finite transitive graphs~$G$ 
which do not exhibit the mean-field behavior of the complete graph~$K_n$ 
or the hypercube~$\{0,1\}^m$.
Here the perhaps simplest example is percolation on the $n$-vertex 
cycle, $n \ge 3$, for which it is not difficult to check that 
there are three different phases: 
(i)~for $1-p = o(n^{-1})$ we typically have $|C_1| = n$, 
(ii)~for $1-p = \omega(n^{-1})$ we typically have $|C_1| = o(n)$, 
and (iii)~for~$1-p = \Theta(n^{-1})$ the rescaled sizes $|C_1|/n,
 \cdots, |C_r|/n$ of the $r=\Theta(1)$ largest components are 
not concentrated. 
Hence the critical window is parametrized by 
$p = 1-\lambda_n n^{-1}$ with $\lambda_n = \Theta(1)$. 
For $p \in (0,1)$ it is routine to see that 
\begin{align*}
\E_p |C(v)| & = 1 + \sum_{1 \le j < n}(p^{j}+p^{n-j}-p^n) = 1 + \sum_{1 \le j < n}(2p^{j}-p^{n}) , \\
\frac{\dx}{\dx p} \E_p |C(v)| &= \sum_{1 \le j < n}(2jp^{j-1}-np^{n-1}) = \sum_{1 \le j < n}2jp^{j-1}(1-p^{n-j}) .
\end{align*}
A short calculation shows that $\E_p |C(v)| = \Theta(n^{1/3})$ 
implies $p = 1-\Theta(n^{-1/3})$. Furthermore, 
$\frac{\dx}{\dx p} \log \E_p |C(v)| = \Theta(n)$ 
for $1-p = \Theta(n^{-1})$, and 
$\frac{\dx}{\dx p} \log \E_p |C(v)| = 
\Theta(\min\{(1-p)n^2,(1-p)^{-1}\}) = o(n)$ otherwise. 
For the critical probability~$\pc$ of $n$-vertex cycles, 
it follows that the mean-field definition of Borgs et al.\ fails 
(as expected, since cycles are not `high dimensional'). 
By contrast, the definition based on the maximizer of the 
logarithmic derivative of the susceptibility does correctly 
predict $\pc = 1- \Theta(n^{-1})$ and the $\Theta(n^{-1})$--width 
of the critical window, supporting the hope that this 
definition might work beyond the mean-field case.

\subsection{Some notation} 
For emphasis, we will often use $\Pr_{n,p}$ and $\E_{n,p}$ for
probability and expectation with respect to $G_{n,p}$. 
We let~$C_i$ denote the components of $G_{n,p}$ in order of decreasing
sizes, $|C_1| \ge |C_2| \ge \cdots$ (resolving ties by taking the component 
with the smallest vertex label first, for definiteness). 
Finally, convergence in distribution is denoted $\dto$, 
and unspecified limits are as $n\tooo$.

\section{Maximizer of the logarithmic derivative}\label{sec:maxder}
In this section we prove Theorem~\ref{thm:main}. Our arguments combine the 
Margulis--Russo formula with results and ideas from random graph theory. 
For mathematical convenience we shall work with the `rescaled' 
susceptibility parameters
\begin{align}
\label{def:S}
S(G_{n,p}) &:= \sum_{v \in [n]} |C(v)| 
= \sum_i |C_i|^2 ,
\end{align}
where the component $C(v)$ is with respect to~$G_{n,p}$,
as usual, and 
\begin{align}\label{def:Sn}
  S_n(p) & := \E S(G_{n,p}) = 
\E_{n,p}\Bigpar{\sum_i |C_i|^2}.
\end{align}
Recall that $\goodchi_{K_n}(p):=\E_{n,p}|C(v)|$, which is the same for 
every $v\in[n]$ by symmetry, and thus by \eqref{def:S}--\eqref{def:Sn} 
\begin{align}\label{Sn-chi}
  S_n(p) & =  n \goodchi_{K_n}(p) ,
\end{align}
which implies
\begin{align}
\label{eq:logder:chi:S}
\frac{\dx}{\dx p} \log \goodchi_{K_n}(p) = \frac{\dx}{\dx p} \log S_n(p) . 
\end{align}
Theorem~\ref{thm:main} follows from equation~\eqref{eq:logder:chi:S} and 
inequalities~\eqref{rough:upper}--\eqref{rough:lower} of
Theorem~\ref{thm:rough} below. 
(In fact, in the lower bound~\eqref{rough:lower}, it suffices to consider,
for example, $\gl=0$.)
\begin{theorem}[Bounds for the logarithmic derivative]%
\label{thm:rough}
There is a constant $C>0$ such that, for all $n \ge 1$ and~$p \in (0,1)$,  
\begin{equation}\label{rough:upper}
\frac{\dx}{\dx p} \log S_n(p) \le C \cdot \min\bigl\{|p-1/n|^{-1}, \: n^{4/3}\bigr\} .
\end{equation}
Furthermore, for every $\lambda \in \RR$ 
there is a constant $D_{\lambda}>0$ such that, for all 
$n \ge 2$ and $p=1/n+ \lambda n^{-4/3}\in(0,1)$, 
\begin{equation}\label{rough:lower}
\frac{\dx}{\dx p} \log S_n(p) \ge D_{\lambda} n^{4/3} .
\end{equation}
\end{theorem}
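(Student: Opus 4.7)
The starting point is to apply the Margulis--Russo formula to the functional $S(\omega)=\sum_i|C_i(\omega)|^2$. The pivotal contribution of each edge $\{u,v\}$ is $2|C(u)||C(v)|\indic{u\notarrow v}$, since adding $\{u,v\}$ either merges two distinct components (increasing $S$ by exactly $2|C(u)||C(v)|$) or does nothing. Summing and using the identity $\sum_{u\ne v,\,u\notarrow v}|C(u)||C(v)|=\bigl(\sum_i|C_i|^2\bigr)^2-\sum_i|C_i|^4=\sum_{i\ne j}|C_i|^2|C_j|^2$, this yields
\begin{equation*}
\frac{\dx}{\dx p}S_n(p)=\E_{n,p}\Bigl[\sum_{i\ne j}|C_i|^2|C_j|^2\Bigr]=\E_{n,p}[S^2]-\E_{n,p}\Bigl[\sum_i|C_i|^4\Bigr].
\end{equation*}
Recalling $S_n(p)=n\goodchi_{K_n}(p)$, Theorem~\ref{thm:rough} reduces to two-sided estimates of this expectation relative to $\E_{n,p}[S]$.

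For the upper bound the main tool is the van den Berg--Kesten (BK) inequality. On the event $\{u\notarrow v\}$ the components $C(u)$ and $C(v)$ are vertex-disjoint, so any open paths $u\lra w\subseteq C(u)$ and $v\lra w'\subseteq C(v)$ automatically use disjoint edges, giving
\begin{equation*}
\Pr_{n,p}[u\lra w,\,v\lra w',\,u\notarrow v]\le\Pr_{n,p}[u\lra w]\Pr_{n,p}[v\lra w'].
\end{equation*}
Summing over $w,w'$ yields $\E_{n,p}[|C(u)||C(v)|\indic{u\notarrow v}]\le\goodchi_{K_n}(p)^2$, whence $\frac{\dx}{\dx p}\log S_n(p)\le n\goodchi_{K_n}(p)$. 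I would then split into three regimes. (i)~For $p\le 1/n$, branching-process domination gives $\goodchi_{K_n}(p)\le 1/(1-(n-1)p)$, which (using $1-(n-1)p\ge n|p-1/n|$) reduces to $n\goodchi_{K_n}(p)\le 1/|p-1/n|$. (ii)~Inside the critical window $|np-1|=O(n^{-1/3})$, the classical estimate $\goodchi_{K_n}(p)=O(n^{1/3})$ gives $n\goodchi_{K_n}(p)=O(n^{4/3})$. (iii)~For the strictly supercritical regime $np\ge 1+n^{-1/3}$ the BK route is too crude since $\goodchi$ is dominated by the giant; I would instead invoke the discrete duality principle---conditional on $C_1$, the graph on $V\setminus C_1$ is stochastically dominated by a subcritical $G_{n-|C_1|,p}$ with gap $\eta\asymp\epsilon:=np-1$---and split $S=|C_1|^2+T$ with $T=\sum_{i\ge 2}|C_i|^2$, using $\sum_{i\ne j}|C_i|^2|C_j|^2\le 2|C_1|^2T+T^2$ together with the dual subcritical moment bounds $\E[T\mid C_1]=O(n/\eta)$, $\E[T^2\mid C_1]=O(n^2/\eta^2)$, the concentration $|C_1|\approx 2\epsilon n$, and $\E[S]\asymp\epsilon^2 n^2$ to obtain the required $O(1/|p-1/n|)$ bound.

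For the lower bound, fix $\lambda\in\RR$ and $p=1/n+\lambda n^{-4/3}$. Invoking the non-concentration of $(|C_1|,|C_2|)/n^{2/3}$ inside the critical window---for instance from Aldous's scaling limit~\cite{Aldous1997}---there exist constants $a=a(\lambda)>0$ and $c=c(\lambda)>0$ such that $\Pr_{n,p}[|C_1|\ge an^{2/3},\,|C_2|\ge an^{2/3}]\ge c$ for all sufficiently large $n$. This immediately gives $\frac{\dx}{\dx p}S_n(p)\ge 2\E_{n,p}[|C_1|^2|C_2|^2]\ge 2a^4c\,n^{8/3}$, and combined with $S_n(p)=O(n^{4/3})$ from~(ii) produces $\frac{\dx}{\dx p}\log S_n(p)\ge D_\lambda n^{4/3}$ for large $n$; a finite number of small-$n$ cases is absorbed by shrinking $D_\lambda$. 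The main technical obstacle is case~(iii) of the upper bound: in the strictly supercritical regime both $\E_{n,p}[S^2]$ and $\E_{n,p}[\sum_i|C_i|^4]$ are of order $\epsilon^4 n^4$ and \emph{nearly cancel} in the difference, so the whole bound hinges on the cross term $|C_1|^2T$ being of the precise order $\Theta(\epsilon n^3)$; securing this requires a careful transfer of the conditional duality estimate into an unconditional moment bound, together with adequate control of the large deviations of $|C_1|$.
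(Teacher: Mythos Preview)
Your overall strategy matches the paper's almost exactly: Margulis--Russo to express $\frac{\dx}{\dx p}S_n(p)$ in terms of $\E_{n,p}\bigl[\sum_{i\neq j}|C_i|^2|C_j|^2\bigr]$, the key inequality $\E_{n,p}\bigl[\sum_{i\neq j}|C_i|^2|C_j|^2\bigr]\le S_n(p)^2$ giving $\frac{\dx}{\dx p}\log S_n(p)\le S_n(p)/(1-p)$, then a subcritical/critical/supercritical split with the symmetry rule (duality) in the last regime; and for the lower bound, Aldous's result that two components of size $\ge an^{2/3}$ coexist with uniformly positive probability. The paper obtains the key inequality by conditioning on $C(v)$ and using monotonicity of $S_m(p)$ in $m$, whereas you use BK; both arguments are short and give the same bound.

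A few technical points deserve attention. First, your Margulis--Russo identity omits the factor $1/(1-p)$: the correct formula is $\frac{\dx}{\dx p}S_n(p)=\E_{n,p}\bigl[\sum_{i\neq j}|C_i|^2|C_j|^2\bigr]/(1-p)$; this is harmless for $p\le 1/2$ but matters once $p$ is bounded away from $0$. Second, your three-case split leaves the range $np$ large (say $np\ge A$ for a constant $A$, and especially $np\ge(\log n)^2$) unaddressed; the duality heuristic with ``$\eta\asymp\eps$'' and ``$|C_1|\approx 2\eps n$'' is calibrated to $\eps\to 0$ and does not directly cover $\eps=\Theta(1)$ or $p$ close to $1$. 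The paper handles this with two further cases: for $A\le np\le n/2$ it uses crude first-moment counting plus a large-deviation bound for $|C_1|$, and for $np\ge(\log n)^2$ it observes that $G_{n,p}$ is $2$-edge-connected with probability $1-n^{-\omega(1)}$, so there are essentially no pivotal edges. Third, in your case~(iii) note that ``conditional on $C_1$, the rest is $G_{n-|C_1|,p}$'' is only true after conditioning on the decreasing event that all other components are no larger; the paper removes this conditioning via Harris's inequality, and also needs Harris to control the contribution from the event $\{|C_1|\text{ small}\}$ (where the conditional duality bound is unavailable). Your closing paragraph correctly identifies exactly this as the crux.
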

The remainder of this section is devoted to the proof of 
Theorem~\ref{thm:rough}, and we start by studying a combinatorial 
form of $\frac{\dx}{\dx p}S_n(p)$. 
Writing $v \leftrightarrow w$ for the event that~$v$ and~$w$ are 
connected (which trivially holds if $v=w$), note that 
$S(G)= \sum_{v,w \in V(G)} \indic{v \leftrightarrow w}$
and thus, by taking the expectation, see \eqref{def:Sn},
\begin{equation}\label{eq:S:sum}
S_n(p) = \sum_{v,w \in [n]} \Pr_{n,p}(v \leftrightarrow w) .
\end{equation}
We now record the following simple monotonicity property,
which is obvious from \eqref{eq:S:sum}. 
\begin{lemma}\label{lem:Sn}
If $p \le p'$ and $n \le n'$, then $S_n(p) \le S_{n'}(p')$.  
\qed
\end{lemma}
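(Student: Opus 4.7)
The plan is to read off the lemma directly from the sum representation~\eqref{eq:S:sum}, by verifying monotonicity separately in each variable via explicit monotone couplings, and then combining.

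First I would prove monotonicity in~$p$ at fixed $n$. Assign to each edge $e$ of $K_n$ an independent uniform random variable $U_e$ on $[0,1]$ and let $G_{n,p}$ consist of the edges with $U_e \le p$. For $p \le p'$ this gives $G_{n,p} \subseteq G_{n,p'}$ almost surely, and since the event $\{v \lra w\}$ is increasing in the edge set, every term $\Pr_{n,p}(v \lra w)$ on the right-hand side of~\eqref{eq:S:sum} is dominated by the corresponding $\Pr_{n,p'}(v \lra w)$. Summing gives $S_n(p) \le S_n(p')$.

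Next I would prove monotonicity in~$n$ at fixed edge probability. Realize $K_n$ as the induced subgraph of $K_{n'}$ on the vertex set~$[n]$, and couple $G_{n,p'}$ and $G_{n',p'}$ by using the same edge indicators on $\binom{[n]}{2}$ and fresh independent indicators for the extra edges. Then every path witnessing $v \lra w$ in $G_{n,p'}$ is still present in $G_{n',p'}$, so $\Pr_{n,p'}(v \lra w) \le \Pr_{n',p'}(v \lra w)$ for all $v,w \in [n]$. Using~\eqref{eq:S:sum}, this shows
\[
 S_n(p') = \sum_{v,w \in [n]} \Pr_{n,p'}(v \lra w) \le \sum_{v,w \in [n]} \Pr_{n',p'}(v \lra w) \le \sum_{v,w \in [n']} \Pr_{n',p'}(v \lra w) = S_{n'}(p'),
\]
where the second inequality is just the addition of nonnegative terms indexed by pairs with at least one coordinate in $[n']\setminus[n]$. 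Chaining $S_n(p) \le S_n(p') \le S_{n'}(p')$ yields the claim.

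There is no real obstacle here; the result is essentially immediate from \eqref{eq:S:sum} together with the fact that connection events are increasing in edges and vertices. The only thing to be a little careful about is to use the same value of~$p$ in the intermediate step (rather than coupling $p$ and $n$ simultaneously), so that each of the two monotonicities reduces to a single monotone coupling.
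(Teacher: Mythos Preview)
Your argument is correct and follows the same idea as the paper, which simply declares the lemma ``obvious from~\eqref{eq:S:sum}'' without further proof. You have just spelled out explicitly the monotone couplings that make each term $\Pr_{n,p}(v\lra w)$ nondecreasing in both~$p$ and~$n$.
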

We say that an edge $e \in E(K_n)$ is \emph{pivotal} for $v \leftrightarrow w$, 
if~$v \leftrightarrow w$ in~$G_{n,p}+e$ and~$v \not\leftrightarrow w$ in~$G_{n,p}-e$ 
(i.e., in the possibly modified graphs where $e$ is added and removed, respectively). 
Recalling the form of~\eqref{eq:S:sum}, for $p \in (0,1)$ the Margulis--Russo 
Formula~\cite{Margulis,Russo} gives 
\begin{equation}\label{eq:S:der}
\frac{\dx}{\dx p}S_n(p) = \sum_{v,w \in [n]} \frac{\dx}{\dx p}\Pr_{n,p}(v \leftrightarrow w) = \sum_{v,w \in [n]} \sum_{e \in E(K_n)} \Pr_{n,p}(\text{$e$ is pivotal for $v \leftrightarrow w$}) .
\end{equation}
Let $\cP_{e,v,w}$ denote the event that (i)~$e \not\in G_{n,p}$ and (ii)~$e$~is 
pivotal for $v \leftrightarrow w$. 
Since being pivotal does not depend on the status of~$e$, it follows that 
\begin{equation}\label{eq:S:der:Q}
\frac{\dx}{\dx p}S_n(p) 
= \frac{\E_{n,p}\bigl(\sum_{v,w \in [n]} \sum_{e \in E(K_n)} \indic{\cP_{e,v,w}} \bigr)}{1-p} .
\end{equation}
An edge not present in $G_{n,p}$ is pivotal for $v \leftrightarrow w$ if and 
only if one of its endpoints is in $C(v)$ and the other is in $C(w) \neq C(v)$. 
Hence $\sum_{e \in E(K_n)} \indic{\cP_{e,v,w}} = \indic{C(v) \neq C(w)}|C(v)||C(w)|$. 
Consequently,
\begin{equation}\label{eq:Q:sum}
\sum_{v,w \in [n]} \sum_{e \in E(K_n)} \indic{\cP_{e,v,w}} = 
\sum_{v \in [n]} |C(v)| \sum_{w \not\in C(v)} |C(w)|
= \sum_{i \neq j}|C_i|^2|C_j|^2 ,
\end{equation}
and thus, by \eqref{eq:S:der:Q},
\begin{equation}\label{eq:SQ:sum}
\frac{\dx}{\dx p}S_n(p) 
=\frac{\E_{n,p}\lrpar{
\sum_{v \in [n]} |C(v)| \sum_{w \not\in C(v)} |C(w)|}}
{1-p}
=\frac{\E_{n,p}\lrpar{ \sum_{i \neq j}|C_i|^2|C_j|^2 }}
{1-p},
\end{equation}
which eventually allows us to bring random graph theory into play.

\subsection{Upper bounds}\label{sec:maxder:sub}
In this subsection we prove the upper bound~\eqref{rough:upper} 
from Theorem~\ref{thm:rough}. 

We shall use some more or less well-known results for the 
susceptibility
and the size of the largest component of~$G_{n,p}$ 
in near-critical cases, which we state as the following theorem.
(See, e.g., \cite{BCHSS1,JansonLuczak2008,LNNP2014,BR2014} for similar or
related results.) 

\begin{theorem}
\label{thm:RG}
\begin{romenumerate}
\item \label{T:RGa}
There is a constant $D >0$ such that, for all $n \ge 1$,
$p \in [0,1]$, and $\eps>0$, 
\begin{alignat}{2}
S_n(p) &\le \eps^{-1} n\qquad&&\text{if $np \le 1-\eps$},
\label{eq:S-}\\ 
\label{eq:S+}
S_n(p) &\le 
D \eps^2 n^2 \qquad&&\text{if $np \le 1+\eps$ and $\eps^3 n \ge 1$}.
\end{alignat}
\item \label{T:RGb}
For any $A>0$ there are constants $a,B,n_0>0$ such that, 
for all $n \ge n_0$, $p \in [0,1]$, $\eps \in (0,A]$, and 
$\delta \in (0,1/2]$ satisfying $np=1+\eps$ and $\delta^2\eps^3n \ge B$, 
\begin{equation}\label{C1:supcr:tail}
\Pr_{n,p}\bigl(\bigl| |C_1| - \rho(\eps) n \bigr| \ge \delta \rho(\eps) n\bigr) \le e^{-a \delta^2 \eps^3 n} , 
\end{equation}
where $\rho(\eps)>0$ is the positive solution to 
$1-\rho(\eps) = e^{-(1+\eps) \rho(\eps)}$. 
\item \label{T:RGc}
Furthermore, for any $A>0$ there are constants $\delta \in (0,1/2)$ 
and $c>0$ such that, for all $\eps \in (0,A]$, 
\begin{equation}
\label{C1:supcr:alpha}
0 < \bigl(1-(1-\delta)\rho(\eps)\bigr) \cdot (1+\eps) \le 1-c \eps. 
\end{equation}
\end{romenumerate}
\end{theorem}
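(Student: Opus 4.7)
My plan is to prove the three parts separately, leaning on standard random-graph techniques and the literature already cited in the excerpt.

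For part (i), the subcritical bound \eqref{eq:S-} follows from the classical branching-process domination: the breadth-first exploration of $C(v)$ in $G_{n,p}$ is stochastically dominated by a Galton--Watson tree with $\Bin(n-1,p)$ offspring (mean $(n-1)p<np\le 1-\eps$), whose expected total progeny is at most $1/(1-(n-1)p)\le 1/\eps$; summing $\goodchi_{K_n}(p)\le 1/\eps$ over $v\in[n]$ and using \eqref{Sn-chi} gives $S_n(p)\le n/\eps$. For the near-critical bound \eqref{eq:S+} I would combine \eqref{eq:S-} (which already covers $np\le 1-\eps/2$, since $n/(1-np)\le 2n/\eps\le 2\eps^2 n^2$ whenever $\eps^3 n\ge 1$) with a standard near-critical analysis on the complementary range $np\in(1-\eps/2,\,1+\eps]$. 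In the latter I would decompose $S_n(p)=\E|C_1|^2+\E\sum_{i\ge 2}|C_i|^2$, bound $\E|C_1|^2$ by $O(\eps^2 n^2)$ using part (ii) in the strictly supercritical sub-range $np>1$ (and by the standard critical second-moment estimate $\E|C_1|^2=O(n^{4/3})$ otherwise), and bound the tail sum via the duality principle together with \eqref{eq:S-} applied to the dual $G_{n-|C_1|,p}$, whose subcriticality margin is of order $\eps$ by part (iii). This matches the form recorded in \cite{BCHSS1,JansonLuczak2008,LNNP2014,BR2014}.

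For part (ii), I would invoke the classical supercritical concentration of the giant component. Running a breadth-first exploration of $G_{n,p}$ and tracking the number of active vertices yields a random walk with $\Bin(\cdot,p)$-distributed increments having positive drift of order $\eps$ until a vertex fraction $\rho(\eps)$ has been absorbed, after which the drift reverses. Chernoff bounds on the binomial increments concentrate the length of this large excursion around $\rho(\eps)n$; converting the Chernoff exponent to the right scale in $\delta$ and $\eps$ produces the sub-Gaussian tail $\exp(-a\delta^2\eps^3 n)$. The precise statement we need is recorded in, e.g., \cite{JansonLuczak2008,BR2014}, with the earliest variants due to \cite{Bollobas1984,Luczak1990}.

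Part (iii) is a short analytic calculation. From $1-\rho(\eps)=e^{-(1+\eps)\rho(\eps)}$ one has $(1-\rho(\eps))(1+\eps)=(1+\eps)e^{-(1+\eps)\rho(\eps)}$, which is smooth in $\eps$, equals $1$ at $\eps=0$, and admits the Taylor expansion $1-\eps+O(\eps^2)$ as $\eps\to 0^+$. It is strictly below $1$ throughout $(0,A]$ (since $\rho(\eps)>\log(1+\eps)/(1+\eps)$ by convexity of $x\mapsto e^{-x}$), so compactness together with the Taylor expansion yields a uniform $c_0>0$ with $(1-\rho(\eps))(1+\eps)\le 1-c_0\eps$ for $\eps\in(0,A]$. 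Choosing $\delta\in(0,1/2)$ so small that $\delta\rho(\eps)(1+\eps)\le \tfrac12 c_0\eps$ uniformly on $(0,A]$ then yields the claim with $c=c_0/2$, while strict positivity of the left-hand side is immediate from $\rho(\eps)<1$.

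The hardest step will be part (ii): specifically, pinning down the sub-Gaussian exponent $\delta^2\eps^3 n$ uniformly over $\eps\in(0,A]$ and $\delta\in(0,1/2]$ with $\delta^2\eps^3 n\ge B$ requires a genuinely sharp exploration-process analysis, since a generic second-moment bound would yield only a power-law tail. The exponential form is in turn precisely what allows one to absorb the complement of the concentration event into lower-order corrections in the supercritical upper-bound argument for Theorem~\ref{thm:rough}. The remaining parts reduce to elementary branching-process estimates and analytic calculus.
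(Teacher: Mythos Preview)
Your treatment of \eqref{eq:S-}, part~\ref{T:RGb}, and part~\ref{T:RGc} matches the paper's: branching-process domination for the first, a citation (the paper uses \cite[Theorem~4]{BR2014}) for the second, and Taylor expansion near $\eps=0$ combined with compactness on $[\eps_0,A]$ for the third.

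For \eqref{eq:S+} the approaches diverge. The paper (Appendix~\ref{sec:susc}) gives a self-contained argument adapted from Kozma--Nachmias~\cite{KN2009,vdHN2012}: one proves by induction that $\Gamma(r):=\max_{G\subseteq K_n,\,v}\Pr(\partial B_{G_p}(v,r)\neq\emptyset)=O(1/r)$ for $r\le\lceil 1/\eps\rceil$, then splits $\goodchi_{K_n}(p)=\sum_w\Pr(v\leftrightarrow w)$ according to whether $\dist_{G_{n,p}}(v,w)\le 2r$; the near part is bounded by the expected ball size in the dominating branching process, the far part by $\Gamma(r)^2=O(\eps^2)$. This handles the entire range $\eps^3 n\ge 1$ uniformly and makes no appeal to parts~\ref{T:RGb}--\ref{T:RGc}.

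Your route---concentrate $|C_1|$ via~\ref{T:RGb}, remove it, and apply~\eqref{eq:S-} to the remainder with margin supplied by~\ref{T:RGc}---is a legitimate strategy well inside the supercritical phase, but as written it has a gap at the critical end. When $\eps^3 n$ is of bounded order, part~\ref{T:RGb} is unavailable (it requires $\delta^2\eps^3 n\ge B$), and you fall back on ``the standard critical second-moment estimate $\E|C_1|^2=O(n^{4/3})$''. Even granting that, your decomposition still needs $\E\sum_{i\ge2}|C_i|^2=O(n^{4/3})$, and the duality step does not deliver it here: for $np\le 1$ there is no giant to remove, so part~\ref{T:RGc} is vacuous, and for $np=1+\eps'$ with $(\eps')^3 n$ bounded the post-removal margin is only of order $\eps'$, which need not be comparable to $\eps$. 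What you actually need in this regime is $S_n(p)=O(n^{4/3})$ for $|np-1|=O(n^{-1/3})$, which is exactly \eqref{eq:S+} restricted to the critical window---so invoking it as ``standard'' begs the question. One can of course supply an independent proof of the critical case (self-correcting exploration walk, or indeed the Kozma--Nachmias lemma itself), but then the paper's unified treatment is no more work. A minor clean-up: since $S_n(p)$ is increasing in $p$, you may assume $np=1+\eps$ exactly, which removes the sub-range $np\in(1-\eps/2,1]$ from consideration altogether.
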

\begin{proof}
The subcritical upper bound \eqref{eq:S-} for the susceptibility
is simple and well-known.
The supercritical upper bound \eqref{eq:S+} is intuitively clear, since in
the supercritical range, the susceptibility ought to be dominated by
$\Enp|C_1|^2$ and $|C_1|$ is with high probability $\Theta(n\eps)$ when
$np=1+\eps$. However, we are unaware of a reference which contains a 
short proof of~\eqref{eq:S+}, and thus for completeness we give in 
Appendix~\ref{sec:susc} proofs of both upper 
bounds~\eqref{eq:S-}--\eqref{eq:S+} for the susceptibility. 

The tail bound~\eqref{C1:supcr:tail} follows 
from~\cite[Theorem~4, (10) and Remark 3]{BR2014}.

The estimate~\eqref{C1:supcr:alpha} follows 
for small $\eps$, say $\eps\le\eps_0$, from the fact that
$\rho(\eps)=2\eps+o(\eps)$ as 
$\eps\to0$,
and for $\eps\in[\eps_0,A]$ from the fact that (with $\rho=\rho(\eps)$)
$(1-\rho)(1+\eps)=-(1-\rho)\log(1-\rho)/\rho<1$ for $\eps>0$ together with
the continuity of $\rho(\eps)$.
(Cf.~e.g.~\cite[Lemma A.2]{JansonLuczak2008}.)
\end{proof}

\begin{corollary}\label{C:largep}
There are constants $n_0,\pi_0,b>0$ such that, for all $n \ge n_0$ and 
$p \in [0,1]$ satisfying $np \ge \pi_0$, we have 
$\Pr(|C_1| \le n/2) \le e^{-b n}$ and $S_n(p) \ge n^2/8$.
\end{corollary}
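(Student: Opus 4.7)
The plan is to reduce everything to a single value of $p$ via monotonicity, then apply the quantitative tail bound from Theorem~\ref{thm:RG}\ref{T:RGb} directly. Specifically, I would fix $\pi_0$ to be a suitable constant larger than $1$ (for concreteness, $\pi_0 = 2$), set $p_0 = \pi_0/n$, and observe that by the standard edge-coupling $G_{n,p_0} \subseteq G_{n,p}$ the largest component can only grow, so $\Pr_{n,p}(|C_1| \le n/2) \le \Pr_{n,p_0}(|C_1| \le n/2)$ whenever $p \ge p_0$. Simultaneously, Lemma~\ref{lem:Sn} gives $S_n(p) \ge S_n(p_0)$. It therefore suffices to establish both bounds at $p = p_0$.

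At $p = p_0 = 2/n$ we have $np_0 = 1+\eps$ with $\eps = 1$, and I would use the fact that the branching-process survival probability $\rho(1)$ is a fixed constant strictly greater than $1/2$. (This follows from $\rho$ being continuous and increasing with $\rho(\eps) \to 1$ as $\eps\to\infty$; one can check $\rho(\eps) > 1/2$ already for $\eps > 2\log 2 - 1 \approx 0.386$.) I then pick $\delta \in (0,1/2)$ small enough that $(1-\delta)\rho(1) > 1/2$ and apply Theorem~\ref{thm:RG}\ref{T:RGb} with $A = 1$ and this $\eps,\delta$. For $n \ge n_0$, the condition $\delta^2 \eps^3 n \ge B$ holds automatically, and the tail bound~\eqref{C1:supcr:tail} yields
\[
\Pr_{n,p_0}\bigl(|C_1| \le n/2\bigr) \le \Pr_{n,p_0}\bigl(|C_1| < (1-\delta)\rho(1)\,n\bigr) \le e^{-bn},
\]
where $b := a\delta^2 > 0$, giving the desired exponential bound on $\Pr(|C_1|\le n/2)$.

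The susceptibility bound is then a two-line consequence: since $S_n(p_0) = \E_{n,p_0}\sum_i |C_i|^2 \ge \E_{n,p_0}|C_1|^2 \ge (n/2)^2 \Pr_{n,p_0}(|C_1| > n/2)$, the tail bound just established gives $\Pr_{n,p_0}(|C_1|>n/2) \ge 1-e^{-bn} \ge 1/2$ for $n$ large enough, whence $S_n(p_0) \ge n^2/8$. I do not expect a real obstacle here: the corollary is essentially a packaging of Theorem~\ref{thm:RG}\ref{T:RGb} together with monotonicity. The only mildly delicate point is choosing $\pi_0$ large enough that $\rho(\eps)$ safely exceeds $1/2$, so that the tail bound concentrated around $\rho(\eps)n$ translates into the desired $n/2$ threshold; any $\pi_0 > 1 + (2\log 2 - 1) = 2\log 2$ works, and $\pi_0 = 2$ is a convenient choice.
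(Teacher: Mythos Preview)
Your proposal is correct and follows essentially the same approach as the paper: reduce to $p_0=\pi_0/n$ by monotonicity, apply the tail bound~\eqref{C1:supcr:tail} to get $\Pr(|C_1|\le n/2)\le e^{-bn}$, and deduce $S_n(p)\ge (n/2)^2\Pr(|C_1|>n/2)\ge n^2/8$. The only cosmetic difference is that the paper picks $\eps$ with $\rho(\eps)\ge 3/4$ (so any $\delta\le 1/3$ works), whereas you fix $\eps=1$ and choose $\delta$ small enough that $(1-\delta)\rho(1)>1/2$; both choices serve the same purpose.
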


\begin{proof}
Choose $\eps$ such that $\rho(\eps)\ge 3/4$ and let $\pi_0:=1+\eps$. 
Then the tail estimate~\eqref{C1:supcr:tail} and monotonicity yield 
$\Pr_{n,p}(|C_1| \le n/2) \le \Pr_{n,\pi_0/n}(|C_1| \le n/2) \le e^{-b n}$. 
The second conclusion follows from
$S_n(p) \ge \E |C_1|^2 \ge (n/2)^2 \Pr(|C_1| \ge n/2)$.
\end{proof}
We next prove two convenient auxiliary estimates.
\begin{lemma}\label{lem:sum:upper}
For all $n \ge 1$ and $p \in [0,1]$,  
\begin{align}
\label{eq:sum:upper} 
\E_{n,p}\Bigpar{\sum_{i \neq j}|C_i|^2 |C_j|^2} 
& \le S_n(p)^2 = \Bigl(\E_{n,p}\bigl(\sum_{i}|C_i|^2\bigr)\Bigr)^2 , \\
\label{eq:sum:upper:cross}
\E_{n,p}\Bigpar{\sum_{i,j} |C_i|^2|C_j|^2} & \le \Bigl( S_{n}(p) + 3 \bigl[n^{-1} S_n(p)\bigr]^4\Bigr) \cdot S_{n}(p) .
\end{align}
\end{lemma}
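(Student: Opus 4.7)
The plan is to expand both sums as four-fold sums of connection probabilities and bound them via the van den Berg--Kesten (BK) inequality. For part~\eqref{eq:sum:upper}, I would first write
\[
\sum_{i \neq j} |C_i|^2 |C_j|^2 = \sum_{v,w,x,y \in [n]} \indic{v \lra w, \: x \lra y, \: v \notarrow x},
\]
and observe that on $\{v \notarrow x\}$ the clusters $C(v)$ and $C(x)$ are disjoint, so any witnesses for $v \lra w$ and $x \lra y$ must use disjoint edges. BK then yields
\[
\Pr_{n,p}(v \lra w, x \lra y, v \notarrow x) \le \Pr_{n,p}(v \lra w) \, \Pr_{n,p}(x \lra y),
\]
and summing over $v,w,x,y \in [n]$, together with the identity $S_n(p) = \sum_{v,w} \Pr_{n,p}(v \lra w)$ coming from \eqref{eq:S:sum}, recovers \eqref{eq:sum:upper}.

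For part~\eqref{eq:sum:upper:cross}, the plan is to split
\[
\sum_{i,j} |C_i|^2 |C_j|^2 = \sum_i |C_i|^4 + \sum_{i \neq j} |C_i|^2 |C_j|^2
\]
and apply part~\eqref{eq:sum:upper} to the cross term. The identity $\sum_i |C_i|^4 = \sum_v |C(v)|^3$ combined with the vertex-transitivity of $K_n$ then reduces the remaining task to establishing $\E_{n,p}|C(v_0)|^3 \le 3 \goodchi_{K_n}(p)^5$ for any fixed $v_0 \in [n]$. Expanding this expectation as $\sum_{w_1,w_2,w_3 \in [n]} \Pr_{n,p}(v_0,w_1,w_2,w_3 \text{ in same cluster})$, I would invoke the four-point tree-graph inequality: the Steiner subtree on these four vertices has at most two internal branch points, yielding either a ``star'' topology (one branch vertex) or one of three ``caterpillar'' topologies (two branch vertices, one per leaf pairing $\{v_0,w_\alpha\}\,|\,\{w_\beta,w_\gamma\}$). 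Absorbing the star as the degenerate caterpillar case $u_1 = u_2$, BK gives
\[
\Pr_{n,p}(v_0,w_1,w_2,w_3 \text{ connected}) \le \sum_{\alpha=1}^{3} \sum_{u_1,u_2 \in [n]} \tau(v_0,u_1)\tau(u_1,w_\alpha)\tau(u_1,u_2)\tau(u_2,w_\beta)\tau(u_2,w_\gamma),
\]
where $\tau(a,b):=\Pr_{n,p}(a \lra b)$. Summing over $w_1,w_2,w_3$ and using $\sum_u \tau(a,u) = \goodchi_{K_n}(p)$ repeatedly, each of the three pairings contributes $\goodchi_{K_n}(p)^5$, so $\E_{n,p}|C(v_0)|^3 \le 3\goodchi_{K_n}(p)^5$. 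Multiplying by $n$ and rewriting via $\goodchi_{K_n}(p) = S_n(p)/n$ yields $\E_{n,p}\sum_i|C_i|^4 \le 3(n^{-1}S_n(p))^4 S_n(p)$; combining with part~\eqref{eq:sum:upper} gives \eqref{eq:sum:upper:cross}.

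The main obstacle is the Steiner tree case analysis underlying the four-point tree-graph inequality: one must verify that every connected configuration of the four vertices is captured by one of the three caterpillar unions (with the star topology absorbed as $u_1 = u_2$), which requires inspecting the (at most two) internal branch vertices of the Steiner subtree and choosing witnessing edge-disjoint paths. Once this geometric decomposition is in place, BK mechanically converts it into the product bound on two-point connection probabilities, and the remaining summations are routine.
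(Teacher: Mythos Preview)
Your proposal is correct. For part~\eqref{eq:sum:upper}, however, you take a different route from the paper: the paper conditions on the component $C(v)$, observes that the remaining graph is distributed as $G_{n-|C(v)|,p}$, and then applies the monotonicity $S_{n-|C(v)|}(p)\le S_n(p)$ (Lemma~\ref{lem:Sn}) to obtain
\[
\E_{n,p}\Bigl(|C(v)|\sum_{w\notin C(v)}|C(w)|\;\Bigm|\;C(v)\Bigr)=|C(v)|\,S_{n-|C(v)|}(p)\le |C(v)|\,S_n(p),
\]
which after summing over $v$ gives the result without any correlation inequality. Your BK route is equally valid and arguably more in the spirit of percolation theory; the paper's conditioning argument is more elementary (it needs only the spatial Markov property of $G_{n,p}$ and a trivial monotonicity) and generalizes slightly more cleanly to the higher-moment bounds used later in~\eqref{bound:sum:sq:q}. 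For part~\eqref{eq:sum:upper:cross} the two approaches coincide in substance: the paper simply quotes the Aizenman--Newman tree-graph inequality $\E_{n,p}|C(v)|^k\le(2k-3)!!\,\goodchi_{K_n}(p)^{2k-1}$ as a black box, whereas you re-derive the $k=3$ case via the Steiner-tree decomposition and BK, which is precisely how Aizenman and Newman prove it.
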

\begin{proof}
We start with~\eqref{eq:sum:upper} and fix any vertex $v \in [n]$. 
Conditioning on the vertex set of~$C(v)$ in~$G_{n,p}$, 
the remaining graph with vertex set $[n]\setminus C(v)$ has the same 
distribution as $G_{n-|C(v)|,p}$ (up to relabeling of the vertices). 
Since $S_{n-|C(v)|}(p) \le S_n(p)$ by Lemma~\ref{lem:Sn}, 
using~\eqref{def:S} it follows that 
\begin{equation}\label{logder:upper:cond}
\begin{split}
\Enp\Bigl(|C(v)| \sum_{w \not\in C(v)} |C(w)|\;\Bigm|\;C(v)\Bigr)
& = |C(v)| \cdot S_{n-|C(v)|}(p) \le  |C(v)| \cdot S_n(p).
\end{split}
\end{equation}
Taking the expectation and summing over all vertices~$v \in [n]$,
we obtain, recalling~\eqref{eq:Q:sum} and \eqref{def:S}--\eqref{def:Sn},
\begin{equation}\label{logder:upper:cond2}
\begin{split}
\E_{n,p}\Bigpar{\sum_{i \neq j}|C_i|^2 |C_j|^2} =
\Enp\Bigl(\sum_{v\in[n]} |C(v)| \sum_{w \not\in C(v)} |C(w)|\Bigr)
\le \Enp \Bigpar{\sum_{v\in[n]} |C(v)|} \cdot S_n(p)= S_n(p)^2,
\end{split}
\end{equation}
which is
\eqref{eq:sum:upper}. 

For~\eqref{eq:sum:upper:cross} we rely on the classical
tree--graph inequalities~\cite[(5.3)--(5.4)]{AizenmanNewman1984} of Aizenman 
and Newman from~1984 (see also~\cite[(6.85)--(6.96)]{Grimmett1999} for a 
modern exposition). As noted in~\cite[p.~123]{AizenmanNewman1984}, their 
proofs apply directly to percolation on any finite transitive graph. 
For any integer $k \ge 1$ and vertex~$v \in [n]$, these inequalities 
state (in our notation) that 
\begin{equation}\label{eq:treegraph} 
\E_{n,p}\bigl(|C(v)|^k\bigr) \le (2k-3)!! \cdot \bigl(\E_{n,p}|C(v)|\bigr)^{2k-1} .
\end{equation}
Recalling $S_{n}(p)=n\E_{n,p}|C(v)|$, see~\eqref{Sn-chi}, by 
summing~\eqref{eq:treegraph} with $k=3$ over all vertices~$v \in [n]$ 
we infer 
\[
\E_{n,p}\Bigpar{\sum_{i}|C_i|^4} = \sum_{v \in [n]}\E_{n,p}\bigl(|C(v)|^3\bigr) \le 3 \cdot \bigl[n^{-1}S_n(p)\bigr]^{5} \cdot n ,
\]
which together with~\eqref{eq:sum:upper}
establishes~\eqref{eq:sum:upper:cross}.
\end{proof}
\begin{proof}[Proof of~\eqref{rough:upper} of Theorem~\ref{thm:rough}]
We shall distinguish five (somewhat overlapping) ranges of~$np$ that will be
treated separately.
We begin by noting that~\eqref{eq:SQ:sum} and~\eqref{eq:sum:upper} together imply
\begin{equation}\label{eq:logder:upper}
\frac{\dx}{\dx p} \log S_n(p) \le \frac{S_n(p)}{1-p} ,
\end{equation}
which will be useful in the subcritical and critical cases.

Let $\pi_0$ and $b$ be as in  Corollary \ref{C:largep} and
pick $A \ge \max\{\pi_0,2\}$ such that $exe^{-x/2} \le 1/2$ for $x \ge A$. 
Let $a,B,c>0$ and $\delta \in (0,1/2]$ be the 
constants given in Theorem~\ref{thm:RG}\ref{T:RGb}--\ref{T:RGc}. 
We set $\Lambda := \max\{(B/\delta^2)^{1/3},1\}$, and 
henceforth assume that $n$ is large enough whenever necessary. 
(This is no loss of generality since 
\eqref{def:S}--\eqref{def:Sn} 
and~\eqref{eq:S:der} imply $S_n(p) \ge n$ 
and $\frac{\dx}{\dx p}S_n(p) \le n^4$ 
while $\min\{|p-1/n|^{-1},n^{4/3}\} \ge 1$
for every $n \ge 1$, 
and thus
\eqref{rough:upper} trivially holds for
any fixed $n$ if $C$ is large enough.)

\pfcase{$np = 1-\eps$ with $\eps^3 n \ge 1$.} 
By~\eqref{eq:S-}, 
\begin{equation}\label{mk}
S_{n}(p) \le \eps^{-1}n = |p-1/n|^{-1}.
\end{equation}
Since~$p \le 1/2$ (for $n \ge 2$) 
and $|p-1/n|^{-1} \le n^{4/3}$, 
now~\eqref{eq:logder:upper} and \eqref{mk}
imply \eqref{rough:upper}. 

\pfcase{$|np-1| \le \Lambda n^{-1/3}$.} 
Noting that $np \le 1+\eps$ with $\eps=\Lambda n^{-1/3}$ and using 
the supercritical upper bound~\eqref{eq:S+} for $S_{n}(p)$ it 
follows that 
\[
S_n(p) \le D \eps^2 n^2 = D \Lambda^{2} \cdot n^{4/3} \le D \Lambda^3 \cdot |p-1/n|^{-1} .
\] 
Since~$p \le 1/2$ (for $n \ge 4\Lambda$, say), now~\eqref{eq:logder:upper} 
implies~\eqref{rough:upper}. 

\pfcase{$np = 1+\eps$ with $\Lambda n^{-1/3} \le \eps \le A$.} 
This is a more difficult range. We shall be guided by the
so-called 'symmetry rule', which intuitively states the following: 
after removing the largest component from the supercritical random graph 
$G_{n,p}$ with $np=1+\eps$, the remaining graph resembles a subcritical 
random graph $G_{n',p}$ with suitable $n'$ and $n'p=1-\eps'$, 
see \cite[Section 5.6]{JLR}. 

Let 
\begin{equation}
  \label{gaeps}
\alpha(\eps) = (1-\delta)\rho(\eps) ,
\end{equation} 
so that $(n-\alpha(\eps)n) \cdot p \le 1-c\eps$ 
by~\eqref{C1:supcr:alpha}. Using the subcritical 
estimate~\eqref{eq:S-} of Theorem~\ref{thm:RG}, 
it follows that for $D_1:=c^{-1}$ we have  
\begin{equation}\label{eq:S:duality}
S_{\floor{n-\alpha(\eps) n}}(p) \le (c \eps)^{-1} n = D_1 \eps^{-1} n .
\end{equation}

Note that $|C_1| \le \alpha(\eps) n$ is a decreasing event, and 
that $S(G_{n,p}) = \sum_i|C_i|^2$ and thus $S(G_{n,p})^2 =
\sum_{i,j}|C_i|^2|C_j|^2$ 
are increasing functions of the edge indicators. 
By Harris's inequality (a special case of the FKG-inequality), 
it follows that 
\begin{equation}\label{rough:11}
\begin{split}
\E\Bigpar{\indic{|C_1| \le \alpha(\eps) n}\sum_{i,j} |C_i|^2|C_j|^2}
&\le \Pr\bigpar{|C_1| \le \alpha(\eps)n} \cdot 
\E\Bigpar{\sum_{i,j} |C_i|^2|C_j|^2}.
\end{split}
\end{equation}
Combining \eqref{rough:11} with
\eqref{gaeps}, the tail
estimate~\eqref{C1:supcr:tail} and the
inequality~\eqref{eq:sum:upper:cross}, using the 
upper bound~\eqref{eq:S+} for $S_{n}(p)$, it follows that 
\begin{equation}\label{rough:12}
\begin{split}
\E\Bigpar{\indic{|C_1| \le \alpha(\eps) n}\sum_{i,j} |C_i|^2|C_j|^2}
&\le e^{-a \delta^2\eps^3n} \cdot \Bigl(D \eps^2 n^2 + 3\bigl[D \eps^2n\bigr]^4\Bigr) \cdot S_{n}(p) = O(\eps^{-1}n) \cdot S_{n}(p), 
\end{split}
\end{equation}
where we used $e^{-x} (x+x^3) \le 2$ for the last inequality (and that
$a,\delta,D$ are constants).

Conditioning on (the vertex set of) the largest component~$C_1$ of
$G_{n,p}$, the 
remaining graph with vertex set $[n]\setminus C_1$ has the same distribution as 
$G_{n-|C_1|,p}$ conditioned on the event $\cD_{C_1}$ that all components have 
size at most~$|C_1|$ and that there is no component of size exactly $|C_1|$
with a smaller vertex label than $C_1$. 
Similarly to~\eqref{logder:upper:cond}, it follows that
\begin{equation}\label{rough:13}
\E\Bigl(|C_1|^2 \sum_{i \ge 2} |C_i|^2\;\Bigm|\;C_1\Bigr) 
=|C_1|^2 \cdot 
\E\bigl(S(G_{n-|C_1|,p}) \;\bigm|\; C_1,\, \cD_{C_1}\bigr).
\end{equation}
For any given $C_1$, $\cD_{C_1}$ is a decreasing event 
for the random graph $G_{n-|C_1|,p}$, while $S(G_{n-|C_1|,p})$ 
is an increasing function.
Hence, as in \eqref{rough:11}, 
by Harris's inequality, 
it follows that
\begin{equation}  \label{rough:13.5}
  \E\bigl(S(G_{n-|C_1|,p}) \bigm| C_1,\, \cD_{C_1}\bigr)
\le \E\bigl(S(G_{n-|C_1|,p}) \bigm| C_1\bigr)
=S_{n-|C_1|}(p).
\end{equation}
By \eqref{rough:13}--\eqref{rough:13.5} and
the monotonicity of Lemma~\ref{lem:Sn} 
together with  \eqref{eq:S:duality}, 
we infer 
\begin{equation}\label{rough:14.1}
\begin{split}
\Enp\Bigpar{\indic{|C_1| \ge \alpha(\eps) n}|C_1|^2 \sum_{i \ge 2} |C_i|^2
\Bigm| C_1}  
& \le 
\indic{|C_1| \ge \alpha(\eps) n}|C_1|^2 S_{n-|C_1|}(p) \\
& \le S_{\floor{n-\alpha(\eps) n}}(p)\cdot |C_1|^2
\le D_1 \eps^{-1}n|C_1|^2
\end{split}
\end{equation}
and thus, by taking the expectation and using
\eqref{def:Sn},
\begin{equation}\label{rough:14.2}
\Enp\Bigpar{\indic{|C_1| \ge \alpha(\eps) n}|C_1|^2 \sum_{i \ge 2} |C_i|^2}  
\le D_1 \eps^{-1}n \Enp|C_1|^2
\le D_1 \eps^{-1}n S_n(p).
\end{equation}

Similarly to~\eqref{rough:13}--\eqref{rough:14.2}, 
by combining~\eqref{eq:sum:upper:cross} with the upper 
bound~\eqref{eq:S:duality} for $S_{\floor{n-\alpha(\eps) n}}(p)$, we deduce 
\begin{equation}\label{rough:2}
\begin{split}
\E\Bigpar{\indic{|C_1| \ge \alpha(\eps) n} \sum_{i,j \ge 2}|C_i|^2|C_j|^2}
& \le \E_{\floor{n-\alpha(\eps) n},p}\Bigpar{\sum_{i,j}|C_i|^2|C_j|^2} \\
& \le \Bigl(D_1 \eps^{-1}n + 3 \bigl[D_1 \eps^{-1}\bigr]^4\Bigr) 
\cdot S_{\floor{n-\alpha(\eps) n}}(p) = O(\eps^{-1}n) \cdot  S_{n}(p) ,
\end{split}
\end{equation}
where we used $\eps^3 n \ge\Lambda^3 \ge 1$ and 
$S_{\floor{n-\alpha(\eps) n}}(p) \le S_{n}(p)$ (see Lemma~\ref{lem:Sn}) 
for the final inequality. 

In view of~\eqref{eq:SQ:sum}, using $p \le 1/2$ (for $n \ge 2(1+A)$, say) our 
estimates \eqref{rough:12}, \eqref{rough:14.2} and \eqref{rough:2} imply 
\begin{equation*}
  \frac{\dx}{\dx p}S_n(p) = O(\eps^{-1} n) \cdot S_n(p),
\end{equation*}
which due to $\eps^{-1} n = |p-1/n|^{-1}$
and $|p-1/n|^{-1} \le n^{4/3}/\Lambda$ 
yields \eqref{rough:upper} in this case too.  

\pfcase{$A \le np \le n/2$.}  
In this range many technicalities from the previous case simplify.  
By distinguishing the events $|C_1| \le n/2$ 
and $n/2 < |C_1| \le n$ (in which case $|C_2| \le n - |C_1| < n/2$), 
using $\sum_i |C_i|=n$ we infer
\begin{equation}\label{tripp}
\sum_{i \neq j}|C_i|^2|C_j|^2 
\le \indic{|C_1| \le n/2} n^4 + 2 n^2 \sum_{i}\indic{|C_i| \le n/2}|C_i|^2 .
\end{equation}
As $enpe^{-np/2} \le 1/2$ by the choice of $A$, 
standard component counting arguments 
from random graph theory and Stirling's formula ($k! \ge \sqrt{2 \pi k} (k/e)^k$) yield 
\begin{equation}\label{trapp}
\begin{split}
\E \Bigl( \sum_{i}\indic{|C_i| \le n/2}|C_i|^2 \Bigr) & \le \sum_{1 \le k \le n/2} k^2 \cdot \binom{n}{k}k^{k-2}p^{k-1}(1-p)^{k(n-k)} \\
& \le \sum_{k \ge 1}\frac{(knp)^k e^{-knp/2}}{k!\,p }  
\le \frac{1}p\sum_{k \ge 1}\frac{\bigl(enp e^{-np/2}\bigr)^k}{\sqrt{2\pi k}} 
\le \frac{1}{p} .
\end{split}
\end{equation}
Since $np \ge \pi_0$, by Corollary \ref{C:largep} we see that 
for large $n$ we also have
\begin{equation}\label{trull}
\E( \indic{|C_1| \le n/2} n^4) = \Pr(|C_1| \le n/2) \cdot n^4 
\le n^4e^{- b n}=O(1).
\end{equation}
Inserting \eqref{tripp}--\eqref{trull} into~\eqref{eq:SQ:sum}
and
using $(1-p)^{-1} \le 2$ we obtain 
\begin{equation}
\frac{\dx}{\dx p} S_n(p) = O\bigpar{n^2 p^{-1}}.  
\end{equation}
Since $S_n(p) \ge n^2/8$ by Corollary \ref{C:largep},
this yields $\frac{\dx}{\dx p} \log S_n(p) = O( p^{-1})$,
which establishes~\eqref{rough:upper}
because now $p^{-1} \le |p-1/n|^{-1}$ and $p^{-1} = O(n)=O(n^{4/3})$. 

\pfcase{$(\log n)^2 \le np < n$.} 
This is a less interesting range since with very high probability, $G_{n,p}$
is connected and thus $\sum_i|C_i|^2=|C_1|^2=n^2$. To obtain rigorous estimates,
let $\cE$ denote the monotone increasing event that $G_{n,p}$ is 
$2$-edge connected (after deleting any edge the resulting 
graph remains connected). It is well-known that 
$\Pr_{n,2(\log n)/n}(\neg\cE)=o(1)$ holds (see, e.g.,~\cite{ER1961con}), 
so a multi-round exposure argument yields 
$\Pr_{n,p}(\neg\cE) \le 
\Pr_{n,2\log n/n}(\neg\cE)^{\floor{np/2\log n}} \le 
n^{-\omega(1)}$.  
Observe that if $\cE$ holds, then no edge can be pivotal for 
the event $v \leftrightarrow w$. Using~\eqref{eq:S:der} we infer 
\[
\frac{\dx}{\dx p} S_n(p) \le n^4 \cdot \Pr_{n,p}(\neg\cE) \le n^{-\omega(1)},
\] 
which together with $S_n(p) \ge 1$ and $|p-1/n|\ge 1/n$ 
completes the proof of~\eqref{rough:upper}.  
\end{proof}

\subsection{Lower bound}\label{sec:maxder:crit}
In this subsection we focus on the lower bound~\eqref{rough:lower} in
Theorem~\ref{thm:rough}. 
Our proof strategy is to consider the event that $G_{n,p}$ contains two
distinct components of size $\Theta(n^{2/3})$.  
\begin{lemma}\label{Lch}
  Let $\cL$ be the event that $|C_2|\ge n^{2/3}$, i.e., 
that $G_{n,p}$ contains two distinct components
with at least $n^{2/3}$ vertices each.
For every $\lambda\in\mathbb R$ there  
exist constants $\delta_\lambda,n_0>0$ such that, for all $n \ge n_0$,
if  $p=1/n+ \lambda n^{-4/3}$, then
\begin{equation}
  \label{magnus}
\Pr_{n,p}(\cL)\ge\gd_\lambda.
\end{equation}
\end{lemma}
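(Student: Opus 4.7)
My plan is to derive \eqref{magnus} from the classical convergence of the rescaled component sizes in the critical window. For $p = 1/n + \lambda n^{-4/3}$, Aldous's theorem provides the joint distributional convergence
\[
\bigl(|C_1|/n^{2/3}, |C_2|/n^{2/3}, \ldots\bigr) \dto \bigl(\gamma_1(\lambda), \gamma_2(\lambda), \ldots\bigr),
\]
where the $\gamma_i(\lambda)$ are the ordered lengths of the excursions above past minima of the drifted Brownian motion $W^\lambda(s) := W(s) + \lambda s - s^2/2$. This is essentially the same convergence that the paper will invoke in Section~\ref{sec:convergence} to establish Theorem~\ref{thm:main2}. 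Extracting the second marginal gives $|C_2|/n^{2/3} \dto \gamma_2(\lambda)$, where $\gamma_2(\lambda)$ is almost surely strictly positive and finite.

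Next, I would show that $p_\lambda := \Pr\bigl(\gamma_2(\lambda) > 1\bigr) > 0$. This is a standard feature of the excursion structure of $W^\lambda$: the second-longest excursion length has a continuous distribution on $(0,\infty)$ whose support is unbounded above, since excursions of arbitrarily large length occur with positive probability. Applying the Portmanteau theorem to the open half-line $(1,\infty)$ then yields
\[
\liminf_{n \to \infty} \Pr_{n,p}(\cL) \;\geq\; \liminf_{n \to \infty} \Pr_{n,p}\bigl(|C_2|/n^{2/3} > 1\bigr) \;\geq\; p_\lambda \;>\; 0,
\]
so it suffices to set $\delta_\lambda := p_\lambda/2$ and choose $n_0$ large enough.

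The only real obstacle is rigorously justifying $p_\lambda > 0$. If we prefer not to quote continuity and strictly positive density properties of $\gamma_2(\lambda)$ from the literature, a self-contained lower bound can be produced by constructing an explicit positive-probability event on which $W^\lambda$ admits two disjoint excursions of length at least~$1$ — for instance, by forcing one excursion on a suitable sub-interval of $[0,2]$ and another on a sub-interval of $[3,5]$, and bounding each probability away from zero by elementary Brownian motion estimates for the drifted process on each piece.
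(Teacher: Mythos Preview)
Your proposal is correct and takes essentially the same approach as the paper: both derive the lemma directly from Aldous's convergence theorem for the rescaled component sizes in the critical window (the paper simply cites \cite[Corollary~2]{Aldous1997} in one line, while you unpack the Portmanteau step and the positivity of $\Pr(\gamma_2(\lambda)>1)$ explicitly).
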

\begin{proof}
This follows immediately from \cite[Corollary 2]{Aldous1997}. 
(See also \cite[Theorem~5.20]{JLR}, there stated for $G(n,m)$.)
\end{proof}

\begin{proof}[Proof of~\eqref{rough:lower} of Theorem~\ref{thm:rough}]
As for the upper bound, we may assume that $n$ is large enough, since 
\eqref{rough:lower} trivially holds (if $D_\lambda$ is chosen small enough)
for every fixed $n\ge2$ because
$S_n(p)$ and $\frac{\dx}{\dx p}S_n(p)$ are positive functions on
$(0,1)$.

With $\cL$ as in Lemma \ref{Lch}, we have
\[
\sum_{i \neq j}|C_i|^2 |C_j|^2  \ge \indic{\cL} \bigl(n^{2/3}\bigr)^4,
\]
and thus by 
\eqref{eq:S:der:Q}--\eqref{eq:Q:sum} and \eqref{magnus}
\begin{equation}
\frac{\dx}{\dx p} S_n(p)
\ge \E_{n,p}\Bigpar{\sum_{i \neq j}|C_i|^2 |C_j|^2}
\ge \Pr_{n,p}(\cL) n^{8/3}
 \ge \delta_\lambda \cdot n^{8/3}  .
\end{equation}
By \eqref{eq:S+} 
(with $\eps:=\max\set{1,\gl}n^{-1/3}$)
we also know that 
$p=1/n+ \lambda n^{-4/3}$ implies $S_n(p) \le C_{\lambda} n^{4/3}$,
establishing~\eqref{rough:lower} with~$D_\lambda = \delta_{\lambda}/C_\lambda$ 
for $n$ sufficiently large.
\end{proof}

\begin{remark} 
  Although we have stated \eqref{rough:lower} and Lemma \ref{Lch} for a
  fixed $\gl$, the results hold uniformly for $\gl$ in any compact interval,
  i.e., we can take $D_\gl$ and $\delta_\gl$ independent of
  $\gl\in[-\Lambda,\Lambda]$ for any $\Lambda>0$, provided we assume for
  example $n\ge2\max\set{1,\Lambda^3}$ (to guarantee that $p\in(0,1)$).
This follows from the more refined Theorem \ref{thm:main2}, but it can also
be seen from the simple proof above by noting that
the result in \cite[Corollary~2]{Aldous1997}, although stated for 
$p=1/n+ \lambda n^{-4/3}$ for a fixed $\lambda$, also holds (by the same
proof)
more generally for
$p=1/n+ (\lambda+o(1)) n^{-4/3}$;
it then follows from Lemma \ref{Luc} below that 
for $\gl\in[-\Lambda,\Lambda]$,
$\Pr_{n,n^{-1}+\gl n^{-4/3}}(\cL)$ converges uniformly to a continuous
positive function, which
yields a uniform lower bound in \eqref{magnus}, and thus in \eqref{rough:lower}.
\end{remark}

\section{Scaling inside the critical window}\label{sec:scaling}
In this section we prove Theorem~\ref{thm:main2}.
Our arguments exploit that inside the critical window, the rescaled sizes of
the largest components converge to some random variables (as mentioned in
the introduction).

Following \cite{JansonSpencer2007}, we 
define
\begin{align}\label{def:Lambda}
\Lambda^{(\lambda)}(x) &:= (2\pi)^{-1/2} x^{-5/2}e^{-F(x,\lambda)}\sum_{\ell
  \ge 0} w_{\ell} x^{3\ell/2} , 
\end{align}
where
\begin{align}
\label{def:Fxl}
F(x,\lambda) &:= \bigl((x-\lambda)^3+\lambda^3\bigr)/6 ,
\end{align}
and $w_\ell$, $\ell\ge0$, are Wright's constants
\cite{Wright1977},
which as shown by Spencer \cite{Spencer1997}  can be expressed as 
\begin{equation}
  \label{def:wl}
w_{\ell}= \E(\Bex^{\ell})/\ell!, 
\end{equation}
where the random variable~$\Bex$ is the area under a normalized Brownian
excursion, see also the survey~\cite{Janson2007}. 
As shown in \cite[Theorem~4.1]{JansonSpencer2007},
$\Lambda^{(\lambda)}(x)$ is the intensity of the point process that 
by \cite{Aldous1997} describes asymptotically the sequence 
$(|C_i|/n^{2/3})_{i\ge1}$, and we define the
corresponding moments
\begin{equation}\label{def:fk}
f_k(\lambda) = \int_{0}^{\infty} x^k \Lambda^{(\lambda)}(x) \dx x,
\qquad k\ge2.
\end{equation}
As remarked in 
\cite[after Corollary~4.2]{JansonSpencer2007},
$\Lambda^{(\lambda)}(x)$ decreases exponentially as $x\tooo$, and is $\Theta
\bigpar{x^{-5/2}}$ as $x\to0$; hence the integral \eqref{def:fk} converges
so $0<f_k(\lambda)<\infty$ for every $k\ge2$ and $\lambda\in\mathbb R$.

By~\eqref{def:Fxl} we have  $\frac{\partial}{\partial\lambda} F(x,\lambda) = -x^2/2 +
\lambda x$ and thus by \eqref{def:Lambda}
$\frac{\partial}{\partial\gl}\gLL(x)=\bigpar{\frac{x^2}2-\gl  x}\gLL(x)$.
Hence, by differentiating inside the integral in \eqref{def:fk} (which is
easily justified, e.g.\ using dominated convergence),
$f_k(\gl)$ is differentiable and
\begin{equation}\label{eq:fk:der}
\frac{\dx}{\dx \lambda} f_k(\lambda) 
= \int_{0}^{\infty} x^k \frac{\partial \Lambda^{(\lambda)}(x)}{\partial \gl} \dx x
= \int_{0}^{\infty} x^k \Bigpar{\frac{x^2}2-\gl x}\Lambda^{(\lambda)}(x) \dx x
= \frac12 f_{k+2}(\lambda) - \lambda f_{k+1}(\lambda) .
\end{equation}
By induction, $f_k(\gl)$ is infinitely differentiable for every $k\ge2$.

Recall now \eqref{def:Sn}, and note that \eqref{eq:SQ:sum} can be written as
\begin{equation}\label{SnCiDer}
\frac{\dx}{\dx p}S_n(p) 
= \frac{\E_{n,p} \bigpar{\bigl(\sum_i |C_i|^2\bigr)^2}}{1-p} 
- \frac{\E_{n,p} \bigpar{\sum_i |C_i|^4}}{1-p}.
\end{equation}
To treat such sums, we first
note the following fact, which is stated in \cite[Theorem~B1 and 
Remark~B2]{JansonLuczak2008} as an immediate consequence of 
results of Aldous~\cite{Aldous1997} and Janson and
Spencer~\cite{JansonSpencer2007}. 
\begin{lemma}[\cite{Aldous1997,JansonSpencer2007,JansonLuczak2008}]\label{lem:conv}
Let $\lambda \in \RR$ and $k\in \NN$ with $k \ge 2$.
Then there exists a random variable $W_{\lambda,k}$ 
with 
\begin{equation}\label{eq:conv:exp}
\E W_{\lambda,k} = f_k(\lambda) ,
\end{equation}
such that for $p = 1/n+(\lambda + o(1))n^{-4/3}$ we have 
\begin{equation}
\label{eq:conv:sum}
\frac{\sum_{i}|C_i|^k}{n^{2k/3}} \dto W_{\lambda,k}.
\end{equation}
\qed
\end{lemma}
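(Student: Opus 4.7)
The plan is to derive both parts of Lemma~\ref{lem:conv} from two well-established inputs: Aldous's joint convergence theorem for rescaled component sizes \cite{Aldous1997} and the Janson--Spencer identification of the intensity of the limiting point process \cite{JansonSpencer2007}. Once those are in hand, the continuous mapping theorem and Campbell's formula for point process intensities will yield \eqref{eq:conv:sum} and \eqref{eq:conv:exp} respectively, with no further random graph input.

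First I would invoke \cite{Aldous1997} to conclude that, for $p=1/n+(\lambda+o(1))n^{-4/3}$, the sequence $(|C_i|/n^{2/3})_{i\ge1}$ converges in distribution, in the $\ell^2_\searrow$ topology on decreasing nonnegative square-summable sequences, to an $\ell^2_\searrow$-valued random element $(\xi_i^{(\lambda)})_{i\ge1}$. I would then define $W_{\lambda,k}:=\sum_{i\ge1}\bigpar{\xi_i^{(\lambda)}}^{k}$, which is a.s.\ finite for every $k\ge2$ since $(\xi_i^{(\lambda)})\in\ell^2$ a.s.\ and $\sum_i\xi_i^k\le\xi_1^{k-2}\sum_i\xi_i^2$.

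To obtain \eqref{eq:conv:sum} I would apply the continuous mapping theorem to the functional $\Phi_k\colon\ell^2_\searrow\to[0,\infty)$ defined by $\Phi_k(x):=\sum_i x_i^k$. For $k=2$ this is the squared $\ell^2$ norm, which is continuous. For $k>2$ continuity follows from coordinatewise convergence $x_i^{(m)}\to x_i$ (forced by $\ell^2$ convergence) together with the dominating bound $(x_i^{(m)})^k\le(x_1^{(m)})^{k-2}(x_i^{(m)})^2$ and the convergence $\sum_i(x_i^{(m)})^2\to\sum_i x_i^2$, via a generalized dominated convergence argument. This delivers $\sum_i|C_i|^k/n^{2k/3}=\Phi_k\bigpar{(|C_i|/n^{2/3})_i}\dto\Phi_k\bigpar{(\xi_i^{(\lambda)})_i}=W_{\lambda,k}$.

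For \eqref{eq:conv:exp} I would use \cite[Theorem~4.1]{JansonSpencer2007}, which identifies the intensity measure of the limit point process $\{\xi_i^{(\lambda)}\}_{i\ge1}$ on $(0,\infty)$ as $\gLL(x)\dd x$. Campbell's formula then yields $\E W_{\lambda,k}=\E\sum_i\bigpar{\xi_i^{(\lambda)}}^k=\int_0^\infty x^k\gLL(x)\dd x=f_k(\lambda)$, with finiteness of the integral noted immediately after \eqref{def:fk}. The main obstacle sits entirely in the external inputs: establishing $\ell^2_\searrow$-convergence (rather than merely coordinatewise convergence) of the component-size sequence rests on Aldous's exploration-walk analysis of the near-critical multiplicative coalescent, and identifying the explicit intensity $\gLL(x)$ requires combining Wright's enumeration of connected graphs with $\ell$ excess edges with the moment formula $w_\ell=\E(\Bex^\ell)/\ell!$ for the Brownian excursion area. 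Granted those ingredients, the lemma reduces to the continuous-mapping and intensity calculations sketched above.
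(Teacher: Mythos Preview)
Your proposal is correct and matches the paper's approach: the paper does not give its own proof but simply cites \cite[Theorem~B1 and Remark~B2]{JansonLuczak2008}, which in turn packages exactly the two inputs you invoke---Aldous's $\ell^2_\searrow$-convergence \cite{Aldous1997} and the Janson--Spencer intensity identification \cite{JansonSpencer2007}---combined via the continuous mapping theorem and Campbell's formula. Your sketch is precisely the argument underlying those citations, with the continuity of $\Phi_k$ on $\ell^2_\searrow$ for $k\ge2$ handled correctly by the Pratt-type dominated convergence you describe.
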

In Section~\ref{sec:convergence} we justify taking expectations, higher
moments and 
derivatives in~\eqref{eq:conv:sum}, and use this to establish the
convergence results~\eqref{main:conv}--\eqref{main:der} of
Theorem~\ref{thm:main2} with $f=f_2$.
(In Appendix~\ref{AppAhigher} we extend this argument to 
higher derivatives.) 
Finally, in Section~\ref{sec:zero} we complete the proof of
Theorem~\ref{thm:main2} by showing 
$\frac{\dx \log f_2}{\dx \lambda}(0) \neq 0$  
numerically via a series expansion (that converges exponentially).

\subsection{Convergence}\label{sec:convergence}
In this subsection we prove the convergence results~\eqref{main:conv}--\eqref{main:der} of Theorem~\ref{thm:main2}
using the distributional convergence~\eqref{eq:conv:sum} from
Lemma~\ref{lem:conv} and the following auxiliary result. 
\begin{theorem}%
\label{thm:scaling}
Let $D,\lambda \in \RR$ and $k,q \in \NN$ with $k \ge 2$ and $q \ge 1$. 
\begin{romenumerate}
\item \label{thm:scaling-i}
There exists  $C=C(D,k,q)$ such that, for all $n \ge 1$ and $p \in (0,1)$ 
satisfying $p \le 1/n + D n^{-4/3}$,  
\begin{equation}\label{bound:sum:sq}
0 \le \E_{n,p}\lrpar{ \biggl(\frac{\sum_{i}|C_i|^k}{n^{2k/3}}\biggr)^q}\le C .
\end{equation}
\item \label{thm:scaling-ii}
For $p = 1/n + (\lambda + o(1)) n^{-4/3}$ we have 
\begin{equation}\label{eq:conv:sum:sq}
\E_{n,p}\lrpar{\biggl(\frac{\sum_{i}|C_i|^k}{n^{2k/3}}\biggr)^q} 
\: \xrightarrow{n\to\infty} \: \E \bigpar{\lrpar{W_{\lambda,k}}^q}<\infty , 
\end{equation}
\end{romenumerate}
where the random variable $W_{\lambda,k}$ is defined as in Lemma~\ref{lem:conv}.
Moreover, the limit in \eqref{eq:conv:sum:sq} is a continuous function of
$\lambda$, and 
if $p=1/n+\lambda n^{-4/3}$, then
the convergence in \eqref{eq:conv:sum:sq} is uniform for 
$\lambda$ in any compact interval $[\lambda_1,\lambda_2]\subset\RR$.
\end{theorem}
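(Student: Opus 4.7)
The plan is to prove (i) first, by a monotonicity argument that reduces the problem to a single value of $p$ in the critical window, and then to derive (ii) from (i) by combining Lemma \ref{lem:conv} with uniform integrability.

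For (i), the key observation is that $\sum_i |C_i|^k$ is an edge-monotone functional of $G_{n,p}$ when $k\ge 2$: merging two components of sizes $a$ and $b$ replaces $a^k+b^k$ by $(a+b)^k\ge a^k+b^k$. Hence $\bigpar{\sum_i |C_i|^k}^q$ is also edge-monotone for $q\ge 1$, and by a standard monotone coupling its expectation is nondecreasing in $p$. Under the hypothesis $p\le 1/n+Dn^{-4/3}$ this reduces the problem to bounding the moment at $p_D:=1/n+Dn^{-4/3}$. At $p_D$, I would use the identity $\sum_i|C_i|^k=\sum_{v\in[n]}|C(v)|^{k-1}$ and the tree--graph inequality \eqref{eq:treegraph}, expanding $\bigpar{\sum_v|C(v)|^{k-1}}^q$ as a sum over $q$-tuples $(v_1,\dots,v_q)$ and decomposing by the partition $\pi$ of the tuple according to which components the $v_j$ share. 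For each partition with blocks of sizes $b_1,\dots,b_r$ the contribution to the expectation involves the single-component moments $\E|C(v)|^{m}\le(2m-3)!!\,(\E|C(v)|)^{2m-1}$ from \eqref{eq:treegraph}, combined with $\E_{n,p_D}|C(v)|=S_n(p_D)/n=\Theta(n^{1/3})$ (which follows from Theorems \ref{thm:RG} and \ref{thm:rough}). Alternatively, the required moment bounds may be read off from the finer analysis in \cite{Aldous1997, JansonSpencer2007, JansonLuczak2008} of the scaled component-size sequence inside the critical window.

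For (ii), applying (i) with some exponent $q'>q$ in place of $q$ yields uniform boundedness of the $q'$-th moments, hence uniform integrability of the $q$-th moments of $\sum_i|C_i|^k/n^{2k/3}$. Combined with the distributional convergence $\sum_i|C_i|^k/n^{2k/3}\dto W_{\lambda,k}$ from Lemma \ref{lem:conv}, this gives \eqref{eq:conv:sum:sq} together with $\E(W_{\lambda,k})^q<\infty$. Continuity of $\lambda\mapsto\E(W_{\lambda,k})^q$ follows from the integral formulas \eqref{def:Lambda}--\eqref{def:fk} (suitably extended to higher moments) via dominated convergence, using the exponential decay of $\gLL(x)$ that is uniform in $\lambda$ on compact sets. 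For uniformity of the convergence over $\lambda\in[\lambda_1,\lambda_2]$, I would argue by contradiction: any failing subsequence would have $\lambda_n\to\lambda^*\in[\lambda_1,\lambda_2]$ and hence $p_n=1/n+(\lambda^*+o(1))n^{-4/3}$, so Lemma \ref{lem:conv} still yields distributional convergence to $W_{\lambda^*,k}$; uniform integrability along this sequence comes from (i) applied with $D=\max\{|\lambda_1|,|\lambda_2|\}$, producing the required moment convergence and the desired contradiction.

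The principal obstacle will be the higher-moment bound ($q\ge 2$) in step (i). A naive H\"older estimate applied to $\bigpar{\sum_v|C(v)|^{k-1}}^q$ loses a factor of $n^{(q-1)/3}$ compared to the correct scaling $n^{2qk/3}$, so one must carefully exploit the partition structure of $(v_1,\dots,v_q)$ into distinct components, or equivalently the $\ell^2$-summability of the limiting Aldous component-size sequence, to recover the right exponent.
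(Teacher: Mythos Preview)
Your plan matches the paper's proof almost exactly: the monotonicity reduction in (i), the expansion of $\bigl(\sum_v |C(v)|^{k-1}\bigr)^q$ by the component-partition of $(v_1,\dots,v_q)$ together with the tree--graph inequality and $\E_{n,p}|C(v)|=O(n^{1/3})$, and then uniform integrability plus Lemma~\ref{lem:conv} for (ii). The paper carries out your partition idea via sequential conditioning on $C(v_1),C(v_2),\dots$ and the monotonicity $S_{n-|C(v)|}(p)\le S_n(p)$, arriving at the bound \eqref{bound:sum:sq:q}; this is precisely the mechanism that avoids the $n^{(q-1)/3}$ loss you flag.

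The one place where your route diverges, and where you should be careful, is the continuity of $\lambda\mapsto \E\bigl(W_{\lambda,k}^q\bigr)$. Your suggestion to use the integral formulas \eqref{def:Lambda}--\eqref{def:fk} ``suitably extended to higher moments'' is delicate for $q\ge2$: the Aldous limit is \emph{not} a Poisson point process, so $\E\bigl(W_{\lambda,k}^q\bigr)$ is not simply an integral against powers of the intensity $\gLL$, and writing down a dominated-convergence argument would require the (non-trivial) correlation structure of the limiting process. The paper sidesteps this entirely with a short calculus lemma (Lemma~\ref{Luc}): once you know that $h_n(\lambda_n)\to h(\lambda)$ whenever $\lambda_n\to\lambda$ --- which is exactly what (ii) establishes --- both continuity of $h$ and uniform convergence on compacts follow automatically. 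Your subsequence contradiction argument for uniformity is in fact one half of that lemma; the other half gives continuity by the same device, with no need to touch the integral representation.
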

\begin{proof}
We start with the uniform moment bound~\eqref{bound:sum:sq}.
Since $\sum_i|C_i|^k$ does not decrease if any edge is added, 
the expectation is a monotone function of $p$; thus it suffices to consider
 $p = 1/n + D n^{-4/3}$. 
As a warm-up, we first consider the special case $q=2$. 
Similarly to~\eqref{eq:Q:sum} we have 
\[
\Bigpar{\sum_i |C_i|^k}^{\raisebox{-2pt}{$\scriptstyle2$}} = \sum_{v \in [n]} |C(v)|^{k-1} \sum_{w \notin C(v)} |C(w)|^{k-1} + \sum_{v \in [n]}|C(v)|^{2k-1} .
\]
Mimicking the conditioning and monotonicity arguments 
leading to~\eqref{eq:sum:upper}, see~\eqref{logder:upper:cond},
we infer that 
\begin{equation*}\label{bound:sum:sq:2}
\E_{n,p} \biggpar{\biggl(\frac{\sum_{i}|C_i|^k}{n^{2k/3}}\biggr)^2} 
\le \frac{\bigl(\E_{n,p} \sum_{v \in [n]} |C(v)|^{k-1}\bigr)^2}{n^{4k/3}} + \frac{\E_{n,p} \sum_{v \in [n]}|C(v)|^{2k-1}}{n^{4k/3}} .
\end{equation*}
Generalizing the above argument, for every integer $q \ge 1$ there is a constant $A_{k,q}$ such that 
\begin{equation}\label{bound:sum:sq:q}
\E_{n,p} \biggpar{\biggl(\frac{\sum_{i}|C_i|^k}{n^{2k/3}}\biggr)^q} 
\le A_{k,q} \sum_{1 \le r \le q} \sum_{\substack{j_1 + \cdots j_r = qk:\\ k | j_i}} \frac{\prod_{1 \le i \le r} \E_{n,p} \sum_{v \in [n]} |C(v)|^{j_i-1}}{n^{2qk/3}} . 
\end{equation}
By \cite[Corollary~5.3]{JansonSpencer2007} 
(or by inserting $\E_{n,p}|C(v)|=n^{-1}S_n(p)=O(n^{1/3})$, see~\eqref{Sn-chi} and~\eqref{eq:S+}, into~\eqref{eq:treegraph})  
there are constants $(B_{j,D})_{j\ge 2}$ such that
\begin{equation}
  \label{jul}
 \E_{n,p} \sum_{v \in [n]} |C(v)|^{j-1} 
=
 \E_{n,p} \sum_{i} |C_i|^{j}
\le B_{j,D} n^{2j/3}.
\end{equation}
Since $j _i \ge k \ge 2$, \eqref{jul} applies to each 
factor in each product in~\eqref{bound:sum:sq:q}, 
so~\eqref{bound:sum:sq} follows for suitable~$C=C(D,k,q)$.

We next turn to~\ref{thm:scaling-ii},
and thus assume $p=1/n + (\lambda + o(1)) n^{-4/3}$. 
For brevity we write
\begin{equation}\label{def:Xnkq}
X_{n,k} := \frac{\sum_{i}|C_i|^k}{n^{2k/3}}.
\end{equation}
The upper bound~\eqref{bound:sum:sq}, with $2q$, say, shows that the
random variables $\bigl((X_{n,k})^q\bigr)_{n \ge 1}$ are uniformly integrable for fixed
$k \ge 2$ and $q \ge 1$,
see e.g.~\cite[Theorem~5.4.2]{Gut2005}. 
Since $X_{n,k} \dto W_{\lambda,k}$ 
by~\eqref{eq:conv:sum}, and thus $(X_{n,k})^q \dto (W_{\lambda,k})^q$ 
by the continuous mapping theorem~\cite[Theorem~5.10.4]{Gut2005}),
it thus follows that $\E_{n,p}((X_{n,k})^q) \to \E ((W_{\lambda,k})^q)<\infty$
as $n \to \infty$, see~\cite[Theorem~5.5.9]{Gut2005}, which
completes the proof of~\eqref{eq:conv:sum:sq}.  

The final claims now follow by the following elementary calculus lemma.
\end{proof}

\begin{lemma}\label{Luc}
Suppose that $h_n(\lambda)$ and $h(\lambda)$ are real-valued functions on $\RR$
  such that if $\gl\in\RR$ and $\gl_n=\gl+o(1)$, then $h_n(\gl_n)\to
  h(\gl)$ as $n\tooo$.
Then $h(\gl)$ is continuous and
$h_n(\gl)\to h(\gl)$ uniformly for $\gl$ in any compact set.
\end{lemma}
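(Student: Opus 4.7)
The plan is a standard two-step soft argument. First I would establish pointwise continuity of the limit $h$, then deduce uniform convergence on compact sets by contradiction, using compactness together with the continuity just proved.

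For continuity at a point $\gl\in\RR$, fix any sequence $\mu_k\to\gl$ and aim to show $h(\mu_k)\to h(\gl)$. Applying the hypothesis to the constant sequence $\gl_n\equiv\mu_k$ (for each fixed $k$) gives $h_n(\mu_k)\to h(\mu_k)$ as $n\tooo$. A diagonal choice then yields integers $n_1<n_2<\cdots$ with $|h_{n_k}(\mu_k)-h(\mu_k)|<1/k$. Defining a single sequence by $\gl_n:=\mu_k$ for $n_k\le n<n_{k+1}$ (and arbitrarily for $n<n_1$) produces $\gl_n\to\gl$, so the hypothesis yields $h_n(\gl_n)\to h(\gl)$; restricting to $n=n_k$ gives $h_{n_k}(\mu_k)\to h(\gl)$, and the triangle inequality with $|h_{n_k}(\mu_k)-h(\mu_k)|<1/k$ forces $h(\mu_k)\to h(\gl)$.

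For uniform convergence on a compact set $K\subset\RR$, suppose to the contrary that there exist $\eps>0$, a subsequence $n_k\tooo$, and points $\mu_k\in K$ with $|h_{n_k}(\mu_k)-h(\mu_k)|\ge\eps$. By compactness of $K$, I can extract a further subsequence (still indexed by $k$) along which $\mu_k\to\mu^*\in K$. Extend $(\mu_k)$ to a full sequence $(\gl_n)_{n\ge 1}$ converging to $\mu^*$ by setting $\gl_{n_k}:=\mu_k$ and $\gl_n:=\mu^*$ for indices $n\notin\{n_k\}$. The hypothesis then yields $h_n(\gl_n)\to h(\mu^*)$, so in particular $h_{n_k}(\mu_k)\to h(\mu^*)$. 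By the continuity of $h$ established above, $h(\mu_k)\to h(\mu^*)$ as well, so $|h_{n_k}(\mu_k)-h(\mu_k)|\to 0$, contradicting the assumed lower bound $\eps$.

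I do not anticipate any genuine obstacle: the whole argument is of the standard ``every subsequence has a further subsequence converging to the right limit'' flavour. The only point requiring minor care is that the hypothesis is formulated for full sequences indexed by the same $n$ as the $h_n$, which is why at each step I pad a given subsequence to a full sequence before invoking it; constant choices $\gl_n\equiv\mu$ recover pointwise convergence, and this flexibility is exactly what lets one upgrade pointwise convergence to uniform convergence on compacta.
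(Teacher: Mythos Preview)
Your proposal is correct and follows essentially the same approach as the paper: a diagonal argument for continuity, then a compactness-plus-continuity contradiction for uniform convergence. The only cosmetic differences are that the paper phrases the continuity step as a contradiction rather than directly, and is terser about the padding of subsequences to full sequences (you spell this out more carefully, which is a good thing).
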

\begin{proof}
  First, suppose that $h$ is discontinuous at some $\gl$.
Then there exist $\eps>0$ and a sequence $\gl_k\to\gl$ such that
$|h(\gl_k)-h(\gl)|>\eps$ for all $k$.
Since $h_n(\gl_k)\to h(\gl_k)$,
we may find an increasing sequence $n_k$ such that
$|h_{n_k}(\gl_k)-h(\gl_k)|<\eps/2$. Then $|h_{n_k}(\gl_k)-h(\gl)|>\eps/2$.
On the other hand, the assumption implies $h_{n_k}(\gl_k)\to h(\gl)$, a
contradiction. 

Similarly, assume that $h_n(\gl)$ does not converge uniformly to $h(\gl)$ on
  the compact set $K$. Then there exist $\eps>0$ and sequences
$n_k\to\infty$ and $\gl_k\in K$ such that
  $|h_{n_k}(\gl_k)-h(\gl_k)|>\eps$.
Since $K$ is compact, we may select a subsequence such that, along
this subsequence,
$\gl_k\to\gl$ for some $\gl$.
Then the assumption and the continuity of $h$ just shown imply that, along
the subsequence, 
$h_{n_k}(\gl_k)\to h(\gl)$ and $h(\gl_k)\to h(\gl)$, and thus
$h_{n_k}(\gl_k)-h(\gl_k)\to0$, a contradiction.
\end{proof}
\begin{remark}\label{Rluc}
Lemma~\ref{Luc} is valid for functions on any metric space. (Also the ranges
of the functions may be in an arbitrary metric space.)
Furthermore, 
  the converse of the lemma also holds (and is easy): 
if $h_n(\gl)\to h(\gl)$ uniformly on compact sets
  and $h$ is continuous, then $h_n(\gl_n)\to h(\gl)$ whenever $\gl_n\to \gl$.
\end{remark}

\begin{proof}[Proof of~\eqref{main:conv}--\eqref{main:der} of Theorem~\ref{thm:main2}]
Recall that $\goodchi_{K_n}(p) = S_n(p)/n$ by \eqref{Sn-chi}.  
Define~$X_{n,k}$ as in~\eqref{def:Xnkq}. 
Using also~\eqref{def:Sn}, \eqref{eq:conv:sum:sq}  and \eqref{eq:conv:exp},
we have 
\begin{equation}\label{SnCi:conv:2}
\frac{\goodchi_{K_n}(p)}{n^{1/3}} =\frac{S_n(p)}{n^{4/3}} = \E_{n,p} X_{n,2} \: \xrightarrow{n\to\infty} \: \E W_{\lambda,2} = f_2(\lambda) > 0 ,
\end{equation}
where $f_2(\lambda)>0$ follows from the definition \eqref{def:fk}.
This proves \eqref{main:conv}.

Similarly, by~\eqref{SnCiDer}, \eqref{eq:conv:sum:sq}  and
\eqref{eq:conv:exp}, we have 
\begin{equation}\label{SnCi:conv:der}
\frac{\frac{\dx}{\dx p}\goodchi_{K_n}(p)}{n^{5/3}}
 = \frac{\frac{\dx}{\dx p}S_n(p)}{n^{8/3}} 
= \frac{\E_{n,p} (X_{n,2}^2)}{1-p} - \frac{\E_{n,p} X_{n,4}}{1-p} \: \xrightarrow{n\to\infty} \: \E (W_{\lambda,2}^2) - \E W_{\lambda,4} =: g(\lambda) .
\end{equation}

By Lemma \ref{Luc}, $f_2(\gl)$ and $g(\gl)$ are continuous, and for
$p=1/n+\gl n^{-4/3}$, the limits hold uniformly on compact sets. 
Combining~\eqref{SnCi:conv:2}--\eqref{SnCi:conv:der} we infer
\[
\frac{\frac{\dx}{\dx p} \log \goodchi_{K_n}(p)}{n^{4/3}} \: \xrightarrow{n\to\infty} \: \frac{g(\lambda)}{f_2(\lambda)},
\] 
and thus $g(\lambda)/f_2(\lambda) > 0$ follows from~\eqref{eq:logder:chi:S}
and~\eqref{rough:lower}. 

It remains to prove that $g(\lambda) = \frac{\dx}{\dx \lambda} f_2(\lambda)$ holds. 
To this end we fix $\lambda_1,\lambda_2 \in \RR$ with $\lambda_1 < \lambda_2$, and set $p_i = 1/n + \lambda_i n^{-4/3}$. 
By~\eqref{SnCi:conv:2} we have  
\begin{equation}
\label{eq:scaling:diff:f}
\int_{p_1}^{p_2} \frac{\frac{\dx}{\dx p}  \goodchi_{K_n}(p)}{n^{1/3}} \dd p = \frac{\goodchi_{K_n}(p_2)}{n^{1/3}} - \frac{\goodchi_{K_n}(p_1)}{n^{1/3}} \: \xrightarrow{n\to\infty} \: f_2(\lambda_2)-f_2(\lambda_1) .
\end{equation}
On the other hand, by substituting $p = 1/n + \lambda n^{-4/3}$ we have by
the uniform convergence in \eqref{SnCi:conv:der} just shown (or by
dominated convergence and~\eqref{bound:sum:sq})
that
\begin{equation}
\label{eq:scaling:diff:intg}
\int_{p_1}^{p_2} \frac{\frac{\dx}{\dx p}  \goodchi_{K_n}(p)}{n^{1/3}} \dd p 
= \int_{\lambda_1}^{\lambda_2} \frac{\frac{\dx}{\dx p}  \goodchi_{K_n}(p) \big|_{p = 1/n + \lambda n^{-4/3}}}{n^{5/3}} \dd\lambda \: \xrightarrow{n\to\infty} \: \int_{\lambda_1}^{\lambda_2} g(\lambda) \dd\lambda .
\end{equation}
It follows from~\eqref{eq:scaling:diff:f}--\eqref{eq:scaling:diff:intg} that 
$f_2(\lambda_2)-f_2(\lambda_1) = \int_{\lambda_1}^{\lambda_2} g(\lambda) \dd\lambda$. 
Since $\lambda_1 < \lambda_2$ were arbitrary, and $g(\gl)$ is continuous,
it follows that $g(\lambda) = \frac{\dx}{\dx \lambda} f_2(\lambda)$ 
for all $\lambda \in \RR$, completing the proof. 
\end{proof}

\subsection{Explicit bounds for $\lambda=0$}\label{sec:zero}
In this subsection we complete the proof of Theorem~\ref{thm:main2}, and by
the arguments of Section~\ref{sec:convergence} it remains to prove the
following technical lemma.
\begin{lemma}\label{lem:conv:zero}
Define the function $f_2$ as in~\eqref{def:fk}. 
Then $\frac{\dx^2}{\dx \lambda^2} \log f_2(0) \neq 0$. 
\end{lemma}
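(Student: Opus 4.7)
The plan is to reduce the claim to verifying a single numerical inequality among $f_2(0),f_3(0),f_4(0),f_6(0)$, and then to compute those four values to provable precision from the explicit series \eqref{def:Lambda}--\eqref{def:fk}.

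First, by the quotient rule,
\[
\frac{\dx^2}{\dx\lambda^2}\log f_2(\lambda)=\frac{f_2''(\lambda)f_2(\lambda)-f_2'(\lambda)^2}{f_2(\lambda)^2}.
\]
One application of the recurrence \eqref{eq:fk:der} at $\lambda=0$ gives $f_k'(0)=\tfrac12 f_{k+2}(0)$; differentiating \eqref{eq:fk:der} once more and evaluating at $\lambda=0$ yields $f_2''(0)=\tfrac14 f_6(0)-f_3(0)$. Since $f_2(0)>0$, the claim \eqref{eq:main20} is thus equivalent to the numerical inequality
\[
f_6(0)\,f_2(0)-4f_3(0)\,f_2(0)-f_4(0)^2\ne 0.
\]

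Second, I would make each $f_k(0)$ explicit. At $\lambda=0$ we have $F(x,0)=x^3/6$, so inserting \eqref{def:Lambda} into \eqref{def:fk} and interchanging sum and integral (legal since every term is positive), the substitution $u=x^3/6$ turns each monomial integral into a Gamma function and gives
\[
f_k(0)=\sqrt{\tfrac{2}{\pi}}\sum_{\ell\ge0}w_\ell\cdot 6^{k/3-3/2+\ell/2}\,\Gamma\!\left(\tfrac{k}{3}-\tfrac12+\tfrac{\ell}{2}\right).
\]
By \eqref{def:wl} the Wright constants $w_\ell=\E(\Bex^\ell)/\ell!$ are computable to arbitrary precision from the classical recursion for moments of the Brownian excursion area (Tak\'acs--Louchard--Flajolet; see the survey \cite{Janson2007}). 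Thus each $f_k(0)$ is specified by a concrete, termwise-computable series.

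Third, I would truncate and certify. Using the standard asymptotic $\E(\Bex^\ell)^{1/\ell}=\Theta(\sqrt{\ell})$ together with Stirling's formula for $\Gamma(\ell/2+c)$, one checks that the general term of the series for $f_k(0)$ decays like $\ell^{O(1)}\cdot 2^{-\ell}$, so truncation at a modest $L$ produces an explicit tail bound exponentially small in $L$. Plugging the truncated values of $f_2(0),f_3(0),f_4(0),f_6(0)$ (computed under interval arithmetic) into the combination $f_6(0)f_2(0)-4f_3(0)f_2(0)-f_4(0)^2$ yields a definite signed value whose absolute value exceeds the combined truncation and rounding error, establishing~\eqref{eq:main20}.

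The hard part will be the third step: making every constant in the tail bound on $w_\ell\cdot 6^{\ell/2}\Gamma(\ell/2+c)$ fully explicit (rather than merely asymptotic), and propagating rounding errors so that the final signed quantity is provably nonzero. Rough floating-point experiments consistent with Figure~\ref{fig:plots} indicate that the quantity is of order unity and negative, so the certified margin should be comfortable even for a small truncation (e.g.\ $L\le 20$).
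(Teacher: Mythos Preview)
Your approach is essentially the paper's: reduce to a single numerical inequality in the $f_k(0)$, expand each $f_k(0)$ as a Gamma-series in the Wright constants, and certify the sign after truncating with an explicit geometric tail bound (the paper carries this out in Lemmas~\ref{lem:IKLwl}--\ref{Ltailsum} and Corollary~\ref{cor:wlIkl}, truncating at $\ell_0=75$).

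Two small points. First, the paper shortcuts one of your four numerical computations by invoking the exact identity $f_3(\lambda)=2+2\lambda f_2(\lambda)$ from \cite[Remark~6]{JansonSpencer2007}, giving $f_3(0)=2$ on the nose; your version with $f_3(0)$ computed numerically works too, but this is cleaner. Second, your final sentence has the sign backwards: the quantity $f_6(0)f_2(0)-4f_3(0)f_2(0)-f_4(0)^2$ is \emph{positive} (the paper obtains $\approx 3.978$, so $\frac{\dx^2}{\dx\lambda^2}\log f_2(0)\approx 0.297$), and Figure~\ref{fig:plots} is consistent with this since the plotted second derivative is visibly positive at $\lambda=0$. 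This does not affect the validity of the argument for $\neq 0$, but you should correct the remark.
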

\begin{remark}\label{rem:conv:zero}
The proof of Lemma~\ref{lem:conv:zero} shows that 
$\frac{\dx^2}{\dx \lambda^2} \log f_2(0) \approx 0.296833365232$. 
\end{remark}
The idea is to give (in the special case $\lambda=0$) rigorous numerical 
estimates for the right hand side of 
\begin{equation}\label{eq:fk:der2}
\frac{\dx^2}{\dx \lambda^2}\log f_2(\lambda) = \frac{f_2(\lambda) \frac{\dx^2}{\dx \lambda^2} f_2(\lambda)- \bigl(\frac{\dx}{\dx \gl} f_2(\lambda)\bigr)^2}{\bigl(f_2(\lambda)\bigr)^2} .
\end{equation}
The derivatives can be computed by \eqref{eq:fk:der}.
In the special case $\lambda=0$ we obtain 
\begin{align}
\frac{\dx }{\dx \lambda}f_2(0)& = \tfrac12f_{4}(0) , \\
\frac{\dx^2 }{\dx \lambda^2}f_2(0) &=\tfrac{1}4 f_6(0)-f_3(0) .
\end{align}
Furthermore, by~\cite[Remark~6]{JansonSpencer2007} we also have 
(in our notation) the identity $f_3(\lambda) = 2 + 2 \lambda f_2(\lambda)$, 
so that $f_3(0)=2$.
Hence~\eqref{eq:fk:der2} yields
\begin{equation}\label{eq:fk:der0}
\frac{\dx^2}{\dx \lambda^2} \log f_2(0) = 
\frac{f_2(0) \bigl(f_6(0) - 8\bigr) - f_4(0)^2}{4f_2(0)^2} ,
\end{equation}
and due to $0 < f_2(\lambda) < \infty$ 
our task is reduced to showing that 
\begin{equation}\label{leontes}
f_2(0) f_6(0) - 8 f_2(0) - f_4(0)^2\neq0.  
\end{equation}
To evaluate the terms in~\eqref{leontes}, note that
by Tonelli's theorem,
the function $f_k$ defined in~\eqref{def:fk} can be written~as  
\begin{equation}\label{eq:fk}
f_k(\lambda) = \sum_{\ell \ge 0} (2\pi)^{-1/2} w_{\ell} \int_{0}^{\infty}
x^{k+3\ell/2-5/2}e^{-F(x,\lambda)} \dx x . 
\end{equation}
Since $F(x,0)=x^3/6$, see~\eqref{def:Fxl}, we can in the case $\gl=0$
evaluate the integral in~\eqref{eq:fk} using the gamma function
$\Gamma(z):=\int _{0}^{\infty }x^{z-1}e^{-x} \dx x$. 
We define, using the substitution $y=x^3/6$,
\begin{equation}\label{eq:fk:int:0}
I_{k,\ell} := \int_{0}^{\infty} x^{k+3\ell/2-5/2}e^{-F(x,0)} \dx x 
= \frac{1}3 \cdot 6^{k/3 +(\ell-1)/2} \cdot \Gamma(k/3+(\ell-1)/2) ,
\end{equation}
and by~\eqref{eq:fk} thus have 
\begin{equation}\label{eq:fk0}
f_k(0) = \sum_{\ell \ge 0} (2\pi)^{-1/2} w_{\ell}I_{k,\ell}.
\end{equation}
The plan is to truncate the infinite sum in~\eqref{eq:fk0} with the 
help of the following uniform estimates.  
\begin{lemma}\label{lem:IKLwl}
For all $k \ge 2$ and $\ell \ge 1$ we have $I_{k,\ell},w_{\ell} \ge 0$ and 
\begin{align}
\label{eq:IKL:bounds}
I_{k,\ell} & \le 2 \pi^{1/2} (3\ell)^{k/3-1}
\Bigl(\frac{3\ell}{e}\Bigr)^{\ell/2} \cdot e^{2k^2/9\ell}, \\
\label{eq:wl:bounds}
w_{\ell} & \le 8 \pi^{-1/2} \sqrt{\ell} \Bigl(\frac{e}{12\ell}\Bigr)^{\ell/2} .
\end{align}
\end{lemma}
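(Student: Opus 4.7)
The plan is to prove the two stated inequalities separately; the nonnegativity of $I_{k,\ell}$ and $w_\ell$ is immediate from~\eqref{eq:fk:int:0} and~\eqref{def:wl} (noting $\Bex > 0$ almost surely).

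For~\eqref{eq:IKL:bounds}, I would set $m := k/3 + (\ell-1)/2$, so that~\eqref{eq:fk:int:0} reads $I_{k,\ell} = \tfrac{1}{3}\,6^m\,\Gamma(m)$, and apply Stirling's inequality in the form $\Gamma(m)\le \sqrt{2\pi/m}\,(m/e)^m e^{1/(12m)}$ to obtain $I_{k,\ell} \le \frac{\sqrt{2\pi}}{3\sqrt m}(2k+3\ell-3)^m e^{-m}e^{1/(12m)}$. Writing $2k+3\ell-3 = 3\ell\bigl(1+(2k-3)/(3\ell)\bigr)$ and using $(1+y)^a\le e^{ay}$ to peel off $(3\ell)^m$, the product $e^{-m}\cdot(2k+3\ell-3)^m$ reorganizes as $(3\ell)^m e^{-\ell/2}\,e^{2k^2/(9\ell)}\,e^{(3-4k)/(6\ell)}$: the exponential factor $e^{2k^2/(9\ell)}$ in the target emerges precisely from the leading quadratic term in the expansion of $m(2k-3)/(3\ell)$, after cancellation of the $k/3$ linear piece against the $-m$ in $e^{-m}$. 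Since $(3\ell)^m = (3\ell)^{k/3-1}\cdot(3\ell)^{\ell/2}\cdot(3\ell)^{-1/2}$, the residual prefactor $\frac{\sqrt{2\pi}}{3\sqrt{3m\ell}}\,e^{(3-4k)/(6\ell)+1/(12m)}$ is bounded (worst case $k=2,\ell=1$) by $\frac{\sqrt\pi}{3}\,e^{1/8}$, which is comfortably below the target prefactor $2\sqrt\pi$; hence~\eqref{eq:IKL:bounds} holds with room to spare.

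For~\eqref{eq:wl:bounds}, I would use the explicit closed form for Wright's constants (see~\cite{Wright1977,Spencer1997,Janson2007}), which expresses $w_\ell$ in terms of $1/\Gamma((3\ell-1)/2)$ multiplied by a coefficient of moderate growth, itself governed by a Cauchy-type recursion derived from the Brownian-excursion area. Applying Stirling to $\Gamma((3\ell-1)/2)=\Gamma(3\ell/2-1/2)$ produces $(3\ell/(2e))^{3\ell/2}$ modulo polynomial prefactors, and combining this with the remaining powers of $2$ and $\ell!$ in the moment formula yields a factor $(12\ell/e)^{\ell/2}$ in the denominator, which is exactly the superexponential decay $(e/(12\ell))^{\ell/2}$ in~\eqref{eq:wl:bounds}; the residual $\sqrt\ell$ and constant $8\pi^{-1/2}$ drop out from the polynomial prefactors once the recursion is controlled.

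The main obstacle is sourcing and cleanly applying the closed form for Wright's constants so that the Stirling calculation reproduces the stated constants $8\pi^{-1/2}$ and the exact power $\sqrt\ell\,(e/(12\ell))^{\ell/2}$, since any looseness in bounding the recursion coefficient must be absorbed into the finite prefactor. By contrast, the bound~\eqref{eq:IKL:bounds} is a mechanical (if fiddly) computation whose only subtle step is the precise use of $(1+y)^a\le e^{ay}$: weaker or cruder bounds at this stage would lose the $e^{2k^2/(9\ell)}$ correction that the statement demands, so the Taylor-type estimate must be applied exactly as above rather than replaced by, say, $(1+y)^a \le 2^a$ for $y\le 1$.
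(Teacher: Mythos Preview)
Your approach to $I_{k,\ell}$ is exactly the paper's: Stirling for $\Gamma(m)$ with $m=k/3+(\ell-1)/2$, then $(1+y)^a\le e^{ay}$ applied to $(1+(2k-3)/(3\ell))^m$. One slip: your decomposition $(3\ell)^m=(3\ell)^{k/3-1}(3\ell)^{\ell/2}(3\ell)^{-1/2}$ has exponents summing to $m-1$, not $m$, so your quoted residual prefactor is off by a factor $3\ell$. The corrected residual is $\tfrac{\sqrt{2\pi}}{3}\sqrt{3\ell/m}\,e^{(3-4k)/(6\ell)+1/(12m)}$, which is still below $2\sqrt\pi$ since $m\ge\ell/2$ and $k\ge2$; so the argument survives, and indeed it is the paper's calculation with cosmetically different bookkeeping.

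For $w_\ell$ you have the right architecture --- the formula expresses $w_\ell$ as a recursively-defined coefficient divided by $\Gamma((3\ell-1)/2)$, and Stirling on the Gamma factors yields the $(e/(12\ell))^{\ell/2}$ decay --- but you stop at the step you yourself flag as the obstacle: bounding the recursion coefficient. The paper resolves this by a concrete observation you do not make: in Louchard's recursion \cite[(5)]{Janson2007} all the $\gamma_i$ are positive by \cite[(4)]{Janson2007}, and the structure of that recursion then lets one discard all but the first (linear) term to obtain an explicit one-step upper bound, which iterates to
\[
w_\ell\le \frac{1}{\ell!}\cdot\frac{2\sqrt\pi}{(36\sqrt2)^\ell\,\Gamma((3\ell-1)/2)}\cdot\frac{12\ell}{6\ell-1}\,\frac{\Gamma(3\ell+1/2)}{\Gamma(\ell+1/2)}.
\]
From there four applications of Stirling give~\eqref{eq:wl:bounds}. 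Without this positivity trick the quadratic convolution term in the recursion is of the same order as the linear term, and there is no evident way to ``control the recursion'' with constants tight enough to land inside $8\pi^{-1/2}$; this is the missing idea in your sketch.
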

The upper bound \eqref{eq:wl:bounds} is off from the asymptotic
value in \cite[(52)]{Janson2007} only by a factor $8/3$.
\begin{proof}
The lower bounds $I_{k,\ell} \ge 0$ and $w_{\ell} \ge 0$ are trivial. 
Turning to upper bounds, we start with $I_{k,\ell}$. 
We use the well-known Stirling-type estimate (see,
e.g.,~\cite[(5.6.1)]{NIST}) 
\begin{equation}\label{gamma:stir}
1 \le \frac{\Gamma(m)}{\sqrt{2\pi/m} (m/e)^m} \le e^{1/12m} . 
\end{equation}
Inserting~\eqref{gamma:stir} into~\eqref{eq:fk:int:0}, it follows by a
simple calculation that, using $k\ge2$ and and $1+ x \le e^x$,
\begin{equation*}\label{eq:IKL:bounds:0}
\begin{split}
I_{k,\ell} 
&\le \frac{1}{3} \cdot \frac{\sqrt{2 \pi}}{\sqrt{k/3 + (\ell-1)/2}} \cdot
\Bigl(\frac{2k + 3(\ell-1)}{e}\Bigr)^{k/3 +(\ell-1)/2} \cdot e^{1/4k} \\  
& \le \frac{\sqrt{4\pi} }{3\sqrt{\ell}} \cdot \Bigl(\frac{3\ell}{e}\Bigr)^{k/3 + (\ell-1)/2} \cdot \Bigl(1 + \frac{2k-3}{3\ell}\Bigr)^{k/3 + \ell/2} \cdot e^{1/4k}. 
\\
&\le \frac{\sqrt{4\pi} }{3 \sqrt{\ell}} \cdot \Bigl(\frac{3\ell}{e}\Bigr)^{k/3 + (\ell-1)/2} \cdot e^{1/4k + 2k^2/9\ell + k/3-1/2} \\
& \le \frac{\sqrt{4\pi}}{\sqrt{3}} \cdot (3\ell)^{k/3-1} \cdot \Bigl(\frac{3\ell}{e}\Bigr)^{\ell/2} \cdot e^{1/8 + 2k^2/9\ell},
\end{split}
\end{equation*}
which due to $\sqrt{4 /3}e^{1/8} <2$ completes the proof of~\eqref{eq:IKL:bounds}.

For $w_{\ell}$, we combine \eqref{def:wl}
with a recurrence formula by Louchard~\cite{Louchard1984} for the Brownian
excursion area, see~\cite[(4)~and~(5)]{Janson2007}:
in the recursion \cite[(5)]{Janson2007} all $\gamma_i$ are positive by
\cite[(4)]{Janson2007}, so the first term on the right hand side 
of~\cite[(5)]{Janson2007} gives an upper bound, which implies 
\begin{equation}\label{eq:wl:bounds:0}
w_{\ell}=  \frac{1}{\ell!} \cdot\E(\Bex^\ell)
\le \frac{1}{\ell!} \cdot \frac{2 \sqrt{\pi}}{(36 \sqrt{2})^\ell \Gamma((3\ell-1)/2)} \cdot \frac{12 \ell}{6\ell-1} \frac{\Gamma(3\ell+1/2)}{\Gamma(\ell+1/2)} .
\end{equation}
Using $\ell!=\ell\Gamma(\ell)$ and the estimate~\eqref{gamma:stir} four
times, it
follows by a simple (but slightly tedious) calculation that 
\begin{equation*}
\begin{split}
w_{\ell} &\le \frac{1}{\sqrt{\pi \ell}}\Bigl(\frac{e}{36
  \sqrt{2}\ell}\Bigr)^\ell \cdot \frac{3\ell-1}{2 \sqrt{e}}
\Bigl(\frac{2e}{3\ell-1}\Bigr)^{3\ell/2} \cdot 
\frac{12\ell}{6\ell-1} \Bigl(\frac{3\ell
  +1/2}{e}\Bigr)^{3\ell} \Bigl(\frac{e}{\ell+1/2}\Bigr)^{\ell} e^{1/36\ell}
\\ 
& \le \frac{3 e^{1/36\ell}}{\sqrt{\pi e}} \cdot \sqrt{\ell} \cdot
\Bigl(\frac{e^2}{36 \sqrt{2}\ell^2}\Bigr)^\ell \cdot 2^{3\ell/2}\Bigl(1 +
\frac{3/2}{3\ell-1}\Bigr)^{3\ell/2} \cdot
\Bigl(\frac{3\ell}{e}\Bigr)^{3\ell/2}\Bigl(1 +
\frac{1}{6\ell}\Bigr)^{3\ell/2} 
\\
& \le \frac{3 e^{(1/36 + 9/8 + 3/12-1/2)}}{\sqrt{\pi }} \cdot \sqrt{\ell}
\cdot \Bigl(\frac{e}{12 \ell}\Bigr)^{\ell/2} 
\le 8 \pi^{-1/2} \sqrt{\ell} \Bigl(\frac{e}{12 \ell}\Bigr)^{\ell/2},
\end{split}
\end{equation*}
completing the proof of~\eqref{eq:wl:bounds}. 
\end{proof}
\begin{lemma}\label{Ltailsum}
  For every real $s\ge0$ and integer $\ell_0\ge 2s$,
  \begin{equation*}
	\sum_{\ell>\ell_0} \ell^s2^{-\ell} \le5\ell_0^s2^{-\ell_0}.
  \end{equation*}
  \begin{proof}
	Let $a_\ell:=l^{s}2^{-\ell}$. For $\ell\ge\ell_0$, we have
	\begin{equation*}
	  \frac{a_{\ell+1}}{a_{\ell}}
=\Bigpar{1+\frac{1}{\ell}}^s2^{-1}
\le e^{s/\ell}2^{-1}
\le e^{1/2}2^{-1} < \frac{5}6.
	\end{equation*}
Hence, $a_{\ell}\le (5/6)^{\ell-\ell_0}a_{\ell_0}$
and the result follows by summing a geometric series.
  \end{proof}
\end{lemma}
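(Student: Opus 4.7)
The plan is to bound the sum by comparison with a geometric series. Define $a_\ell := \ell^s 2^{-\ell}$ for $\ell \ge 1$, so the task is to show $\sum_{\ell > \ell_0} a_\ell \le 5 a_{\ell_0}$ whenever $\ell_0 \ge 2s$.

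First, I would examine the ratio of consecutive terms. For $\ell \ge \ell_0$,
\[
\frac{a_{\ell+1}}{a_\ell} = \frac{(\ell+1)^s}{\ell^s} \cdot \frac{1}{2} = \Bigl(1 + \frac{1}{\ell}\Bigr)^s \cdot \frac{1}{2} \le \exp(s/\ell) \cdot \frac{1}{2} \le \exp\bigl(s/\ell_0\bigr) \cdot \frac{1}{2} \le \frac{e^{1/2}}{2},
\]
using $1+x \le e^x$ and the hypothesis $\ell_0 \ge 2s$ (so $s/\ell_0 \le 1/2$). A small numerical check gives $e^{1/2}/2 < 5/6$, since $e^{1/2} \approx 1.6487 < 5/3$.

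Next, I would iterate this bound: for any $k \ge 1$,
\[
a_{\ell_0 + k} \le \Bigl(\frac{5}{6}\Bigr)^{\!k} a_{\ell_0}.
\]

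Finally, summing the geometric series,
\[
\sum_{\ell > \ell_0} a_\ell = \sum_{k \ge 1} a_{\ell_0 + k} \le a_{\ell_0} \sum_{k \ge 1} \Bigl(\frac{5}{6}\Bigr)^{\!k} = a_{\ell_0} \cdot \frac{5/6}{1-5/6} = 5 a_{\ell_0},
\]
which is the desired bound. There is no genuine obstacle here; the only point requiring a smidgen of care is verifying the numerical inequality $e^{1/2}/2 < 5/6$, which sets up the geometric decay with common ratio $5/6$ so that $1/(1-5/6) = 6$ and the factor $5/6$ in the first omitted term combine to yield exactly the advertised constant $5$.
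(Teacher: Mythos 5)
Your proposal is correct and follows essentially the same route as the paper: bound the consecutive ratio by $(1+1/\ell)^s 2^{-1}\le e^{s/\ell}/2\le e^{1/2}/2<5/6$ using $\ell\ge\ell_0\ge 2s$, then sum the resulting geometric series to get the constant $5$. No issues.
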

\begin{corollary}\label{cor:wlIkl}
For all integers $k\ge2$ and $\ell_0 \ge 2k/3-1$, 
\begin{equation}\label{eq:wlIkl:bounds}
0 \le f_k(0) - \sum_{0 \le \ell \le \ell_0} (2\pi)^{-1/2} w_{\ell} I_{k,\ell} 
\le 
11\,e^{2k^2/9\ell_0}3^{k/3} \ell_0^{k/3-1/2}2^{-\ell_0} .
\end{equation}
\end{corollary}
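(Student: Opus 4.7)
The lower bound in \eqref{eq:wlIkl:bounds} is immediate from the series representation \eqref{eq:fk0} together with the nonnegativity $w_\ell, I_{k,\ell}\ge 0$ from Lemma~\ref{lem:IKLwl}. Thus the entire content of the corollary is a tail estimate:
\begin{equation*}
f_k(0) - \sum_{0\le\ell\le\ell_0}(2\pi)^{-1/2} w_\ell I_{k,\ell} \;=\; \sum_{\ell>\ell_0}(2\pi)^{-1/2}w_\ell I_{k,\ell},
\end{equation*}
and the plan is to bound this tail by plugging in the two estimates from Lemma~\ref{lem:IKLwl} and then applying Lemma~\ref{Ltailsum}.

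The first step is to form the product of the bounds \eqref{eq:IKL:bounds} and \eqref{eq:wl:bounds}. The crucial observation is that the two $\ell/2$ powers combine neatly:
\begin{equation*}
\Bigl(\tfrac{3\ell}{e}\Bigr)^{\ell/2}\Bigl(\tfrac{e}{12\ell}\Bigr)^{\ell/2}=\Bigl(\tfrac14\Bigr)^{\ell/2}=2^{-\ell},
\end{equation*}
so the $\ell$-dependence of the geometric-type factors collapses to $2^{-\ell}$. What remains is polynomial in $\ell$: the factor $\sqrt{\ell}\cdot(3\ell)^{k/3-1}=3^{k/3-1}\ell^{k/3-1/2}$. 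After also including the constants $2\sqrt{\pi}\cdot 8\pi^{-1/2}=16$ and the $e^{2k^2/9\ell}$ factor from \eqref{eq:IKL:bounds}, one obtains
\begin{equation*}
w_\ell I_{k,\ell}\;\le\; 16\cdot 3^{k/3-1}\,\ell^{k/3-1/2}\,e^{2k^2/9\ell}\,2^{-\ell}.
\end{equation*}

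The second step is to peel off the $\ell$-dependence of the exponential factor by bounding $e^{2k^2/9\ell}\le e^{2k^2/9\ell_0}$ uniformly for $\ell>\ell_0$, and then to sum. This reduces us to estimating $\sum_{\ell>\ell_0}\ell^{k/3-1/2}2^{-\ell}$, which is exactly the situation of Lemma~\ref{Ltailsum} with $s=k/3-1/2$; its hypothesis $\ell_0\ge 2s$ translates precisely to the hypothesis $\ell_0\ge 2k/3-1$ of the corollary. Applying the lemma gives a bound of $5\ell_0^{k/3-1/2}2^{-\ell_0}$.

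Finally, one just collects the numerical constants: multiplying the factor $(2\pi)^{-1/2}$ outside against $16\cdot 3^{k/3-1}\cdot 5$ produces
\begin{equation*}
\frac{80}{3\sqrt{2\pi}}\cdot 3^{k/3}\;\le\; 11\cdot 3^{k/3},
\end{equation*}
since $80/(3\sqrt{2\pi})\approx 10.64$. Combining everything yields the stated upper bound. There is no real obstacle in the argument; the only point requiring any care is verifying the arithmetic identity $(3\ell/e)(e/12\ell)=1/4$ that makes the $\ell$-dependent powers cancel cleanly into $2^{-\ell}$, and matching up the hypothesis $\ell_0\ge 2k/3-1$ with the input of Lemma~\ref{Ltailsum}.
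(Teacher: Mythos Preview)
Your proof is correct and follows essentially the same route as the paper's: multiply the bounds \eqref{eq:IKL:bounds}--\eqref{eq:wl:bounds}, observe the cancellation $(3\ell/e)^{\ell/2}(e/12\ell)^{\ell/2}=2^{-\ell}$, bound $e^{2k^2/9\ell}\le e^{2k^2/9\ell_0}$ for $\ell>\ell_0$, and then apply Lemma~\ref{Ltailsum} with $s=k/3-1/2$ to arrive at the constant $80/(3\sqrt{2\pi})<11$. Your exposition is slightly more explicit about the cancellation and the role of the hypothesis $\ell_0\ge 2k/3-1$, but the argument is the same.
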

\begin{proof}
The lower bound in~\eqref{eq:wlIkl:bounds} is trivial by~\eqref{eq:fk}, 
\eqref{eq:fk:int:0} and $I_{k,\ell},w_{\ell} \ge 0$. 
Turning to the upper bound, 
\eqref{eq:IKL:bounds}--\eqref{eq:wl:bounds} yield
\[
(2\pi)^{-1/2} w_{\ell} I_{k,\ell} 
\le 
\frac{16}{\sqrt{2\pi}}e^{2k^2/9\ell}3^{k/3-1} \ell^{k/3-1/2} 2^{-\ell} .
\]
Lemma \ref{Ltailsum} thus gives
\begin{equation*}
  \sum_{ \ell> \ell_0} (2\pi)^{-1/2} w_{\ell} I_{k,\ell} 
\le 
\frac{80}{3\sqrt{2\pi}}e^{2k^2/9\ell_0}3^{k/3} \ell_0^{k/3-1/2} 2^{-\ell_0} ,
\end{equation*}
and the result follows.
\end{proof}
The constants $w_\ell$ are easily computed by recursion, 
see, e.g.,~\cite[(4)--(5) or (6)--(7)]{Janson2007}, so the finite sum
$\sum_{0 \le \ell \le \ell_0} (2\pi)^{-1/2} w_{\ell} I_{k,\ell} $ can be
computed numerically (with arbitrary precision) for any $\ell_0$ that is not
too large. Together with the estimate in Corollary~\ref{cor:wlIkl} of the
remainder, which can be made arbitrarily small by choosing a suitable
$\ell_0$, this enables us to compute $f_k(0)$ with arbitrary precision for
any $k \ge 2$.

\begin{proof}[Proof of Lemma~\ref{lem:conv:zero}]
Choosing $\ell_0=75$, the right hand side of~\eqref{eq:wlIkl:bounds} is 
less than $10^{-17}$ for all $2 \le k \le 6$, with room to spare. 
Proceeding as discussed above, we then obtain (using Maple) 
\begin{align}
  f_2(0)&\doteq 1.830470321422761,\label{f20}\\
  f_4(0)&\doteq 3.514851319980978,\\
  f_6(0)&\doteq 16.922562003970612,
\end{align}
where $\doteq$ means equality for all but the last digit 
(which might be off by one). Hence 
\begin{equation}\label{num0}
f_2(0)f_6(0) - 8f_2(0) - f_4(0)^2 \doteq 3.9783051377505, 
\end{equation}
which shows \eqref{leontes} and thus
completes the proof of Lemma~\ref{lem:conv:zero}.
Remark~\ref{rem:conv:zero} follows by inserting 
\eqref{num0} and \eqref{f20} into~\eqref{eq:fk:der0}.  
\end{proof}

\appendix

\section{Asymptotics of $f(\gl)$ as $\gl\to\pm\infty$}\label{AppAgl}
In this appendix we prove the asymptotics of the function $f(\gl)=f_2(\gl)$
stated after Theorem \ref{thm:main2}, and extend the results 
to $f_k(\gl)$ for arbitrary~$k \ge 2$.
\begin{theorem}\label{Tappa}
Define $f_k:\RR \to (0,\infty)$ as in~\eqref{def:fk}.
 For any fixed $k\ge2$,
$f_k(\gl)$ has the asymptotics
  \begin{align}\label{foo-}
	f_k(\gl)&=\frac{(2k-5)!!}{|\gl|^{2k-3}}\bigpar{1+O(|\gl|^{-3})}
&&\text{as } \gl\to-\infty,
\\\label{foo+}
f_k(\gl)&=(2\gl)^k\bigpar{1+o(1)},
&&\text{as } \gl\to+\infty.
  \end{align}
\end{theorem}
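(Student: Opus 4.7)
The plan is to handle the two sides separately.

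For $\gl\to-\infty$, the approach is a direct rescaling of the integral~\eqref{def:fk}. Setting $t=-\gl>0$ and substituting $u=t^2x/2$, the expansion $F(x,-t)=\tfrac12t^2x+\tfrac12tx^2+\tfrac16x^3$ obtained from~\eqref{def:Fxl} becomes $F=u+2u^2/t^3+4u^3/(3t^6)$, so
\begin{equation*}
f_k(-t)=\frac{(2/t^2)^{k-3/2}}{\sqrt{2\pi}}\sum_{\ell\ge0}w_\ell\Bigpar{\frac{2}{t^2}}^{3\ell/2}\int_0^\infty u^{k-5/2+3\ell/2}e^{-u-2u^2/t^3-4u^3/(3t^6)}\dx u.
\end{equation*}
For the $\ell=0$ term, Taylor-expanding $e^{-2u^2/t^3-4u^3/(3t^6)}=1+O((u^2+u^3)/t^3)$ and dominated convergence (with dominating function $u^{k-5/2}e^{-u/2}$) yield $\Gamma(k-3/2)+O(t^{-3})$. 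The tail $\sum_{\ell\ge1}$ is $O(t^{-3})$: each term carries an explicit $(2/t^2)^{3\ell/2}$ factor, and convergence is controlled by cutting the $u$-integration at $u\le Ct^3$ (where the exponential correction is bounded) together with the bound on $w_\ell$ from Lemma~\ref{lem:IKLwl}. Invoking $\Gamma(k-3/2)=(2k-5)!!\sqrt{\pi}/2^{k-2}$, the prefactors collapse exactly to $(2k-5)!!\,t^{-(2k-3)}$, which is~\eqref{foo-}.

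For $\gl\to+\infty$, I would use the probabilistic representation $f_k(\gl)=\E W_{\gl,k}$ from~\eqref{eq:conv:exp}, together with the interpretation of $W_{\gl,k}$ as $\sum_i\xi_i(\gl)^k$ for the ordered excursion lengths $\xi_i(\gl)$ of Aldous's inhomogeneous process~\cite{Aldous1997}. Writing
\begin{equation*}
f_k(\gl)=\E[\xi_1(\gl)^k]+\E\Bigl[\sum_{i\ge2}\xi_i(\gl)^k\Bigr],
\end{equation*}
the scaling-limit analogue of the supercritical concentration~\eqref{C1:supcr:tail} gives $\xi_1(\gl)/(2\gl)\to1$ in probability, which upgrades (via uniform integrability) to $\E\xi_1(\gl)^k=(2\gl)^k(1+o(1))$. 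After removing the largest excursion the remaining process is (in the scaling limit) subcritical with effective parameter of order $\gl$, so the subcritical susceptibility bound~\eqref{eq:S-} yields $\E\sum_{i\ge2}\xi_i^2=O(\gl^{-1})$ in the limit; combined with the a.s.\ bound $\xi_i\le\xi_1=O(\gl)$ this gives $\E\sum_{i\ge2}\xi_i^k=O(\gl^{k-3})=o((2\gl)^k)$. Summing yields~\eqref{foo+}.

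The main obstacle is the uniform integrability in the supercritical case, for which one needs a $\gl$-uniform bound of the form $\E\xi_1(\gl)^{k+1}=O(\gl^{k+1})$. This can be obtained by applying Theorem~\ref{thm:scaling}\ref{thm:scaling-i} with $D=\gl$ and tracking the $D$-dependence of the constant $C(D,k,q)$ in~\eqref{bound:sum:sq}, or by a direct supercritical tail argument in the spirit of Theorem~\ref{thm:RG}\ref{T:RGb}. Once such a bound is available, the upgrade from convergence in probability to $L^1$ is standard.
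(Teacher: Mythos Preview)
Your approach to \eqref{foo-} by direct Laplace-type analysis of the integral is genuinely different from the paper's, which instead imports the subcritical susceptibility expansion from \cite[Theorem~3.4]{JansonLuczak2008} (a polynomial of degree $2k-3$ in $(1-nt)^{-1}$ with leading coefficient $(2k-5)!!$) and passes to the limit $n\to\infty$ via \eqref{eq:conv:sum:sq}. Your route is more self-contained and the leading-term computation is correct. The tail handling is a little loose, though: bounding $e^{-F}\le e^{-u}$ alone gives a term of size $(2/t^2)^{3\ell/2}w_\ell\,\Gamma(k-3/2+3\ell/2)$, which by Stirling and \eqref{eq:wl:bounds} behaves like $(3\ell/(2et^3))^\ell$ and hence \emph{diverges} when summed over~$\ell$; the cut at $u\le Ct^3$ does not repair this. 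A clean fix is to split in the original variable at $x=1$: on $[0,1]$ use $x^{3\ell/2}\le x^{3/2}$ together with $\sum_{\ell\ge1}w_\ell<\infty$ so that the whole $\ell\ge1$ contribution is $O(t^{-2k})$; on $[1,\infty)$ use $\Lambda^{(-t)}(x)\le e^{-t^2/2}\Lambda^{(0)}(x)$ (from $F(x,-t)\ge F(x,0)+xt^2/2$) to get an exponentially small remainder.

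For \eqref{foo+} the paper again takes a different, purely analytic route: it invokes \cite[Lemma~9.5]{JansonSpencer2007}, which shows that $\gLL(x)$ is approximated (away from $x=0$) by the $N(2\gl,2\gl^{-1})$ density as $\gl\to+\infty$, and reads off $\int x^k\gLL(x)\dd x\sim(2\gl)^k$ from pointwise estimates on the integrand, sidestepping uniform integrability entirely. Your excursion-length decomposition is natural, but the obstacle you flag is real and your proposed fix via Theorem~\ref{thm:scaling}\ref{thm:scaling-i} does not close it: tracking the $\gl$-dependence through \eqref{eq:S+} and the tree--graph inequality~\eqref{eq:treegraph} gives only $f_{k+1}(\gl)=O(\gl^{4k-2})$, which is far too weak to bound $\E\bigl(\xi_1(\gl)/(2\gl)\bigr)^{k+1}$ uniformly in~$\gl$. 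The alternative via~\eqref{C1:supcr:tail} is likewise incomplete, since that bound is stated only for $\delta\le1/2$ and so does not control $\Pr(\xi_1>s)$ for $s\gg\gl$. In effect you need an a~priori bound $f_{k+1}(\gl)=O(\gl^{k+1})$, which is essentially the upper half of what you are proving; the cleanest way to break this circularity is to estimate the deterministic intensity $\gLL$ directly, as the paper does.
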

Here, $(2k-5)!!$ is the usual semifactorial, 
i.e., $(2k-5)!!=\prod_{j=1}^{k-2}(2j-1)$,
with $(-1)!!=1$. 
In particular, for $f=f_2$, we have
$f(\gl)\sim|\gl|^{-1}$ as $\gl\to-\infty$ and
$f(\gl)\sim 4\gl^{2}$ as $\gl\to+\infty$,
as said in the introduction.

\begin{proof}
For $\gl<0$, we use results from \cite{JansonLuczak2008}. The
parametrization there is slightly different, so we
define, given $\gl<0$, first $p:=1/n+\gl n^{-4/3}$ and then
$t:=-\log(1-p)=p+O(p^2)=n^{-1}+(\gl+o_n(1))n^{-4/3}$,
where $o_n(1)$ denotes a quantity that tends to 0 as $n\tooo$ for fixed
$\gl$.
Note that (for large $n$) $t<1/ n$ and 
$1-nt=(|\gl|+o_n(1))n^{-1/3}$. 
By \cite[Theorem 3.4]{JansonLuczak2008}, there exists a polynomial $p_k$ of
degree $2k-3$ such that 
\begin{equation}\label{pia}
  \begin{split}
	  \E_{n,p} \sum_i|C_i|^k 
& =n p_k\Bigpar{\frac{1}{1-nt}}\Bigpar{1+O\Bigpar{\frac{1}{n(1-nt)^3}}} \\
& =n p_k\bigpar{(|\gl|^{-1}+o_n(1))n^{1/3}}\bigpar{1+O\bigpar{(|\gl|+o_n(1))^{-3}}}.
  \end{split}
\end{equation}
Letting $a_k$ be the leading coefficient of $p_k$, so $p_k(x)\sim
a_kx^{2k-3}$ as $x\tooo$, we obtain by letting $n\tooo$ in \eqref{pia} 
and using
\eqref{eq:conv:exp} and \eqref{eq:conv:sum:sq},
\begin{align}
f_k(\gl)=\lim_{n\tooo} \Bigpar{n^{-2k/3} \E_{n,p} \sum_i|C_i|^k }
=a_k|\gl|^{-(2k-3)}\bigpar{1+O\bigpar{|\gl|^{-3}}}.
\end{align}
Note that this estimate holds uniformly in all $\gl<0$.
Finally, we note that $a_k=(2k-5)!!$, as remarked in 
\cite[after  (7.8)]{JansonLuczak2008}, and \eqref{foo-} follows.

For $\gl>0$, we use results from \cite{JansonSpencer2007}.
The idea is that as $\gl\to+\infty$, we approach the supercritical regime,
where there is a single giant component $C_1$ that dominates the sum
$\sum_i|C_i|^k$, and that $|C_1|\approx 2\gl$.

It is shown in \cite[Lemma 9.5]{JansonSpencer2007} that as $\gl\to+\infty$,
the intensity $\gLL(x)$ is well approximated by the density function of the
normal distribution $N(2\gl,2\gl^{-1})$ (except for small $x$), and 
\eqref{foo+} follows easily by \eqref{def:fk} and estimates as in the proof
of \cite[Lemma 9.5]{JansonSpencer2007}; we omit the details.
\end{proof}

For $\gl\to-\infty$, we can combine Theorem \ref{Tappa} 
with~\eqref{eq:fk:der} and obtain 
$\frac{\dx}{\dx\gl}f(\gl)=|\gl|^{-2}\bigpar{1+O(|\gl|^{-3})}$, and
similarly for larger $k$. This extends by induction to higher derivatives; 
the result shows that we can formally take any number of derivatives in
\eqref{foo-} (keeping the multiplicative error term $O(|\gl|^{-3})$).  

For $\gl\to+\infty$, the estimates 
for $\gLL(x)$ used in \cite{JansonSpencer2007} are not
precise enough to yield as precise results for derivatives 
(note that in~\eqref{eq:fk:der} we expect the leading terms 
of $f_{k+2}(\lambda)/2 \sim (2\lambda)^{k+2}/2$ and 
$\lambda f_{k+1} \sim \lambda (2\lambda)^{k+1}$ to cancel);
we conjecture that here too we can take derivatives formally 
in~\eqref{foo+}, but we have not tried to prove it. 
(This would require more precise estimates of $\gLL(x)$ and thus 
by \eqref{def:Lambda} and \eqref{def:wl} of
$\sum_{\ell\ge0} w_\ell x^{3\ell/2} =\E \exp\bigpar{x^{3/2}\Bex}$.
Such estimates can possibly be derived from the asymptotic expansions 
for the distribution of $\Bex$  in \cite{SJ203},
but we leave this as an open problem.)
We note only that~\eqref{main:der} and \eqref{rough:upper} imply 
$\frac{\dx}{\dx\gl}\log f(\gl)=O(\min\{|\gl|^{-1},1\})$ for all $\gl \in \RR$.

\section{Simple bounds for the susceptibility}\label{sec:susc}
In this appendix we give complete proofs of \eqref{eq:S-}--\eqref{eq:S+}, 
which we restate as the theorem below.
(The bounds are sharp up to constant factors when 
$np=1 \mp \eps$, $\eps=O(1)$ and $\eps^3 n \ge 1$.) 
\begin{theorem}\label{thm:susc'}
  \begin{thmenumerate}
  \item \label{ts'<}
For all $n \ge 1$, $p \in [0,1]$, and $\eps > 0$ satisfying $np \le 1-\eps$, 
\begin{equation}\label{eq:susc:subcr'}
\E_{n,p} |C(v)| \le \eps^{-1} .
\end{equation}
\item \label{ts'>}
There is a constant $D>0$ such that, for all $n \ge 1$, $p \in [0,1]$, 
and $\eps \ge 0$ satisfying $np \le 1+\eps$, 
\begin{equation}\label{eq:susc:supcr'}
\E_{n,p} |C(v)| \le D \max\set{\eps^2 n, n^{1/3}} .
\end{equation}
  \end{thmenumerate}
\end{theorem}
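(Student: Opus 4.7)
For part (i), I would use the standard branching-process domination. Exploring $C(v)$ in breadth-first order, the number of newly activated vertices revealed at each step is stochastically at most $\Bin(n-1,p)$, so $|C(v)|$ is dominated by the total progeny $T$ of a Galton--Watson tree with $\Bin(n-1,p)$ offspring. Since $(n-1)p\le np\le 1-\eps$, this tree is subcritical with $\E T = 1/(1-(n-1)p)\le 1/\eps$, proving \eqref{eq:susc:subcr'}.

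For part (ii) the plan is to reduce to part (i) via the duality/symmetry rule after peeling off the giant component. Since $\E_{n,p}|C(v)| = S_n(p)/n$ is monotone in $p$, I may assume $np=1+\eps$; and for $\eps$ exceeding any fixed constant $A$ the trivial bound $\E_{n,p}|C(v)|\le n$ already implies \eqref{eq:susc:supcr'}, so assume $\eps\le A$. By symmetry, decompose $\E_{n,p}|C(v)| = \E|C_1|^2/n + \E\bigl[\sum_{i\ge 2}|C_i|^2\bigr]/n$. The first term can be controlled via the tail estimate \eqref{C1:supcr:tail}, which forces $|C_1|$ to concentrate near $\rho(\eps)n = \Theta(\eps n)$ when $\eps^3 n\ge B$ for some large constant $B$, giving $\E|C_1|^2 = O(\eps^2 n^2)$. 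For the second, conditional on $C_1$ the induced graph on $V\setminus C_1$ is distributed as $G_{n-|C_1|,p}$ conditioned on the decreasing event that no component exceeds $|C_1|$, so Harris's inequality yields $\E\bigl[\sum_{i\ge 2}|C_i|^2 \bigm| C_1\bigr]\le S_{n-|C_1|}(p)$; and on the high-probability event $|C_1|\ge (1-\delta)\rho(\eps)n$ we have $(n-|C_1|)p\le 1-c\eps$ by \eqref{C1:supcr:alpha}, so part (i) gives $S_{n-|C_1|}(p) = O(n/\eps)$. Combining, $\E_{n,p}|C(v)| = O(\eps^2 n + 1/\eps) = O(\eps^2 n)$ whenever $\eps^3 n\ge 1$.

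The main obstacle is the \emph{barely supercritical} regime $\eps^3 n = O(1)$, where \eqref{eq:susc:supcr'} asserts a bound of order $n^{1/3}$ but the tail bound \eqref{C1:supcr:tail} becomes too weak (it requires $\delta^2\eps^3 n\ge B$ with $\delta\le 1/2$, so it does not rule out $|C_1|$ exceeding a constant multiple of $\rho(\eps)n$ with probability large enough to spoil the $n^{4/3}$ target when multiplied by $n^2$). My workaround would be to invoke monotonicity to pass to $p^\ast = (1+\eps^\ast)/n$ with $\eps^\ast := \max\{\eps, c_0 n^{-1/3}\}$, and then use a separate critical upper-tail estimate $\Pr(|C_1|\ge A n^{2/3})\le e^{-cA^3}$ of Nachmias--Peres type, which integrates to $\E|C_1|^2 = O(n^{4/3})$; combined with the duality/Harris bound of $O(n/\eps^\ast) = O(n^{4/3})$ for the non-giant contribution, this yields $\E_{n,p}|C(v)|\le \E_{n,p^\ast}|C(v)| = O(n^{1/3})$ as required.
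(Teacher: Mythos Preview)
Your argument for part~(i) is correct and essentially identical to the paper's.

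For part~(ii), however, there is a genuine gap in the case split. The tail estimate~\eqref{C1:supcr:tail} only controls deviations with $\delta\le 1/2$, so from it you can conclude at best
\[
\E|C_1|^2 \le \bigl(\tfrac32\rho(\eps)n\bigr)^2 + n^2\,\Pr\bigl(|C_1|>\tfrac32\rho(\eps)n\bigr)
\le O(\eps^2 n^2) + n^2 e^{-a\eps^3 n/4},
\]
and similarly the ``bad'' event $|C_1|<(1-\delta)\rho(\eps)n$ contributes $n^2 e^{-a\delta^2\eps^3 n}$ to $\E\sum_{i\ge2}|C_i|^2$. For these error terms to be $O(\eps^2 n^2)$ you need $e^{-c\eps^3 n}=O(\eps^2)$, i.e.\ $\eps^3 n\gtrsim\log(1/\eps)$, not merely $\eps^3 n\ge B$. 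Concretely, if $\eps=K n^{-1/3}$ for a fixed large constant~$K$ (so $\eps^3 n=K^3$), the error is a fixed constant times~$n^2$, while the target $\eps^2 n^2=K^2 n^{4/3}$ is much smaller. Your Nachmias--Peres workaround, as stated, is invoked only for $\eps^3 n=O(1)$ after monotonically increasing $\eps$ to $c_0 n^{-1/3}$, so the intermediate range (roughly $B\le\eps^3 n\le c\log n$) is not covered by either case. A fix would require an upper-tail bound on $|C_1|$ valid for \emph{all} deviations, not just $\delta\le1/2$; but that is essentially as hard as the original claim.

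The paper avoids this issue entirely by a different route: instead of splitting off the giant, it proves a uniform ball-boundary decay $\Gamma(r):=\max_{G\subseteq K_n}\max_v\Pr(\partial B_{G_p}(v,r)\neq\emptyset)\le C/r$ for $r\le\lceil\eps^{-1}\rceil$ via a Kozma--Nachmias style induction (tripling the radius each step), and then decomposes the two-point function as $\Pr(v\leftrightarrow w)=\Pr(\dist\le 2r)+\Pr(2r<\dist<\infty)$ with $r=\lceil\eps^{-1}\rceil$. The first part is bounded by $\E|B(v,2r)|\le\sum_{j\le2r}(1+\eps)^j=O(\eps^{-1})$ via branching-process domination; the second by $\Gamma(r)^2=O(\eps^2)$, summed over $w$ giving $O(\eps^2 n)$. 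This handles all $\eps\le1$ with $\eps^3 n\ge1$ uniformly, and the remaining case $\eps^3 n<1$ then follows by the same monotonicity trick you use.
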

Part \ref{ts'<} is easy and well-known, and included for completeness. 
For part \ref{ts'>}, we do not know any reference with a short proof;
the bound is proved in \cite{BCHSS1} as a special case of a more general
and involved result. We give here a more direct argument which adapts
recent ideas from percolation theory~\cite{KN2009,vdHN2012} 
to the simpler~$G_{n,p}$ case.

We start by recalling some well-known branching processes 
results (we include proofs for completeness). 
Let $\bp_{n,p}$ denote a Galton--Watson branching process with 
$\Bin(n,p)$ offspring distribution, starting with a single 
individual, and let $|\bp_{n,p}|$ be its total size. 
We define $\bp_{\lambda}$ and $|\bp_{\lambda}|$ analogously, 
with~$\mathrm{Bin}(n,p)$ replaced by~$\mathrm{Po}(\lambda)$. 
\begin{lemma}\label{lem:bp}
  \begin{thmenumerate}
  \item \label{bp:dom}
For all $n \ge 1$, $p \in [0,1]$ and $\lambda \ge -n\log(1-p)$, 
$|\bp_{\lambda}|$ stochastically dominates $|\bp_{n,p}|$.
  \item \label{bp:tail}  
There exists a constant $C>0$ such that, for all 
$\lambda \ge 0$ and $k \ge 1$, we have  
\begin{equation}\label{bp}
  \Pr(k\le |\bp_{\lambda}| \le\infty) 
\le C\bigpar{\max\{\lambda-1,0\}+k^{-1/2}}.
\end{equation} 
  \end{thmenumerate}
\end{lemma}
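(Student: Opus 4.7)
\textbf{Part \ref{bp:dom}.} The condition $\lambda \ge -n\log(1-p)$ rewrites as $e^{-\lambda/n} \le 1-p$, i.e.\ $\Pr(\text{Po}(\lambda/n) \ge 1) \ge p$, which (since $\text{Bernoulli}(p) \le 1$) gives stochastic domination of $\text{Bernoulli}(p)$ by $\text{Po}(\lambda/n)$. Summing $n$ independent copies on each side shows $\text{Po}(\lambda) \succeq \Bin(n,p)$. A standard generation-by-generation coupling of the two Galton--Watson trees, in which at each node $\Bin(n,p)$ offspring are produced as a sub-multiset of $\text{Po}(\lambda)$ offspring, then yields $|\bp_{n,p}| \le |\bp_\lambda|$ almost surely.

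\textbf{Part \ref{bp:tail}.} The plan is to split
\begin{equation*}
\Pr(k \le |\bp_\lambda| \le \infty) = \Pr(k \le |\bp_\lambda| < \infty) + \Pr(|\bp_\lambda| = \infty)
\end{equation*}
and bound each term. For the first term I use Borel's formula $\Pr(|\bp_\lambda| = j) = (\lambda j)^{j-1} e^{-\lambda j}/j!$ together with Stirling's inequality $j! \ge \sqrt{2\pi j}\,(j/e)^j$ to obtain
\begin{equation*}
\Pr(|\bp_\lambda| = j) \le \frac{(\lambda e^{1-\lambda})^j}{\lambda\sqrt{2\pi j^3}}.
\end{equation*}
In the subcritical case $\lambda \le 1$ we have $\lambda e^{1-\lambda} \le 1$, so summing over $j \ge k$ gives $\Pr(|\bp_\lambda| \ge k) \le C k^{-1/2}$, while the survival term vanishes.

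In the supercritical case $\lambda > 1$ I invoke Poisson self-duality: writing $q_\lambda \in (0,1)$ for the extinction probability (the unique solution in $(0,1)$ of $q = e^{-\lambda(1-q)}$), a short probability-generating-function computation shows that conditioned on $\{|\bp_\lambda| < \infty\}$ the process is distributed as $\bp_{\hat\lambda}$ with subcritical parameter $\hat\lambda := \lambda q_\lambda < 1$. Hence $\Pr(k \le |\bp_\lambda| < \infty) = q_\lambda \Pr(|\bp_{\hat\lambda}| \ge k) \le C k^{-1/2}$ by the subcritical estimate just derived. It remains to bound the survival probability $\eta_\lambda := 1 - q_\lambda$ by $C \max\{\lambda - 1,0\}$: for $\lambda \ge 2$ this is trivial as $\eta_\lambda \le 1 \le \lambda - 1$, and for $1 < \lambda \le 2$ I substitute $e^{-x} \ge 1 - x + x^2/2 - x^3/6$ into the fixed-point equation $1 - \eta_\lambda = e^{-\lambda \eta_\lambda}$ and rearrange to $(\lambda^2 \eta_\lambda/2)(1 - \lambda \eta_\lambda/3) \le \lambda - 1$; since $\lambda\eta_\lambda/3 \le 2/3$ in this range, this yields $\eta_\lambda \le 6(\lambda - 1)$. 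Combining gives~\eqref{bp}.

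The only nontrivial step is setting up the Poisson self-duality cleanly: one must verify that the conditional offspring PGF $e^{\lambda(sq_\lambda - 1)}/q_\lambda$ equals the Poisson PGF $e^{\hat\lambda(s-1)}$ with $\hat\lambda = \lambda q_\lambda$ (which follows from $q_\lambda = e^{-\lambda(1-q_\lambda)}$), and that $\hat\lambda < 1$, which in turn follows from the standard fact that the extinction probability is the attracting fixed point of the offspring PGF, so its derivative $\lambda e^{-\lambda(1-q_\lambda)} = \lambda q_\lambda$ is strictly less than $1$.
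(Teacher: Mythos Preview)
Your argument is essentially correct, but there is one small gap, and in the supercritical case you take a longer route than the paper.

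\textbf{The gap.} From the Borel/Otter--Dwass formula and Stirling you correctly obtain
\[
\Pr(|\bp_\lambda| = j) \le \frac{(\lambda e^{1-\lambda})^j}{\lambda\sqrt{2\pi}\,j^{3/2}},
\]
and then assert that for $\lambda\le1$ ``summing over $j\ge k$ gives $\Pr(|\bp_\lambda|\ge k)\le Ck^{-1/2}$''. But using only $\lambda e^{1-\lambda}\le1$ leaves the prefactor $1/\lambda$, which blows up as $\lambda\to0$; the constant $C$ in \eqref{bp} must be uniform over all $\lambda\ge0$. The fix is immediate: since $0<\lambda e^{1-\lambda}\le1$ and $j\ge1$, we have $(\lambda e^{1-\lambda})^j\le \lambda e^{1-\lambda}\le e\lambda$, so the $\lambda$'s cancel and the bound $\Pr(|\bp_\lambda|=j)\le e/(\sqrt{2\pi}\,j^{3/2})$ holds uniformly in $\lambda>0$. (This also matters for your duality step, because $\hat\lambda=\lambda q_\lambda\to0$ as $\lambda\to\infty$.)

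\textbf{Comparison with the paper.} Once you observe that the pointwise bound $e/(\sqrt{2\pi}\,j^{3/2})$ is valid for \emph{every} $\lambda>0$, the duality machinery becomes unnecessary: summing gives $\Pr(k\le|\bp_\lambda|<\infty)\le Ck^{-1/2}$ directly for all $\lambda$, and only the survival probability remains. The paper proceeds exactly this way, and for the survival bound uses the slicker inequality $\lambda\rho=-\log(1-\rho)\ge\rho+\rho^2/2$, giving $\rho\le2(\lambda-1)$ in one line. Your duality argument (the PGF computation $e^{\lambda(sq_\lambda-1)}/q_\lambda=e^{\lambda q_\lambda(s-1)}$ and the observation $\hat\lambda<1$) is correct and is a nice structural fact, but here it is extra work that the uniform pointwise bound makes redundant; your cubic Taylor expansion for $\eta_\lambda$ is also correct but yields the constant $6$ rather than $2$.
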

\begin{proof}
\pfitemref{bp:dom} 
To prove that $|\bp_{\lambda}|$ stochastically dominates 
$|\bp_{n,p}|$, it suffices to show that $\mathrm{Po}(-n\log(1-p))$
stochastically dominates $\mathrm{Bin}(n,p)$. Taking $n$ 
independent couplings, it thus is enough to prove that 
$X \sim \mathrm{Po}(-\log(1-p))$ stochastically dominates
$Y \sim \mathrm{Bin}(1,p)$. This is immediate since 
$\Pr(X=0)=1-p = \Pr(Y =0)$, $\Pr(X \ge 1) = p = \Pr(Y=1)$ 
and $\Pr(Y \ge 2)=0$.

\pfitemref{bp:tail} 
Let $(\xi_i)_{i \ge 1}$ be a sequence of independent random variables 
with $\mathrm{Po}(\lambda)$ distribution. For all $k \ge 1$, using the 
classical Otter--Dwass formula~\cite{D1969} 
and Stirling's formula ($k! \ge \sqrt{2 \pi k} (k/e)^k$) we infer 
\begin{equation*}
\begin{split}
\Pr(|\bp_{\lambda}| =k) = \frac{\Pr(\xi_1 + \cdots + \xi_k = k-1)}{k} = \frac{\Pr(\mathrm{Po}(k\lambda) = k-1)}{k} = \frac{e^{-\lambda k}}{k \lambda} \frac{(\lambda k)^k}{k!} \le \frac{(\lambda e^{1-\lambda})^k}{\sqrt{2 \pi} k^{3/2}\lambda} \le \frac{e}{\sqrt{2\pi} k^{3/2}} , 
\end{split}
\end{equation*}
where  the last inequality follows by noting 
$\lambda e^{1-\lambda} \le 1$ 
and thus $\xpar{\lambda e^{1-\lambda}}^k \le \gl e^{1-\gl}\le e\gl$. 
Summing this inequality, we see that there 
is a constant $C$ such that 
\begin{equation}
  \label{bp1}
\Pr(k\le |\bp_{\lambda}| <\infty) \le C k^{-1/2}.
\end{equation}

It is easy to see that $\rho := \Pr(|\bp_{\lambda}| =\infty)$ satisfies 
$1-\rho = \E (1-\rho)^{\mathrm{Po}(\lambda)} = e^{-\lambda \rho}$. Using 
Taylor series we infer $\lambda\rho = -\log(1-\rho) \ge \rho + \rho^2/2$, 
so that either $\rho=0$ or $0 < \rho \le 2(\lambda -1)$, 
which together with \eqref{bp1} completes the proof of \eqref{bp}. 
\end{proof}

Given a graph $H$, 
we write $C_H(v)$ for the component containing the vertex~$v$ in $H$, and  
$\dist_H(v,w)$ for the length of the shortest path 
between~$v$ and~$w$ in $H$ (setting $\dist_H(v,w)=\infty$ 
if there is no such path). 
Define $B_{H}(v,r) = \{w \in V(H) : \dist_H(v,w) \le r\}$ and 
$\partial B_H(v,r) = \{w \in V(H) : \dist_H(v,w) = r\}$. 
\begin{lemma}\label{lem:Gamma}
There is a constant $C >0$ such that, for all $n \ge 1$, $p \in [0,1]$, and 
$\eps >0$ satisfying $np \le 1+\eps$ and $\eps n \ge 1$, the following 
holds for all\/ $1 \le r \le \ceil{\eps^{-1}}$: 
\begin{equation}\label{Gamma}
\Gamma(r) := \max_{G \subseteq K_n} \max_{v \in V(G)} \Pr(\partial B_{G_{p}}(v,r) \neq \emptyset) \le C r^{-1} .
\end{equation}
\end{lemma}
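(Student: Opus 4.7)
The plan is to dominate the BFS exploration of the ball $B_{G_p}(v,r)$ by a Galton--Watson branching process and then to bound the probability that this branching process reaches generation~$r$. The subtle point is that for a near-critical Poisson branching process the \emph{height} tail decays like $1/r$, whereas the size tail provided by Lemma~\ref{lem:bp}\ref{bp:tail} would only yield $O(1/\sqrt r)$, which is too weak; we therefore genuinely need to work with the height, not the size.

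By monotonicity in the subgraph, the maximum in~\eqref{Gamma} is attained at $G=K_n$, so it suffices to bound $\Pr\bigl(\partial B_{G_{n,p}}(v,r)\neq\emptyset\bigr)$. Exploring $C(v)$ in $G_{n,p}$ by BFS and revealing the children of each already-explored vertex among the unexplored ones, the conditional number of children is $\Bin(|\text{unexplored}|,p)\preceq\Bin(n,p)$. A vertex-by-vertex coupling then shows that $|\partial B_{G_{n,p}}(v,r)|$ is stochastically dominated by the $r$-th generation size of $\bp_{n,p}$; combined with the offspring-level coupling from the proof of Lemma~\ref{lem:bp}\ref{bp:dom}, this yields
\[
\Pr\bigl(\partial B_{G_{n,p}}(v,r)\neq\emptyset\bigr)\le \Pr\bigl(\mathrm{height}(\bp_\lambda)\ge r\bigr),\qquad \lambda:=-n\log(1-p).
\]
Under the hypotheses $np\le 1+\eps$ and $\eps n\ge 1$ we have $p=O(1/n)=O(\eps)$, and hence $\lambda=np+O(np^2)\le 1+C_1\eps$ for some absolute constant $C_1>0$.

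To bound $h_r(\mu):=\Pr(\mathrm{height}(\bp_\mu)\ge r)$, I would invoke Poisson duality: conditional on extinction, $\bp_\lambda$ with $\lambda>1$ is distributed as $\bp_{\lambda^*}$ where $\lambda^*:=\lambda(1-\rho)<1$ and $\rho$ is the survival probability (indeed $\lambda^*=\lambda s^*<1$ since $s^*$ is the smaller fixed point of the Poisson PGF). Together with monotonicity of $h_r$ in $\mu$, this gives
\[
h_r(\lambda)\le \rho(\lambda)+h_r(\lambda^*)\le \rho(\lambda)+h_r(1).
\]
The bound $\rho(\lambda)\le 2(\lambda-1)^+\le 2C_1\eps$ from the proof of Lemma~\ref{lem:bp}\ref{bp:tail}, together with $\eps r\le\eps\ceil{\eps^{-1}}\le 2$, yields $\rho(\lambda)=O(1/r)$; and for the critical height tail $h_r(1)$, the elementary inequality $1-e^{-x}\le x-x^2/4$ (valid for $x\in[0,1]$) applied to the recursion $h_r(1)=1-e^{-h_{r-1}(1)}$ gives $1/h_r(1)-1/h_{r-1}(1)\ge 1/4$, hence $h_r(1)\le 4/(r+4)$ by induction from $h_0(1)=1$. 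Combining these pieces yields $\Gamma(r)\le C/r$ for a suitable absolute constant~$C$.

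The hard part is the height-tail analysis: extracting the correct $1/r$ decay uniformly over the slightly supercritical range $\lambda\le 1+C_1\eps$. The duality trick splits the height tail into a survival contribution (of size $O(\eps)=O(1/r)$) and a critical dual contribution (handled by the convex-recursion argument), sidestepping the fact that a naive Taylor expansion of $h_r=1-e^{-\lambda h_{r-1}}$ near $\lambda=1+\eps$ can accumulate an error comparable to the main term over $r\approx\eps^{-1}$ generations.
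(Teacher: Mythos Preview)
Your argument is essentially correct and takes a genuinely different route from the paper's proof. One small slip: the claim that ``by monotonicity in the subgraph, the maximum in~\eqref{Gamma} is attained at $G=K_n$'' is false --- the event $\partial B_{G_p}(v,r)\neq\emptyset$ is \emph{not} monotone in $G$ (adding an edge can shorten all long paths; e.g.\ on a path of length~$r$ it holds, but fails once you add an edge from $v$ to the far endpoint). Fortunately this reduction is unnecessary: your BFS domination $|\partial B_{G_p}(v,r)|\preceq Z_r(\bp_{n,p})$ already works uniformly for every $G\subseteq K_n$ and every $v$, since each explored vertex has at most $n$ potential neighbours. With that correction the rest goes through: the level-preserving coupling, the bound $\lambda\le 1+C_1\eps$ (after disposing of large $\eps$ trivially), the duality split $h_r(\lambda)\le\rho+h_r(\lambda^*)\le\rho+h_r(1)$, and the recursion $1/h_r(1)\ge 1/h_{r-1}(1)+1/4$ are all fine.

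By contrast, the paper never touches the branching-process \emph{height}. It uses only the size-tail input $\Pr(|C_{G_p}(v)|\ge K)=O(K^{-1/2})$ from Lemma~\ref{lem:bp}\ref{bp:tail}, and upgrades this to the $O(r^{-1})$ bound via a Kozma--Nachmias style dyadic (here triadic) bootstrap: assuming $\Gamma(3^{k-1})\le D3^{-(k-1)}$, one argues that if level $3^k$ is reached then either the ball is large (handled by the size tail), or some intermediate level is thin, in which case a conditioning-and-restart argument gives a factor $\Gamma(3^{k-1})^2$. Your approach is more direct and more elementary for the specific $K_n$ setting, exploiting the exact Poisson duality that is unavailable for general graphs; the paper's bootstrap is more robust in that it needs only a size-tail bound, which is why it transplants from the Kozma--Nachmias high-dimensional percolation setting.
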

\begin{proof}
Assuming $C \ge 9$, note that for $\eps \ge 1/9$ inequality~\eqref{Gamma} 
holds trivially for all\/ $1 \le r \le \ceil{\eps^{-1}}$. 
It thus suffices to consider the case $\eps \le 1/9$. 
Let $\gl:=-n\log(1-p)$.
Note that $p \le (1+\eps)/n \le 2\eps < 1/2$ 
and thus $\gl\le np(1+p) \le 1+4\eps$. 
Let $k_0 \ge 1$ satisfy $3^{k_0-1} \le \ceil{\eps^{-1}} < 3^{k_0}$. 

It is well-known and easy to see that 
for any subgraph $G \subseteq K_n$,
$|C_{G_p}(v)|\le |C_{G_{n,p}}(v)|$
is stochastically dominated by
$|\bp_{n,p}|$.
By Lemma~\ref{lem:bp} it follows that there exists a constant $B>0$ 
such that, for all $1 \le K \le 9^{k_0}$, 
\begin{equation}\label{Gamma:dom}
\max_{G \subseteq K_n} \max_{v \in V(G)} \Pr(|C_{G_p}(v)| \ge K) 
\le  \Pr(|\bp_{n,p}| \ge K) 
\le  \Pr(|\bp_{\gl}| \ge K) 
\le \Pr(|\bp_{1+4\eps}| \ge K) \le B K^{-1/2} .
\end{equation}

Mimicking~\cite[Section~3.2]{KN2009}, we now show by induction 
on~$k \in \NN$ that $D:=(3^{3} + B)^3$ satisfies 
\begin{equation}\label{gamma:ind}
\Gamma(3^k) \le D 3^{-k} \qquad \text{for all $0 \le k \le k_0$,} 
\end{equation} 
which readily implies~\eqref{Gamma} with $C=3D$ 
(for any $1 \le r \le \ceil{\eps^{-1}}$ there is $1 \le k \le k_0$ with 
$3^{k-1} \le r < 3^k$, so $\Gamma(r) \le \Gamma(3^{k-1}) \le 3D r^{-1}$ follows).
The base case $k=0$ holds trivially since since $\Gamma(1) \le 1 \le D$. 

For the induction step, let $1 \le k \le k_0$
and assume that \eqref{gamma:ind} holds for $k-1$. 
Fix $G \subseteq K_n$ and $v \in V(G)$.
Set $\delta := D^{-4/3} \le 1$.
Then, by \eqref{Gamma:dom} we see that  
\begin{equation}\label{gamma:ind:case}
\Pr\bigpar{\partial B_{G_{p}}(v,3^{k}} \neq \emptyset) 
\le \Pr\bigpar{\partial B_{G_{p}}(v,3^{k}) \neq \emptyset \text { and } 
|C_{G_p}(v)| < \delta 9^k} + B \delta^{-1/2} 3^{-k} .
\end{equation}
By the pigeonhole principle, 
if $\partial B_{G_{p}}(v,3^{k}) \neq \emptyset$ 
and $|C_{G_p}(v)| < \delta 9^k$, then at least one level $j$ with
$3^{k-1} \le j \le 2\cdot 3^{k-1}$ satisfies 
$0 < |\partial B_{G_{p}}(v,j)| \le \delta 3^{k+1}$; 
let~$J$ denote the smallest such level. 
(If no such $j$ exists, let $J:=\infty$.) 
Note that, 
for any given non-empty
sets of vertices $W\subseteq[n]$ and $\partial W\subseteq W$,
using a breadth-first-search neighbourhood 
exploration algorithm, we can determine 
whether $B_{G_{p}}(v,J)=W$ and $\partial B_{G_{p}}(v,J)=\partial W$ 
by testing the status (in~$G_p$) only of edges 
with at least one endpoint in $W \setminus \partial W$.
Furthermore, if this event holds, then this determines $J$ and $J<\infty$. 
Consequently,
if $H$ is the induced subgraph of $G$ with vertex set 
$[n]\setminus(W \setminus \partial W)$, then
after conditioning on $B_{G_{p}}(v,J)=W$ and
$\partial B_{G_{p}}(v,J)=\partial W$,
the remaining random graph 
$G_p \cap H$ has the same distribution as the \emph{unconditional} random graph~$H_p$. 
Furthermore, by construction, the shortest path in $G_p$ 
from~$\partial W$ to $\partial B_{G_{p}}(v,3^{k})$ 
contains only edges in $H$, so $\partial B_{G_{p}}(v,3^k) \neq \emptyset$ 
implies the existence of a vertex~$w \in \partial W$ with $\partial B_{G_p \cap H}(w,3^{k}-J) \neq \emptyset$. 
Combining $|\partial B_{G_{p}}(v,J)| \le \delta 3^{k+1}$ and $3^{k}-J \ge 3^{k-1}$ 
with $H \subseteq G \subseteq K_n$ and $\partial W \subseteq V(H)$, it
follows that 
\begin{equation}\label{gamma:ind:cond1}
\begin{split}
\Pr\bigpar{\partial B_{G_{p}}(v,3^{k}) \neq \emptyset \mid B_{G_{p}}(v,J)=W, \: \partial B_{G_{p}}(v,J)=\partial W} & 
\le \sum_{w \in \partial W} 
\Pr\bigpar{\partial B_{H_p}(w,3^{k-1}) \neq \emptyset} \\
& \le \delta 3^{k+1} \cdot \Gamma(3^{k-1}) .
\end{split}
\end{equation}
Since the bound in \eqref{gamma:ind:cond1} does not depend on $W$ and
$\partial W$, it follows that
\begin{equation*}\label{cond:J}
\Pr\bigpar{\partial B_{G_{p}}(v,3^{k}) \neq \emptyset\mid J<\infty}  
\le \delta 3^{k+1} \Gamma(3^{k-1}) .
\end{equation*}
Consequently, using the induction hypothesis (and recalling that $J \ge 3^{k-1}$), 
\begin{equation}\label{gamma:ind:cond2}
\begin{split}
\Pr\bigpar{\partial B_{G_{p}}(v,3^{k}) \neq \emptyset \text { and }
  |C_{G_p}(v)| < \delta 9^k} 
& \le 
\Pr\bigpar{\partial B_{G_{p}}(v,3^{k}) \neq \emptyset \text { and }
J<\infty}
\\& 
\le \delta 3^{k+1} \Gamma(3^{k-1}) \cdot \Pr(J<\infty)
\\& 
\le
\delta 3^{k+1} \Gamma(3^{k-1}) \cdot 
\Pr\bigpar{\partial B_{G_{p}}(v,3^{k-1}) \neq \emptyset} 
\\& 
\le
\delta 3^{k+1} \Gamma(3^{k-1})^2 
 \le \delta 3^{k+1} (D 3^{1-k})^2 .
\end{split}
\end{equation}
After inserting~\eqref{gamma:ind:cond2} into~\eqref{gamma:ind:case}, 
by recalling $\delta = D^{-4/3}$ and $D=(3^{3} + B)^3$ we infer 
\[
\Pr\bigpar{\partial B_{G_{p}}(v,3^{k}) \neq \emptyset} 
\le \bigpar{3^{3}\delta D^2 + B \delta^{-1/2}} 3^{-k} 
= (3^{3} + B) D^{2/3} 3^{-k} = D 3^{-k} ,
\]
completing the proof of the induction step 
(since $G \subseteq K_n$ and $v \in V(G)$ were arbitrary). 
\end{proof}
\begin{proof}[Proof of Theorem~\ref{thm:susc'}]
\pfitemref{ts'<}
Since~$|\bp_{n,p}|$ stochastically dominates~$|C(v)|$, 
using $np \le 1-\eps$ we infer 
\[
\E |C(v)| \le \E |\bp_{n,p}| = \sum_{j \ge 0} (np)^j = (1-np)^{-1} \le \eps^{-1} .
\]
\pfitemref{ts'>}
Suppose first that $\eps \le1$ and $\eps^3 n \ge 1$ (the upper bound 
$\eps \le 1$ conveniently ensures $\eps \ceil{\eps^{-1}} \le2$).
We set $r:=\ceil{\eps^{-1}}$, and proceed by a case distinction similar 
to~\cite[Lemma~2.3]{vdHN2012}. Observe that 
\begin{equation}
  \label{cel}
\Enp |C(v)| = \sum_{w \in [n]} \Pr_{n,p}(w \in C(v)) 
= \sum_{w \in [n]} \bigl[\Pr(\dist_{G_{n,p}}(v,w) \le 2r) + \Pr(2r < \dist_{G_{n,p}}(v,w) < \infty)\bigr] .
\end{equation}
Since $|B_{\bp_{n,p}}(v,r)|$ stochastically dominates 
$|B_{G_{n,p}}(v,r)|$, using $np \le 1+\eps$ and $\eps \le 1$ we deduce 
\begin{equation}
\begin{split}
  \sum_{w \in [n]} \Pr\bigpar{\dist_{G_{n,p}}(v,w) \le 2r} 
& = \E |B_{G_{n,p}}(v,2r)| \le \E |B_{\bp_{n,p}}(v,2r)| \\
& \le \sum_{0 \le j \le  2r} (1+\eps)^j 
\le \eps^{-1} (1+\eps)^{2r+1} 
\le \eps^{-1} e^{5}
.
\end{split}
\end{equation}
Note that $2r < \dist_{G_{n,p}}(v,w) < \infty$ implies 
$B_{G_{n,p}}(v,r) \cap B_{G_{n,p}}(w,r) = \emptyset$ and 
$\partial B_{G_{n,p}}(v,r),\,\partial B_{G_{n,p}}(w,r) \neq \emptyset$. 
By conditioning on $B_{G_{n,p}}(v,r)$, and letting $H:=G_{n,p}\setminus
B_{G_{n,p}}(v,r)$, it follows that, similarly to
\eqref{gamma:ind:cond1},
\begin{equation*}\label{cel1}
\Pr\bigpar{2r < \dist_{G_{n,p}}(v,w) < \infty\mid
 B_{G_{n,p}}(v,r) }
\le
\Pr(\partial B_{H}(w,r) \neq \emptyset) 
\indic{\partial B_{G_{n,p}}(v,r)\neq\emptyset}
\le \Gamma(r) \indic{\partial B_{G_{n,p}}(v,r)\neq\emptyset}
 \end{equation*}
and consequently by taking the expectation and
using Lemma~\ref{lem:Gamma},
\begin{equation}\label{cel2}
\Pr\bigpar{2r < \dist_{G_{n,p}}(v,w) < \infty}
\le \Gamma(r) \cdot \Pr\bigpar{\partial B_{G_{n,p}}(v,r)\neq\emptyset}
\le \Gamma(r)^2
\le (C r^{-1})^2 
\le C^2\eps^2.
 \end{equation}
By \eqref{cel}--\eqref{cel2},
and $\eps^3 n \ge 1$, there thus is a constant $D=D(C)$ such that  
\begin{equation}\label{eq:susc:supcr:eps3n}
\E_{n,p} |C(v)|  \le e^{5}\eps^{-1} + C^2 \eps^2 n \le D \eps^2n.
\end{equation}

This proves \eqref{eq:susc:supcr'} when $\eps \le 1$ and $\eps^3n\ge1$.
When $\eps \ge 1$, 
the bound in
\eqref{eq:susc:supcr'} holds 
trivially 
 (since $|C(v)| \le n$),
assuming as we may $D \ge 1$.

In the remaining case  $\eps^3 n < 1$,
we observe that $np \le 1+\eps \le 1+n^{-1/3}$. 
Hence~\eqref{eq:susc:supcr:eps3n} with $\eps:=n^{-1/3}$ implies 
$\E_{n,p} |C(v)| \le D n^{1/3}$, 
completing the proof of \eqref{eq:susc:supcr'}. 
\end{proof}

\section{Higher derivatives of the susceptibility} 
\label{AppAhigher}
In this appendix we extend the method of proof from
Section~\ref{sec:convergence} 
to higher derivatives,
using arguments from 
\cite{JansonLuczak2008}.
The key fact is that, extending~\eqref{SnCi:conv:der},  
any mixed moment of~$X_{n,k}$ defined in~\eqref{def:Xnkq}, $k\ge2$, has a
derivative that can be expressed as a linear combination of such moments,
and thus by induction the same holds for higher derivatives as well.
We illustrate the general method by some  examples, leaving the details
in the general case to the reader. 
For notational convenience, we write 
\begin{equation*}
\DD:=n^{-4/3}(1-p)\frac{\dx}{\dx p} .  
\end{equation*}
Note that $\DD=n^{-4/3}\frac{\dx}{\dx t}$ for the parametrization 
$p=1-e^{-t}$ used in \cite{JansonLuczak2008}.
Note also that the factor $1-p$, which is needed in the exact formulas below,
disappear asymptotically, since $p=o(1)$, and that 
apart from this factor, $\DD=\frac{\dx}{\dx\lambda}$ for our usual
parametrization $p=n^{-1}+\lambda n^{-4/3}$.

First, consider $\DD\bigpar{\Enp (X_{n,k})}$ for an arbitrary $k\ge2$.
As noted in~\cite[(3.1)]{JansonLuczak2008}, if~$v\notarrow w$, then 
adding the edge~$vw$ to the graph increases $\sum_i|C_i|^k$ by 
\begin{equation}\label{perdita}
\Delta_{vw}\Bigpar{ \sum_i|C_i|^k}
:=(|C(v)|+|C(w)|)^k-|C(v)|^k-|C(w)|^k
=\sum_{\ell=1}^{k-1}\binom{k}{\ell}|C(v)|^\ell|C(w)|^{k-\ell} .
\end{equation}
(And, trivially, the change
$\Delta_{vw}(\sum_i|C_i|^k) = 0$ if~$v\leftrightarrow w$.) 
Recalling $X_{n,k} = n^{-2k/3}\sum_i|C_i|^k$, see~\eqref{def:Xnkq}, 
it follows by a modification of the argument leading 
to~\eqref{eq:SQ:sum} that (similar to~\cite[Theorem~2.32]{Grimmett1999}), 
with a factor~$\frac12$ because each edge is counted twice, 
\begin{equation}\label{eq:Xnk:der}
  \begin{split}
	\DD\bigpar{\Enp(X_{n,k})}
&=\tfrac12 n^{-2(k+2)/3} \sum_{v,w \in [n]} \Enp\lrpar{
 \indic{vw\not\in G_{n,p}} \Delta_{vw}\Bigpar{ \sum_i|C_i|^k}}
\\
&=\tfrac12 n^{-2(k+2)/3} \Enp\lrpar{
 \sum_{\ell=1}^{k-1}\binom{k}{\ell}\sum_{i\neq j}|C_i|^{\ell+1}|C_j|^{k-\ell+1}}
\\
&=\tfrac12  
 \sum_{\ell=1}^{k-1}\binom{k}{\ell}\Enp\bigpar{X_{n,\ell+1}X_{n,k-\ell+1}}
-(2^{k-1}-1)\Enp(X_{n,k+2}).
  \end{split}
\end{equation}
Theorem \ref{thm:scaling}\ref{thm:scaling-ii} extends to mixed moments,
because the convergence \eqref{eq:conv:sum} holds jointly for different
$k\ge2$ (by the same proof), 
and the uniform moment bound \eqref{bound:sum:sq} extends to mixed moments
by \Holder's inequality.
 Thus we obtain from \eqref{eq:Xnk:der},
if $p =1/n + \bigl(\lambda+o(1)\bigr) n^{-4/3}$,
\begin{equation}\label{hermione}
  \begin{split}
	\DD\bigpar{\Enp(X_{n,k})}
\to
\tfrac12  
 \sum_{\ell=1}^{k-1}\binom{k}{\ell}\E\bigpar{W_{\gl,\ell+1}W_{\gl,k-\ell+1}}
-(2^{k-1}-1)\E(W_{\gl,k+2}).
  \end{split}
\end{equation}
The special case $k=2$ is given above in \eqref{SnCi:conv:der}.

For higher moments, we give for notational convenience
just one example of the method. 
Adding an edge~$vw$ with $v\notarrow w$ increases
$X_{n,2}=n^{-4/3}\sum_i|C_i|^2$
by $\Delta(X_{n,2})=2n^{-4/3}|C(v)||C(w)|$, see \eqref{perdita}, 
and thus increases~$X_{n,2}^2$ by
\begin{equation}
  \begin{split}
	\Delta\bigpar{X_{n,2}^2}
&=\bigpar{X_{n,2}+\Delta X_{n,2}}^2-X_{n,2}^2
=2X_{n,2}\Delta X_{n,2}+(\Delta X_{n,2})^2
\\&
=4n^{-4/3}X_{n,2}|C(v)||C(w)|
+4n^{-8/3}|C(v)|^2|C(w)|^2,
  \end{split}
\end{equation}
leading to
\begin{equation}
  \begin{split}
\DD\bigpar{\Enp\bigpar{X_{n,2}^2}}
&=\tfrac12\Enp \Bigpar{4n^{-8/3}X_{n,2}\sum_{i\neq j}|C_i|^2|C_j|^2
+4n^{-12/3}\sum_{i\neq j}|C_i|^3|C_j|^3}
\\&
=2\Enp \bigpar{X_{n,2}\bigpar{X_{n,2}^2-X_{n,4}}}
+2\Enp \bigpar{X_{n,3}^2-X_{n,6}}
\\&
=2\Enp\bigpar{X_{n,2}^3}-2\Enp\bigpar{X_{n,4}X_{n,2}}+2\Enp \bigpar{X_{n,3}^2}
-2\Enp\bigpar{X_{n,6}}.
  \end{split}
\end{equation}
This together with the cases $k=2$ and $k=4$ of \eqref{eq:Xnk:der} yield,
after simplifications,
\begin{equation}
  \begin{split}
(\DD)^2\Bigpar{\Enp\bigpar{X_{n,2}}}
&= \DD\Bigpar{\Enp\bigpar{X_{n,2}^2}-\Enp\bigpar{X_{n,4}}}
\\&
=2\Enp\bigpar{X_{n,2}^3}-6\Enp\bigpar{X_{n,4}X_{n,2}}-\Enp \bigpar{X_{n,3}^2}
+5\Enp\bigpar{X_{n,6}}
  \end{split}
\end{equation}
and thus we obtain, if 
$p =1/n + \bigl(\lambda+o(1)\bigr) n^{-4/3}$,
\begin{equation}
  \begin{split}
(\DD)^2\Bigpar{\Enp\bigpar{X_{n,2}}}
\to
2\E\bigpar{W_{\gl,2}^3}-6\E\bigpar{W_{\gl,4}W_{\gl,2}}-\E \bigpar{W_{\gl,3}^2}
+5\E\bigpar{W_{\gl,6}}.
  \end{split}
\end{equation}
The general case is similar. 
In particular, this leads to the following extension of Theorem \ref{thm:main2}.

\begin{theorem}\label{Tconv:higher}
Define the infinitely differentiable function $f=f_2: \RR \to (0,\infty)$ as
in~\eqref{def:fk}.
Given $\lambda \in \RR$, for $p =1/n + \bigl(\lambda+o(1)\bigr) n^{-4/3}$ we
have, as $n \to \infty$, for every fixed $m$,
\begin{align}
\label{main:der:f}
&& &&{n^{-(4m+1)/3}} \frac{\dx^m}{\dx p^m} \goodchi_{K_n}(p)
& \to \frac{\dx^m}{\dx \lambda^m} f(\lambda),
&& (m\ge0),&&&&
\\
\label{main:der:logf}
&&
&&{n^{-4m/3}} \frac{\dx^m}{\dx p^m} \log \goodchi_{K_n}(p)
& \to \frac{\dx^m}{\dx \lambda^m} \log f(\lambda)
&& (m\ge1).
\end{align}
Moreover, if $p =1/n + \lambda n^{-4/3}$, then the convergence is uniform for $\gl$ in any compact set~$[\lambda_1,\lambda_2] \subset \RR$.
\end{theorem}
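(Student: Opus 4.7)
The plan is to prove \eqref{main:der:f} by induction on $m$, with base cases $m \in \{0, 1\}$ given by Theorem~\ref{thm:main2}, and then to deduce \eqref{main:der:logf} via Fa\`a di Bruno's formula. The workhorse is the operator $\DD = n^{-4/3}(1-p)\frac{\dx}{\dx p}$ introduced in Appendix~\ref{AppAhigher}: under the parametrization $p = 1/n + \lambda n^{-4/3}$ one has $\DD = (1+o(1))\frac{\dx}{\dx \lambda}$, while the factor $1-p$ cleanly cancels the $(1-p)^{-1}$ that comes from the requirement in the Margulis--Russo formula that a pivotal edge be absent.

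The first (and main) step is a closure property generalizing \eqref{eq:Xnk:der}: for any monomial $Y = \prod_j X_{n,k_j}^{e_j}$ with each $k_j \ge 2$, one has $\DD(\Enp Y) = \sum_\alpha c_\alpha \Enp(Y_\alpha)$, a finite sum with $n$- and $p$-independent coefficients $c_\alpha$, in which each $Y_\alpha$ is again a monomial in the $X_{n,k}$'s with every index $\ge 2$. This follows from the same edge-merging identity \eqref{perdita} used in Appendix~\ref{AppAhigher}: adding an edge $vw$ with $v \notarrow w$ produces factors $|C(v)|^a|C(w)|^b$ with $a, b \ge 1$, and summing over ordered pairs $(v,w)$ yields terms of the form $\sum_{i \ne j}|C_i|^a|C_j|^b$ and $\sum_i |C_i|^{a+b}$, which after the $n^{-2k/3}$ rescaling become monomials in $(X_{n,k})_{k \ge 2}$. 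Iterating the closure starting from $\Enp X_{n,2} = n^{-1/3}\goodchi_{K_n}(p)$ shows that, for every $m \ge 0$, $\DD^m(\Enp X_{n,2})$ is a fixed polynomial in mixed moments of the $(X_{n,k})_{k \ge 2}$.

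Next, I extend Theorem~\ref{thm:scaling} to mixed moments. Joint convergence $(X_{n,k})_{k \ge 2} \dto (W_{\lambda,k})_{k \ge 2}$ follows by the same proof as Lemma~\ref{lem:conv} (the variables are all built from the same component sequence), and uniform bounds on arbitrary mixed moments follow from H\"older's inequality applied to \eqref{bound:sum:sq}, yielding uniform integrability. Hence each mixed moment converges to the corresponding moment of the $(W_{\lambda,k})_{k \ge 2}$, and by Lemma~\ref{Luc} this convergence is uniform for $\lambda$ in any compact subset of $\RR$ under the strict parametrization $p = 1/n + \lambda n^{-4/3}$. Combined with the closure property, this gives continuous $G_m: \RR \to \RR$ with $\DD^m(\Enp X_{n,2}) \to G_m(\lambda)$ uniformly on compact sets.

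It remains to identify $G_m = \frac{\dx^m}{\dx \lambda^m} f$. Assume inductively that $G_{m-1} = \frac{\dx^{m-1}}{\dx \lambda^{m-1}} f$. For any $\lambda_1 < \lambda_2$, the fundamental theorem of calculus combined with $1 - p = 1 + o(1)$ (uniform for $\lambda \in [\lambda_1,\lambda_2]$) gives
\begin{equation*}
\DD^{m-1}(\Enp X_{n,2})\Big|_{\lambda_2} - \DD^{m-1}(\Enp X_{n,2})\Big|_{\lambda_1} = \int_{\lambda_1}^{\lambda_2} (1+o(1))\,\DD^m(\Enp X_{n,2})\Big|_{\lambda}\, \dx \lambda,
\end{equation*}
and uniform convergence on $[\lambda_1, \lambda_2]$ lets us pass $n \to \infty$ to obtain $G_{m-1}(\lambda_2) - G_{m-1}(\lambda_1) = \int_{\lambda_1}^{\lambda_2} G_m(\lambda) \dx \lambda$; since $G_m$ is continuous and $\lambda_1, \lambda_2$ are arbitrary, $G'_{m-1} = G_m$, so the induction yields $G_m = \frac{\dx^m}{\dx \lambda^m} f$, proving \eqref{main:der:f}. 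Finally, \eqref{main:der:logf} follows from \eqref{main:der:f} by applying Fa\`a di Bruno's formula to $\log \circ \goodchi_{K_n}(p)$, using that $f > 0$ makes $\log f$ smooth and $\goodchi_{K_n}/n^{1/3}$ is bounded away from zero on compact sets. The main obstacle is the combinatorial bookkeeping in the closure property --- verifying that no $X_{n,1}$ terms ever appear (ensured because $a, b \ge 1$ forces the new indices to be $\ge 2$), that the powers of $n$ balance exactly against the $n^{-2k/3}$ normalization, and that the coefficients are uniformly bounded in $n$ and $p$.
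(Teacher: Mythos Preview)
Your proposal is correct and follows essentially the same route as the paper: closure of mixed moments of the $X_{n,k}$ under $\DD$, joint convergence plus H\"older to extend Theorem~\ref{thm:scaling} to mixed moments, then integration (uniform convergence on compacts plus the fundamental theorem of calculus) to identify the limit $G_m$ as $f^{(m)}$, and finally Fa\`a di Bruno for the logarithm --- exactly what the paper does (its ``expanding the derivatives of the logarithms'' is Fa\`a di Bruno in disguise).

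The one step you gloss over is the passage from $\DD^m(\Enp X_{n,2})\to f^{(m)}(\lambda)$ to \eqref{main:der:f}, which is a statement about $n^{-4m/3}\frac{\dx^m}{\dx p^m}(\Enp X_{n,2})=\frac{\dx^m}{\dx\lambda^m}(\Enp X_{n,2})$. Your remark that $\DD=(1+o(1))\frac{\dx}{\dx\lambda}$ does \emph{not} immediately give $\DD^m=(1+o(1))\frac{\dx^m}{\dx\lambda^m}$; iterating the operator produces lower-order derivative terms. The paper closes this via a short product-rule induction: writing $\DD^m=\sum_{j\le m}c_{j,m}(1-p)^j n^{-4m/3}\frac{\dx^j}{\dx p^j}$ with $c_{m,m}=1$, each $j<m$ term is $n^{-4(m-j)/3}$ times a quantity already known (inductively) to converge, hence $o(1)$. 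This is a routine two-line fix, and all the ingredients for it are already present in your argument.
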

\begin{proof}
For $p =1/n + \bigl(\lambda+o(1)\bigr) n^{-4/3}$,
the argument above shows that for every $m\ge0$,
\begin{equation}\label{eq:conv:Xn2:0}
(\DD)^m\Bigpar{\Enp\bigpar{X_{n,2}}}\to g_m(\gl)
\end{equation}
for some function~$g_m(\gl)$, with  
\begin{equation}\label{eq:gof}
g_0(\lambda)=f(\lambda).
\end{equation} 
Recalling the definition of~$\DD$, using~\eqref{eq:conv:Xn2:0}, the 
product rule, and induction, it is easy to see that for every $m \ge 0$
there are constants~$c_{j,m} \in \RR$ 
with $c_{m,m}=1$
such that 
\begin{equation}\label{eq:conv:Xn2:1}
(\DD)^m\Bigpar{\Enp\bigpar{X_{n,2}}} 
= \sum_{0 \le j \le m} c_{j,m} (1-p)^j n^{-4m/3} 
  \frac{\dx^j}{\dx p^j}\Bigpar{\Enp\bigpar{X_{n,2}}} .
\end{equation}
Combining~\eqref{eq:conv:Xn2:0} and~\eqref{eq:conv:Xn2:1} with $p=o(1)$, 
by another induction on $m \ge 0$ we now infer 
\begin{equation}\label{eq:conv:Xn2}
n^{-4m/3}\frac{\dx^m}{\dx p^m}\Bigpar{\Enp\bigpar{X_{n,2}}}\to g_m(\gl),
\end{equation}
since in~\eqref{eq:conv:Xn2:1} any summand with $j < m$ is~$O(n^{-4m/3} \cdot n^{4j/3})=o(1)$ by the induction hypothesis. 

We now change parametrization and define 
$h_n(\gl):=\Enp\bigpar{X_{n,2}}$ for $p =1/n + \lambda n^{-4/3}$, 
so that~\eqref{eq:conv:Xn2} translates into 
\begin{equation}\label{hg}
\frac{\dx^m}{\dx \gl^m}h_n(\gl) 
= n^{-4m/3}\frac{\dx^m}{\dx p^m} h_n(\gl) \to g_m(\gl).
\end{equation}
Lemma \ref{Luc} shows that $g_m(\gl)$ is continuous,
and that \eqref{hg} holds uniformly for $\gl$ in any
compact set.
Hence we can integrate, as in
\eqref{eq:scaling:diff:f}--\eqref{eq:scaling:diff:intg}, and obtain
$g_m(\gl_2)-g_m(\gl_1)=\int_{\gl_1}^{\gl_2} g_{m+1}(\gl)\dd\gl$
whenever $\gl_1<\gl_2$, and thus $g_{m+1}(\gl)=\frac{\dx}{\dx\gl}g_m(\gl)$.
By induction and~\eqref{eq:gof}, we infer, for every $m\ge0$,
\begin{equation}\label{eq:func:fm}
g_m(\gl)=\frac{\dx^m}{\dx \gl^m}g_0(\gl)=\frac{\dx^m}{\dx \gl^m}f(\gl).
\end{equation}
By \eqref{hg} and \eqref{eq:func:fm}, 
\begin{equation}\label{hgf1}
  \frac{\dx^m}{\dx \gl^m}h_n(\gl)\to 
g_m(\gl)=
 \frac{\dx^m}{\dx \gl^m}f(\gl),
\qquad m\ge0,
\end{equation}
and thus also, by expanding the derivatives of the logarithms on both sides
and applying \eqref{hgf1} to each term,
\begin{equation}\label{hgf2}
  \frac{\dx^m}{\dx \gl^m}\log h_n(\gl)\to 
 \frac{\dx^m}{\dx \gl^m}\log f(\gl),
\qquad m\ge1.
\end{equation}
Moreover, the convergence in \eqref{hgf1} and \eqref{hgf2} is uniform on any
compact set. 
Consequently, \eqref{hgf1}--\eqref{hgf2} hold also with $h_n(\gl_n)$ on the
left-hand side, for any sequence $\gl_n\to \gl$, see Remark \ref{Rluc}.
The result \eqref{main:der:f}--\eqref{main:der:logf} now follows 
for any $p =1/n + \bigl(\lambda+o(1)\bigr) n^{-4/3}$
by taking $\gl_n:=n^{4/3}(p-1/n)=\gl+o(1)$.
\end{proof}

\small
\begin{spacing}{0.9}

\end{spacing}
\normalsize


\begin{thebibliography}{10}

\bibitem{AizenmanNewman1984}
M.~Aizenman and C.~M.~Newman.
\newblock Tree graph inequalities and critical behavior in percolation models.
\newblock {\em J.\ Statist.\ Phys.} {\bf 36} (1984), 107--143.

\bibitem{AKS82}
M.~Ajtai, J.~Koml{\'o}s and E.~Szemer{\'e}di.
\newblock Largest random component of a {$k$}-cube.
\newblock {\em Combinatorica} {\bf 2} (1982), 1--7.

\bibitem{Aldous1997}
D.~Aldous.
\newblock Brownian excursions, critical random graphs and the multiplicative
  coalescent.
\newblock {\em Ann.\ Probab.} {\bf 25} (1997), 812--854.

\bibitem{Bollobas1984}
B.~Bollob{\'a}s.
\newblock The evolution of random graphs.
\newblock {\em Trans.\ Amer.\ Math.\ Soc.} {\bf 286} (1984), 257--274.

\bibitem{BKL92}
B.~Bollob{\'a}s, Y.~Kohayakawa and T.~{\L}uczak.
\newblock The evolution of random subgraphs of the cube.
\newblock {\em Rand.\ Struct.\ \&~Algor.} {\bf 3} (1992), 55--90.

\bibitem{BR2014}
B.~{Bollob{\'a}s} and O.~Riordan.
\newblock {Exploring hypergraphs with martingales.}
\newblock {\em Rand.\ Struct.\ \& Algor.}, to appear. 
\texttt{arXiv:1403.6558}

\bibitem{BCHSS1}
C.~Borgs, J.T. Chayes, R.~van~der~Hofstad, G.~Slade and J.~Spencer.
\newblock Random subgraphs of finite graphs. {I}. {T}he scaling window under
  the triangle condition.
\newblock {\em Rand.\ Struct.\ \& Algor.} {\bf 27} (2005), 137--184.

\bibitem{BCHSS2}
C.~Borgs, J.~T.~Chayes, R.~van~der~Hofstad, G.~Slade and J.~Spencer.
\newblock Random subgraphs of finite graphs. {II}. {T}he lace expansion and the
  triangle condition.
\newblock {\em Ann.\ Probab.} {\bf 33} (2005), 1886--1944.

\bibitem{BCHSS3}
C.~Borgs, J.~T.~Chayes, R.~van~der~Hofstad, G.~Slade and J.~Spencer.
\newblock Random subgraphs of finite graphs. {III}. {T}he phase transition for
  the {$n$}-cube.
\newblock {\em Combinatorica} {\bf 26} (2006), 395--410.

\bibitem{D1969}
M.~Dwass.
\newblock The total progeny in a branching process and a related random walk. 
\newblock {\em J.\ Appl.\ Probability} {\bf 6} (1969), 682--686. 

\bibitem{ER1960}
P.~Erd\H{o}s and A.~R\'enyi. 
\newblock On the evolution of random graphs. 
\newblock {\em Magyar Tud.\ Akad.\ Mat.\ Kutat\'o Int.\ K\"ozl} {\bf 5} (1960), 17--61.

\bibitem{ER1961con}
P.~Erd{\H{o}}s and A.~R{\'e}nyi.
\newblock On the strength of connectedness of a random graph.
\newblock {\em Acta Math.\ Acad.\ Sci.\ Hungar.} {\bf 12} (1961), 261--267.

\bibitem{Grimmett1999}
G.~Grimmett.
\newblock {\em Percolation}.
\newblock {S}econd edition, Springer, Berlin (1999).

\bibitem{Gut2005}
A.~Gut.
\newblock {\em Probability: a Graduate Course}.
\newblock Springer, New York (2005).

\bibitem{vdHL10}
R.~van~der~Hofstad and M.~J.~Luczak.
\newblock Random subgraphs of the 2{D} {H}amming graph: the supercritical
  phase.
\newblock {\em Probab.\ Theory \& Related Fields} {\bf 147} (2010), 1--41.

\bibitem{vdHN2012}
R.~van~der~Hofstad and A.~Nachmias.
\newblock {Hypercube percolation}.
\newblock  {\em J.\ Eur.\ Math.\ Soc.}, to appear. \texttt{arXiv:1201.3953}

\bibitem{vdHN2014}
R.~van~der Hofstad and A.~Nachmias.
\newblock Unlacing hypercube percolation: a survey.
\newblock {\em Metrika} {\bf 77} (2014), 23--50.

\bibitem{vdHS06}
R.~van~der~Hofstad and G.~Slade.
\newblock Expansion in {$n^{-1}$} for percolation critical values on the
  {$n$}-cube and {$\mathbb Z^n$}: the first three terms.
\newblock {\em Combin.\ Probab.\ Comput.} {\bf 15} (2006), 695--713. 

\bibitem{Janson2007}
S.~Janson.
\newblock Brownian excursion area, {W}right's constants in graph enumeration,
  and other {B}rownian areas.
\newblock {\em Probab.\ Surv.} (2007), 80--145.

\bibitem{SJ203}
S. Janson and G. Louchard.
Tail estimates for the Brownian excursion area and other Brownian areas.
\emph{Electronic J. Probab.} \textbf{12} (2007), no. 58, 1600--1632.

\bibitem{JLR}
S.~Janson, T.~{\L}uczak and A.~Ruci{\'n}ski.
\newblock {\em Random Graphs}.
\newblock Wiley-Interscience, New York (2000).

\bibitem{JansonLuczak2008}
S.~Janson and M.~J.~Luczak.
\newblock Susceptibility in subcritical random graphs.
\newblock {\em J. Math. Phys.} {\bf 49} (2008), 125207. 

\bibitem{JansonSpencer2007}
S.~Janson and J.~Spencer.
\newblock A point process describing the component sizes in the critical window
  of the random graph evolution.
\newblock {\em Combin.\ Probab.\ Comput.} {\bf 16} (2007), 631--658.

\bibitem{KN2009}
G.~Kozma and A.~Nachmias.
\newblock The {A}lexander--{O}rbach conjecture holds in high dimensions.
\newblock {\em Invent.\ Math.} {\bf 178} (2009), 635--654.

\bibitem{LNNP2014}
Y.~Long, A.~Nachmias, W.~Ning and Y.~Peres.
\newblock A power law of order {$1/4$} for critical mean field
  {S}wendsen--{W}ang dynamics.
\newblock {\em Mem.\ Amer.\ Math.\ Soc.} {\bf 232} (2014). 

\bibitem{Louchard1984}
G.~Louchard.
\newblock The {B}rownian excursion area: a numerical analysis.
\newblock {\em Comput.\ Math.\ Appl.} {\bf 10} (1984), 413--417.

\bibitem{Luczak1990}
T.~{\L}uczak.
\newblock Component behavior near the critical point of the random graph
  process.
\newblock {\em Rand.\ Struct.\ \& Algor.} {\bf 1} (1990), 287--310.

\bibitem{Margulis}
G.~A.~Margulis.
\newblock Probabilistic characteristics of graphs with large connectivity.
(Russian.)
\newblock {\em Problemy Pereda\v ci Informacii} {\bf 10} (1974), 101--108.

\bibitem{NP08}
A.~Nachmias and Y.~Peres.
\newblock Critical random graphs: diameter and mixing time.
\newblock {\em Ann.\ Probab.} {\bf 36} (2008), 1267--1286.

\bibitem{NIST}
\emph{NIST Handbook of Mathematical Functions}.
Eds.  F.~W.~J.~Olver, D.~W.~Lozier, R.~F.~Boisvert and C.~W.~Clark.
\newblock Cambridge University Press (2010).
\newblock Also available as \emph{NIST Digital Library of Mathematical
  Functions}, {http://dlmf.nist.gov/}.

\bibitem{Peres2012}
Y.~Peres.
\newblock Personal communication (2012).

\bibitem{RWapsubcr}
O.~Riordan and L.~Warnke.
\newblock The evolution of subcritical {A}chlioptas processes.
\newblock {\em Rand.\ Struct.\ \& Algor.} {\bf 47} (2015), 174--203.

\bibitem{RWapbsr}
O.~Riordan and L.~Warnke.
\newblock The phase transition in bounded-size Achlioptas processes.
\newblock In preparation.  

\bibitem{Russo}
L.~Russo.
\newblock On the critical percolation probabilities.
\newblock {\em Z.\ Wahrsch.\ Verw.\ Gebiete} {\bf 56} (1981), 229--237.

\bibitem{Spencer1997}
J.~Spencer.
\newblock Enumerating graphs and {B}rownian motion.
\newblock {\em Comm.\ Pure Appl.\ Math.} {\bf 50} (1997), 291--294.

\bibitem{Warnke2012}
L.~Warnke.
\newblock Percolation thoughts.
\newblock MSR-Internship Report (2012).

\bibitem{Wright1977}
E.~M.~Wright.
\newblock The number of connected sparsely edged graphs.
\newblock {\em J.\ Graph Theory} {\bf 1} (1977), 317--330. 
\end{thebibliography}
\end{document}